\newtheorem{theorem}{Theorem}[section]
\newtheorem{question}[theorem]{Question}
\newtheorem{proposition}[theorem]{Proposition}
\newtheorem{lemma}[theorem]{Lemma}
\newtheorem{fact}[theorem]{Fact}
\newtheorem{corollary}[theorem]{Corollary}
\newtheorem{conjecture}[theorem]{Conjecture}
\theoremstyle{definition}
\newtheorem{definition}[theorem]{Definition}
\newtheorem{example}[theorem]{Example}
\newtheorem{remark}[theorem]{Remark}
\numberwithin{equation}{section}
\newcommand{\R}{\mathbb R}
\newcommand{\N}{\mathbb N}
\newcommand{\Q}{\mathbb Q}
\newcommand{\Z}{\mathbb Z}
\newcommand{\Prm}{\mathbb P}
\def\B{\mathcal B}
\def\End{\mathrm{End}}
\def\Aut{\mathrm{Aut}}
\def\nub{\mathrm{nub}}
\def\U{\mathcal U}
\newcommand{\rank}{\mathrm{rank}}
\newcommand{\rk}{\mathrm{rk}}
\numberwithin{equation}{section}
\title[Finiteness of topological entropy for locally compact abelian groups]{Finiteness of topological entropy for \\ locally compact abelian groups}
\author[D. Dikranjan]{Dikran Dikranjan$^\sharp$}
\address{$^\sharp$ Dipartimento di Scienze Matematiche, Informatiche e Fisiche\endgraf
\ \ Universit\`a di Udine\endgraf
\ \ Via delle Scienze 206, 33100 Udine, Italy\endgraf 
\ \ Email: \texttt{dikran.dikranjan@uniud.it}}
\author[A. Giordano Bruno]{Anna Giordano Bruno$^\flat$}
\address{$^\flat$ Dipartimento di Scienze Matematiche, Informatiche e Fisiche\endgraf
\ \ Universit\`a di Udine\endgraf
\ \ Via delle Scienze 206, 33100 Udine, Italy\endgraf 
\ \ Email: \texttt{anna.giordanobruno@uniud.it}}
\author[F. G. Russo]{Francesco G. Russo$^\natural$}
\address{$^\natural$ Department of Mathematics and Applied Mathematics\endgraf 
\ \ University of Cape Town\endgraf
\ \ Private Bag X1, Rondebosch 7701, Cape Town, South Africa\endgraf
\ \ Email: \texttt{francescog.russo@yahoo.com}}
\date{}
\begin{document}

\keywords{Locally compact abelian groups; locally compact abelian $p$-groups; topological entropy; finite $p$-rank; Heisenberg groups.}

\subjclass[2010]{22A05, 37B40,  54C70.}

\maketitle

\begin{abstract}
We study the locally compact abelian groups in the class $\mathfrak E_{<\infty}$, that is, having only continuous endomorphisms of finite topological entropy, and in its subclass $\mathfrak E_0$, that is, having all continuous endomorphisms with vanishing topological entropy.  We discuss the reduction of the problem to the case of periodic locally compact abelian groups, and then to locally compact abelian $p$-groups.

We show that locally compact abelian $p$-groups of finite rank belong to $\mathfrak E_{<\infty}$, and that those of them that belong to $\mathfrak E_0$ are precisely the ones with discrete maximal divisible subgroup.
Furthermore, the topological entropy of endomorphisms of locally compact abelian $p$-groups of finite rank coincides with the logarithm of their scale.

The backbone of the paper is the Addition Theorem for continuous endomorphisms of locally compact abelian groups. 
Various versions of the  Addition Theorem are established in the paper and used in the proofs of the main results, but its validity in the general case remains an open problem. 
\end{abstract}


\section{Introduction}

The topological entropy for continuous self-maps of compact spaces was introduced by Adler, Konheim and McAndrew \cite{AKM}, in analogy with the metric entropy from ergodic theory introduced by Kolmogorov and Sinai (see \cite{Walters}).
Later, Bowen \cite{B} gave a definition of topological entropy for uniformly continuous self-maps of metric spaces, which was extended by Hood \cite{hood} to uniformly continuous self-maps of uniform spaces. This notion of topological entropy coincides with that in \cite{AKM} in the compact case, when the given compact topological space is endowed with the unique uniformity compatible with the topology. 

Hood's definition of topological entropy applies to topological groups $G$, endowed with their left uniformity $\U$, as continuous endomorphisms  $\phi:G\to G$ are uniformly continuous with respect to $\U$. Assume that $G$ is a locally compact group, denote by $\mathcal C(G)$ the family of all compact neighborhoods of $1$ in $G$, and let $\mu$ be a left Haar measure on $G$. For $U\in\mathcal C(G)$ and $n\in\N_+$, the \emph{$n$-th $\phi$-cotrajectory of $U$} is $$C_n(\phi,U)=U\cap\phi^{-1}(U)\cap\ldots\cap\phi^{-n+1}(U)\in\mathcal C(G).$$
The \emph{topological entropy} of $\phi$ with respect to $U\in\mathcal C(G)$ is 
$$H_{top}(\phi,U)=\limsup_{n\to\infty}\frac{-\log\mu(C_n(\phi,U))}{n}$$
and the \emph{topological entropy} of $\phi$ is $$h_{top}(\phi)=\sup\{H_{top}(\phi,U)\mid U\in\mathcal C(G)\}.$$
 Following \cite{DG-islam,DS}, the \emph{topological entropy of $G$} is $$\mathbf E_{top}(G)=\{h_{top}(\phi)\mid \phi\in\End(G)\}.$$

\medskip
Some of the most relevant problems concerning discrete dynamical systems deal with the values of entropy, for instance the problem of the existence of topological automorphisms of compact groups with arbitrary small topological entropy. This problem is certainly the most outstanding and can be written equivalently for continuous endomorphisms as follows: 
\begin{equation}\label{Lehmer}
\text{is}\  \inf\{\mathbf E_{top}(G)\setminus\{0\}\mid G\ \text{compact group}\}>0?
\end{equation}

It  is equivalent to the celebrated Lehmer's problem in number theory (see \cite{Sch}). In fact, 
$$\inf\{\mathbf E_{top}(G)\setminus\{0\}\mid G\ \text{compact group}\}=\inf(\{h_{top}(\phi)\mid \phi\in\Aut(\widehat\Q^n),\ n\in\N\}\setminus\{0\})$$ and, for $n\in\N$ and $\phi\in\Aut(\widehat\Q^n)$, the so-called Yuzvinski's formula states that $h_{top}(\phi)$ coincides with the Mahler measure of the characteristic polynomial of $\phi$ taken with integer coefficients (see \cite{LW,Y}). Lehmer's problem asks whether or not the infimum of all positive values of the Mahler measure is zero (see \cite{Lehmer}).

\smallskip
A positive answer to the question in \eqref{Lehmer} would imply that the set $$\mathbf E_{top}=\{\mathbf{E}_{top}(G)\mid G\ \text{compact group}\}$$
of all possible values of the topological entropy of continuous endomorphisms of compact groups is countable, while a negative answer would imply that $\mathbf E_{top}=\R_{\geq0}\cup\{\infty\}$ (see \cite{B,DG-islam,Sch,Y}, see also \cite{DG_Adv} for the algebraic counterpart).

\smallskip
In the larger class of locally compact groups the counterpart of \eqref{Lehmer} has an easy answer. In fact, $\mathbf E_{top}(\R)=\R_{\geq0}$; more precisely, every continuous endomorphism of $\R$ has finite topological entropy and for every non-negative real $r$ there exists a topological automorphism of $\R$ of topological entropy $r$ (see \cite{B,Walters}, see also Remark~\ref{Rrem} below). This is why this paper, following the direction of \cite{DS} in the compact case, studies the problem of the finiteness of the topological entropy for locally compact groups.  
In order to pursue this scope, we follow \cite{DS} and introduce
$$\mathfrak E_0=\{G\mid \mathbf E_{top}(G)=\{0\}\}\quad \text{and}\quad \mathfrak E_{<\infty}=\{G\mid \infty\not\in\mathbf E_{top}(G)\}.$$ 
We start recalling the results from \cite{DS} about locally compact abelian groups in $\mathfrak E_{<\infty}$ and in $\mathfrak E_0$.

\smallskip
Following \cite{DPS} and denoting by $\Prm$ the set of all primes, for $p\in\Prm$ we say that an element $x$ of a locally compact group $G$ is \emph{topologically $p$-torsion} if $x^{p^n}\to 1$ in $G$, and let $G_p=\{x\in G\mid x\ \text{topologically $p$-torsion}\}$ be the \emph{topological $p$-component} of $G$. We denote by $c(G)$ the connected component of $G$ and by $B(G)$ the largest compactly covered subgroup of $G$. According to \cite{HHR}, a \emph{locally compact $p$-group} is a locally compact group $G$ such that $G=G_p$.

\begin{theorem}[{See \cite[Theorems A, B and C and Corollary 2]{DS}}]\label{origin}
Let $G$ be a locally compact abelian group. 
\begin{enumerate}[(a)]
\item If $G \in \mathfrak{E}_{<\infty}$, then $\dim(G) < \infty$;  this implication can be inverted when $G$ is compact and $G/c(G)\in \mathfrak{E}_{<\infty}$ (in particular, when $G$ is compact and connected). 
\item If $G \in \mathfrak{E}_0$, then $G$ is totally disconnected; if $G$ is compact and totally disconnected, then $G \in \mathfrak{E}_0$ if and only if $G \in \mathfrak{E}_{<\infty}$.
\item In case $G$ is compact,  $G \in \mathfrak{E}_0$ if and only if $G$ is totally disconnected and $G_p \in \mathfrak{E}_{0}$ for every $p\in\Prm$. 
\end{enumerate}
\end{theorem}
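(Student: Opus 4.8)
The plan is to combine the structure theory of locally compact abelian groups with the Bridge Theorem and the standard monotonicity and additivity properties of topological entropy, invoking the Addition Theorem only in the already-available compact instance of the short exact sequence $0\to c(G)\to G\to G/c(G)\to0$, so that the openness of the general Addition Theorem is never an issue. Write $G\cong\R^n\times G_0$ with $G_0$ containing a compact open subgroup $K$; then $\dim G=n+\dim K$ and $c(G)=\R^n\times c(G_0)$ with $c(G_0)$ a compact connected subgroup of $K$. I shall use: (i) $h_{top}(\phi)\geq h_{top}(\phi|_H)$ for a closed $\phi$-invariant subgroup $H$, and $h_{top}(\phi)\geq h_{top}(\bar\phi)$ for the endomorphism induced on $G/H$; (ii) finite additivity $h_{top}(\phi\times\psi)=h_{top}(\phi)+h_{top}(\psi)$ for compact groups; (iii) the Bridge Theorem $h_{top}(\phi)=h_{alg}(\widehat\phi)$ for $G$ compact, which reads $h_{top}(\phi)=\ent(\widehat\phi)$ when $\widehat G$ is torsion; and (iv) the classical value $h_{top}(\mu_r)=\log r$ for multiplication by an integer $r\geq2$ on $\R$ or on $\T$. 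For Part~(a): if $\dim G=\infty$, then $\dim K=\infty$ (as $n<\infty$), so $\widehat K$ is torsion-free of infinite rank and contains a copy of $\Z^{(\N)}$; dualising the inclusion $\Z^{(\N)}\hookrightarrow\widehat K$ (surjective because $\T$ is divisible) gives a continuous surjection $K\twoheadrightarrow\widehat{\Z^{(\N)}}=\T^{\N}$. As multiplication by $2$ commutes with every homomorphism, $K$ is $\mu_2$-invariant and this surjection, composed with the projection onto the first $m$ coordinates of $\T^{\N}$, is $\mu_2$-equivariant; by (i), (ii), (iv) this yields $h_{top}(\mu_2)\geq h_{top}(\mu_2|_K)\geq m\log2$ for every $m$, hence $h_{top}(\mu_2)=\infty$ and $G\notin\mathfrak E_{<\infty}$.

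For the converse in Part~(a), let $G$ be compact with $\dim G=r<\infty$ and $G/c(G)\in\mathfrak E_{<\infty}$, and fix $\phi\in\End(G)$. Bowen's fibre inequality (the ``$\leq$'' half of the Addition Theorem, valid for compact $G$) gives $h_{top}(\phi)\leq h_{top}(\phi|_{c(G)})+h_{top}(\bar\phi)$, where $\bar\phi$ acts on $G/c(G)$; the second term is finite by hypothesis, and for the first, $h_{top}(\phi|_{c(G)})=h_{alg}$ of the dual endomorphism $\psi$ of $\widehat{c(G)}$, which is torsion-free of rank $r$, hence sits inside $\Q^r$. Tensoring with $\Q$ extends $\psi$ to a $\Q$-linear endomorphism of $\Q^r$, of finite algebraic entropy (for instance $\sum_{|\lambda|>1}\log|\lambda|$ over the finitely many eigenvalues), and algebraic entropy is monotone under passage to invariant subgroups, so $h_{top}(\phi|_{c(G)})<\infty$ and $G\in\mathfrak E_{<\infty}$. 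For the first assertion of Part~(b), suppose $c(G)\neq0$. If $n\geq1$, the closed subgroup $\R\times\{0\}$ of $G$ is $\mu_2$-invariant and $h_{top}(\mu_2)\geq\log2>0$. If $n=0$, then $c(G_0)=c(G)\neq0$ is a nontrivial compact connected subgroup of $K$, so $\widehat{c(G_0)}$ is torsion-free and nonzero, whence $c(G_0)\twoheadrightarrow\widehat\Z=\T$ and $h_{top}(\mu_2)\geq h_{top}(\mu_2|_K)\geq h_{top}(\mu_2|_{c(G_0)})\geq\log2>0$. Either way $G\notin\mathfrak E_0$.

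For the second assertion of Part~(b): $\mathfrak E_0\subseteq\mathfrak E_{<\infty}$ is trivial, so let $G$ be compact totally disconnected with $G\in\mathfrak E_{<\infty}$. By the Bridge Theorem it is enough to prove that a discrete torsion abelian group $A$ admitting no endomorphism of infinite algebraic entropy admits none of positive algebraic entropy. Decompose $A=\bigoplus_pA_p$ into primary components: these are preserved by every endomorphism and $\mathrm{Hom}(A_p,A_q)=0$ for $p\neq q$, so $\End(A)=\prod_p\End(A_p)$ and $\ent$ is additive over this decomposition; thus one may assume $A=A_p$ is a $p$-group. Here I would invoke the dichotomy for $p$-groups: an endomorphism of positive algebraic entropy of a $p$-group forces a shift-type (Bernoulli) subquotient, and by replicating this piece countably many times inside $A_p$ one produces an endomorphism of infinite algebraic entropy. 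For Part~(c), a compact totally disconnected abelian $G$ decomposes topologically as $G=\prod_pG_p$; since $\phi(G_p)\subseteq G_p$ for every continuous $\phi$ (if $p^nx\to0$ then $p^n\phi(x)\to0$), one has $\phi=\prod_p\phi|_{G_p}$ and, by additivity of $h_{top}$ over products of compact groups, $h_{top}(\phi)=\sum_p h_{top}(\phi|_{G_p})$. Hence $G\in\mathfrak E_0$ as soon as every $G_p\in\mathfrak E_0$; conversely $G\in\mathfrak E_0$ forces total disconnectedness by Part~(b), and every $G_p$, being a direct factor, lies in $\mathfrak E_0$.

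The hard part will be the $p$-group dichotomy used in Part~(b), equivalently the classification of the discrete torsion groups — hence of the compact totally disconnected abelian groups — that belong to $\mathfrak E_{<\infty}$: the delicate step is to upgrade a single shift-type subquotient into an endomorphism of infinite entropy of the \emph{whole} group, which appears to require locating the relevant Bernoulli piece as a direct summand, or at least securing enough room to replicate it. A second, more routine, point requiring attention is that only the compact (already established) version of the Addition Theorem is used; in the argument above this is arranged by reducing every connected or compact sub-piece to the compact setting before applying monotonicity, additivity, the Bridge Theorem, and the Addition Theorem.
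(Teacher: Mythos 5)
The paper itself contains no proof of Theorem~\ref{origin}: it is quoted directly from \cite{DS} (Theorems A, B, C and Corollary 2 there), so your proposal is really an attempt to reprove the cited result rather than something to be matched against an internal argument. Within it, part (a), the first claim of (b), and the reduction in (c) are essentially sound and follow the standard route (duality, $\dim K=r_0(\widehat K)$, the doubling map, monotonicity, the compact Addition Theorem and the Bridge Theorem). The genuine gap is the second assertion of (b), i.e.\ the implication $\mathfrak E_{<\infty}\Rightarrow\mathfrak E_0$ for compact totally disconnected groups. After the Bridge Theorem and the primary decomposition, everything rests on your ``dichotomy for $p$-groups'': that a torsion $p$-group admitting an endomorphism of positive algebraic entropy must admit one of infinite algebraic entropy. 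This is exactly the deep content of the quoted theorems, and it is not a soft replication argument: one must locate a Bernoulli-type piece in a sufficiently controlled position (the machinery of \cite{DGSZ}, which \cite{DS} invokes, is what makes this work), and a positive-entropy endomorphism of the given group need not leave any convenient direct summand invariant. You state this step as something you ``would invoke'' and concede in your closing paragraph that you do not know how to carry out the upgrade; as written, the hardest implication of the theorem is therefore not proved.

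Two smaller points. In (a) you assert that $\widehat K$ is torsion-free of infinite rank; torsion-freeness is false in general, but all you need is that $\dim K=\infty$ forces infinite torsion-free rank of $\widehat K$, hence an embedding $\Z^{(\N)}\hookrightarrow\widehat K$, so the surjection $K\twoheadrightarrow\T^{\N}$ and the rest of the argument survive. In (c) you use $h_{top}(\phi)=\sum_p h_{top}(\phi\restriction_{G_p})$ for the infinite product $G=\prod_p G_p$, which goes beyond the finite additivity of Lemma~\ref{wAT} that you listed as tool (ii); the inequality $h_{top}(\phi)\leq\sum_p h_{top}(\phi\restriction_{G_p})$ is the direction you actually need for ``all $G_p\in\mathfrak E_0\Rightarrow G\in\mathfrak E_0$'' and requires a separate justification, e.g.\ via the Bridge Theorem together with the additivity of algebraic entropy over primary components of torsion groups, or a limiting argument of the kind used for Theorem~\ref{Gp} in the present paper.
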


Items (a) and (b) of the above theorem suggest to treat the case of totally disconnected locally compact abelian groups $G$ (it is worth noting that no complete reduction to the totally disconnected case is available -- see Question~\ref{ques0}(a)). We handle the case when $G$ is also compactly covered (i.e., each element of $G$ is contained in some compact subgroup of $G$); following the terminology from \cite{HHR}, we call those locally compact groups $G$ \emph{periodic}. 

\smallskip
 For periodic locally compact abelian groups we have the following reduction to locally compact abelian $p$-groups extending Theorem~\ref{origin}(c).

\begin{theorem}\label{red1}\label{prp:prod}
Let $G$ be a periodic locally compact abelian group. Then: 
\begin{enumerate}[(a)]
  \item $G \in \mathfrak E_0$ if and only if $G_p \in \mathfrak E_0$ for every $p\in\Prm$; 
  \item $G \in \mathfrak E_{<\infty}$ if and only if $G_p \in \mathfrak E_{<\infty}$  for every $p\in\Prm$ and $G_p \in \mathfrak E_0$ for almost all $p\in\Prm$. 
\end{enumerate}
\end{theorem}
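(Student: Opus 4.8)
The plan is to reduce the two membership problems for $G$ to the corresponding ones for the $G_p$, by means of the structure theory of periodic locally compact abelian groups. Recall (cf.\ \cite{HHR,DPS}) that such a $G$ possesses a compact open subgroup $V=\prod_p V_p$, with $V_p=V\cap G_p$ compact open in $G_p$, and that $G$ is topologically isomorphic to the local direct product of the $G_p$'s relative to the $V_p$'s, i.e.\ to $\{(x_p)_p\in\prod_p G_p:\ x_p\in V_p\ \text{for almost all}\ p\}$, with $\prod_p V_p$ an open subgroup. Each $G_p$ being a closed topologically characteristic subgroup, every $\phi\in\End(G)$ restricts to $\phi_p:=\phi|_{G_p}\in\End(G_p)$, and, $\bigoplus_p G_p$ being dense in $G$, $\phi$ is determined by the family $(\phi_p)_p$; conversely a family $(\psi_p)_p$ with $\psi_p\in\End(G_p)$ arises from a (unique) $\phi\in\End(G)$ precisely when $\psi_p(V_p)\subseteq V_p$ for almost all $p$. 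In particular, for a single prime $p$, any $\psi\in\End(G_p)$ extends to some $\phi\in\End(G)$ with $\phi_p=\psi$ and $\phi_q=\mathrm{id}_{G_q}$ for $q\neq p$.

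The technical heart will be a local Addition Theorem: $h_{top}(\phi)=\sum_{p}h_{top}(\phi_p)$ in $[0,\infty]$, for every $\phi\in\End(G)$. To prove it I would restrict the supremum defining $h_{top}(\phi)$ to the boxes $\prod_p W_p$ (with $W_p\subseteq V_p$ compact open and $W_p=V_p$ for almost all $p$), which is legitimate because every compact neighbourhood of $1$ contains such a box and $H_{top}(\phi,\cdot)$ is antitone. On a box $\phi$ acts coordinatewise, so $C_n(\phi,\prod_p W_p)=\prod_p C_n(\phi_p,W_p)$, Haar measure factors, and only finitely many coordinates contribute; the additivity of topological entropy over direct products then gives $H_{top}(\phi,\prod_p W_p)=\sum_p H_{top}(\phi_p,W_p)$, whence the formula after passing to the supremum over boxes. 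The finitely many primes at which $\phi_p(V_p)\not\subseteq V_p$ contribute their (finite or infinite) entropies additively and cause no difficulty.

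Granting the local Addition Theorem, item (a) and the "if" direction of (b) are immediate. If every $G_p\in\mathfrak E_0$, then $h_{top}(\phi)=\sum_p h_{top}(\phi_p)=0$ for all $\phi$, so $G\in\mathfrak E_0$; conversely, for $\psi\in\End(G_p)$ the extension above gives $h_{top}(\psi)=h_{top}(\phi)=0$. If $G_p\in\mathfrak E_{<\infty}$ for all $p$ and $G_p\in\mathfrak E_0$ for $p$ outside a finite set $F$, then for every $\phi$ we get $h_{top}(\phi)=\sum_{p\in F}h_{top}(\phi_p)$, a finite sum of finite numbers. For the "only if" direction of (b): each $G_p\in\mathfrak E_{<\infty}$, again by the extension trick; and, arguing by contradiction, if $G_p\notin\mathfrak E_0$ for infinitely many $p$, I would choose for each such $p$ some $\psi_p\in\End(G_p)$ with $h_{top}(\psi_p)>0$, replace $\psi_p$ by a power so that $h_{top}(\psi_p)\geq 1$ (using $h_{top}(\psi_p^k)=k\,h_{top}(\psi_p)$), arrange that each $\psi_p$ stabilises a compact open subgroup of $G_p$ --- which I then adopt as the reference subgroup $V_p$, without changing $G$ --- assemble the $\psi_p$, together with identities at the remaining primes, into a single $\phi\in\End(G)$, and conclude $h_{top}(\phi)\geq\sum_p h_{top}(\psi_p)=\infty$, contradicting $G\in\mathfrak E_{<\infty}$.

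The hard part, as this outline makes clear, is the realisation step just used: showing that whenever $G_p\notin\mathfrak E_0$ there is an endomorphism of $G_p$ of positive topological entropy that stabilises a compact open subgroup, so that infinitely many of them can be glued to a continuous endomorphism of $G$. This is a structural question about locally compact abelian $p$-groups outside $\mathfrak E_0$, and is exactly where the finer results of the paper --- on $p$-groups of finite rank, on the scale, and on the Addition Theorem --- have to enter; rigorously establishing the local Addition Theorem of the second paragraph for an arbitrary periodic $G$ is the other delicate point, consistent with the paper's observation that the Addition Theorem is open in general.
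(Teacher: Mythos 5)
Your reduction machinery is sound and close to the paper's: the coordinatewise description of $\End(G)$ on the local product, and the box computation yielding $h_{top}(\phi)=\sum_{p\in\Prm}h_{top}(\phi\restriction_{G_p})$, give exactly the formula the paper establishes in Theorem~\ref{Gp} (there obtained by splitting off the finite subproducts $G_{(n)}$ and invoking Lemma~\ref{wAT}, then letting $n\to\infty$). From this formula, part (a), the sufficiency in (b), and the clause ``$G_p\in\mathfrak E_{<\infty}$ for every $p$'' in the necessity of (b) follow as you say; the paper obtains these converse memberships from the fact that each $G_p$ is a topological direct summand of $G$ rather than from your extension trick, which is an immaterial difference.

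The genuine gap is the remaining clause of the necessity in (b), namely that $G\in\mathfrak E_{<\infty}$ should force $G_p\in\mathfrak E_0$ for almost all $p$, and the realization step you isolate there is not merely hard but impossible. The constraint you yourself record (a family $(\psi_p)_p$ glues to an endomorphism of $G$ only if $\psi_p(V_p)\subseteq V_p$ for almost all $p$) is fatal: for $G_p=\Q_p$ every continuous endomorphism is multiplication by some $\lambda\in\Q_p$, and it stabilizes a compact open subgroup $p^k\Z_p$ exactly when $|\lambda|_p\leq 1$, i.e.\ exactly when its topological entropy is $0$; so $\Q_p$ admits no positive-entropy endomorphism preserving any compact open subgroup, and you cannot re-choose the reference subgroups $V_p$ at infinitely many primes without changing the group $G$. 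In fact your own entropy formula shows the clause itself fails: let $G=\overset{loc}{\prod}_{p\in\Prm}(\Q_p,\Z_p)$, a periodic locally compact abelian group with $G_p\cong\Q_p\notin\mathfrak E_0$ for every $p$ (Example~\ref{Qpp}). Any $\phi\in\End(G)$ restricts on each $G_p$ to multiplication by some $\lambda_p\in\Q_p$, and since $\phi\bigl(\prod_p\Z_p\bigr)$ is compact, hence contained in $\prod_{p\in F}\Q_p\times\prod_{p\notin F}\Z_p$ for some finite $F$, one gets $|\lambda_p|_p\leq 1$, so $h_{top}(\phi\restriction_{G_p})=0$, for all $p\notin F$; therefore $h_{top}(\phi)=\sum_p h_{top}(\phi\restriction_{G_p})$ is a finite sum of finite terms and $G\in\mathfrak E_{<\infty}$, although no $G_p$ lies in $\mathfrak E_0$. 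So no argument can close this part of your proof as stated; note that the paper's own proof of the converse (Proposition~\ref{proofMain2}) likewise only derives $G_p\in\mathfrak E_{<\infty}$ for every $p$ from $G\in\mathfrak E_{<\infty}$ and does not address the ``almost all'' condition, so the difficulty you flagged points at the statement of that clause rather than at a missing lemma on your side.
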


From now on, with $p$ we always denote a prime number.
A locally compact abelian $p$-group $G$ is a $\Z_p$-module, so we can consider its $\Z_p$-rank $\rk_{\Z_p}(G)$ and its $p$-rank $r_p(G)$ as well.  When $\rk_{\Z_p}(G)$ and $r_p(G)$ are finite, let $$\mathrm{rank}_p(G)=\rk_{\Z_p}(G) + r_p(G).$$
According to \cite[Lemma 3.91]{HHR}, $\mathrm{rank}_p(G)$ is the smallest $n\in\N$ such that every topologically finitely generated subgroup of $G$ is generated by at most $n$ elements. For example the group $\mathbb{Q}_p$ of $p$-adic numbers is a torsion-free periodic locally compact abelian $p$-group with $r_p(G)=0$ while $\rank_p(\Q_p) = \rk_{\Z_p}(\Q_p)=1$.

Motivated by the following remark, we impose the additional restriction to have finite $\rank_p$ on the locally compact abelian $p$-groups.
Very roughly speaking, the finiteness of the cardinal invariant $\rank_p$ will replace (only to a certain extent -- see Remark~\ref{exotic:groups}(a)) the finiteness of the dimension in Theorem~\ref{origin}(a) as far as the description of the locally compact abelian $p$-groups in $\mathfrak E_{<\infty}$ is concerned. 

\begin{remark}\label{exotic:groups}
Here we recall some results from \cite{DS}, showing that the question of a complete classification of the compact abelian $p$-groups $G$ (i.e., abelian pro-$p$-groups) belonging to $\mathfrak{E}_{0}$ is far from a satisfactory understanding.
\begin{enumerate}[(a)]
\item By \cite[Theorem E]{DS} (its proof makes essential use of \cite[Theorem 13]{DGSZ}), there exists a family of $2^\mathfrak c$ many pairwise non-isomorphic compact abelian $p$-groups $K$ of weight $\mathfrak c$ in $\mathfrak E_0$ with $\rank_p(K)$ infinite.  Therefore, $\rank_p(G)< \infty$ is not a necessary condition neither for  $G\in \mathfrak E_{<\infty}$ nor (even) for $G\in\mathfrak E_0$.

\item There exists a class of compact abelian $p$-groups $K$ with $\mathbf E_{top}(K)=\{0,\infty\}$ (i.e., for every $\phi\in \End(K)$ one has $h_{top}(\phi)\in \{0,\infty\}$ and no intermediate values of the topological entropy can be attained). The groups $\Z_p^\N$ and $\prod_{n\in\N_+} \Z(p^n)$ have this property, and a complete description of this class can be found in \cite[Theorem D]{DS}. 
\end{enumerate}
\end{remark}

We recall the following recent characterization of the structure of locally compact abelian $p$-groups with finite $\mathrm{rank}_p$.

\begin{theorem}[{See \cite[Theorem 3.97]{HHR}}]\label{p-rank-finite} 
A locally compact abelian $p$-group $G$ has $\mathrm{rank}_p(G)<\infty$ if and only if it is isomorphic to
\begin{equation}\label{HHR}
\Z_p^{n_1} \times\Q_p^{n_2}\times \Z(p^\infty)^{n_3} \times F_p
\end{equation}
for some integers $n_1,n_2,n_3\in\N$ and a finite $p$-group $F_p$ with $r_p(F_p)=n_4\in\N$. In particular,
\[\rank_p(G)=n_1+n_2+n_3+ n_4,\] 
with $\rk_{\Z_p}(G)=n_1+n_2$ and $r_p(G)=n_3+n_4$.
\end{theorem}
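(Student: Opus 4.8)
\smallskip
\noindent\emph{Proof proposal.}
The ``if'' part is the routine one. Each of $\Z_p$, $\Q_p$, $\Z(p^\infty)$ and every finite $p$-group is a locally compact abelian $p$-group, a finite direct product of such is again one, and both invariants $\rk_{\Z_p}(\cdot)$ and $r_p(\cdot)$ are additive on finite direct products. Since $\rk_{\Z_p}(\Z_p)=\rk_{\Z_p}(\Q_p)=1$, $r_p(\Z_p)=r_p(\Q_p)=0$, $\rk_{\Z_p}(\Z(p^\infty))=0$, $r_p(\Z(p^\infty))=1$ and $\rk_{\Z_p}(F_p)=0$, $r_p(F_p)=n_4$, the group in \eqref{HHR} has $\rank_p=n_1+n_2+n_3+n_4<\infty$, with $\rk_{\Z_p}=n_1+n_2$ and $r_p=n_3+n_4$. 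So it remains to prove the ``only if'' part.

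Assume $\rank_p(G)<\infty$. The first step is to check that $G$ is totally disconnected: a nontrivial connected subgroup of $G$ would contain a copy of $\R$, which is impossible because $\R_p=\{0\}$, or else a nontrivial compact connected subgroup $C$, which is impossible because the dual of $C_p=C$ is the $p$-torsion subgroup of the torsion-free discrete group $\widehat C$, hence trivial; either way $G=G_p$ is contradicted. By van Dantzig's theorem $G$ then has a compact open subgroup $K$, and $\rank_p(K)\le\rank_p(G)<\infty$; indeed the characterization in \cite[Lemma~3.91]{HHR} of $\rank_p$ in terms of topologically finitely generated subgroups shows that $\rank_p$ does not increase when passing to closed subgroups (nor, as one checks similarly, to Hausdorff quotients).

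The second step is to read off $K$ and $G/K$ from the classical structure theory of discrete abelian $p$-groups. By Pontryagin duality $\widehat K$ is a discrete abelian $p$-group of finite $p$-rank; any such group splits off its (divisible) maximal divisible subgroup $\Z(p^\infty)^{n_1}$ and has finite reduced complement — a reduced abelian $p$-group of finite $p$-rank embeds into its divisible hull $\Z(p^\infty)^{k}$, all of whose reduced subgroups are finite — so $\widehat K\cong\Z(p^\infty)^{n_1}\oplus F_1'$ with $F_1'$ finite, whence $K\cong\Z_p^{n_1}\times F_1$ for a finite $p$-group $F_1$. On the other hand $G/K$ is discrete, it is $p$-torsion because $G=G_p$, and it has finite $p$-rank (being a discrete quotient of $G$), so by the same facts $G/K\cong\Z(p^\infty)^{n_3}\times F_2$ with $F_2$ finite.

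The last, and substantial, step is to reconstruct $G$ from the extension $0\to K\to G\to G/K\to 0$. After dividing out the finite subgroup $F_1$ of $K$ one is left with an extension of $\Z(p^\infty)^{n_3}\times F_2$ by $\Z_p^{n_1}$; its ``divisible part'' is governed by $\mathrm{Ext}^1(\Z(p^\infty)^{n_3},\Z_p^{n_1})\cong\Z_p^{\,n_1 n_3}$, and classifying the extension up to isomorphism amounts to classifying an $n_1\times n_3$ matrix over the discrete valuation ring $\Z_p$ modulo the $\mathrm{GL}_{n_1}(\Z_p)\times\mathrm{GL}_{n_3}(\Z_p)$-action, i.e.\ to its Smith normal form $\mathrm{diag}(p^{d_1},\ldots,p^{d_r},0,\ldots)$. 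This decomposes the extension into one-dimensional blocks: a unit entry ($d_i=0$) contributes a copy of $\Q_p$ via the nonsplit locally compact extension $0\to\Z_p\to\Q_p\to\Z(p^\infty)\to0$, an entry $p^{d_i}$ with $d_i\ge1$ contributes $\Q_p\times\Z(p^{d_i})$, each of the $n_1-r$ zero rows contributes $\Z_p$, and each of the $n_3-r$ zero columns contributes $\Z(p^\infty)$; one sets $n_2:=r$. Finally the remaining finite data — $F_1$, $F_2$ and the cyclic factors $\Z(p^{d_i})$ — are gathered into a single finite direct factor $F_p$, using that the discrete divisible group $\Z(p^\infty)^{n_3}$ is injective in the category of locally compact abelian groups and that $\mathrm{Ext}^1_{\mathrm{LCA}}(\Z_p,F)=\mathrm{Ext}^1_{\mathrm{LCA}}(\Q_p,F)=0$ for every finite $p$-group $F$, so that $G$ splits as the product of its torsion subgroup $t(G)\cong\Z(p^\infty)^{n_3}\times F_p$ and $G/t(G)\cong\Z_p^{n_1}\times\Q_p^{n_2}$. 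I expect this last step to be the main obstacle: the delicate point is that all of these extension computations must be carried out inside the category of locally compact abelian groups rather than merely algebraically (the abstract $\mathrm{Ext}$-groups are generally strictly larger), and what makes it tractable is precisely that, $K$ being compact open and $G/K$ discrete, every extension in sight is ``semidiscrete''. The announced equalities $\rk_{\Z_p}(G)=n_1+n_2$ and $r_p(G)=n_3+n_4$ then follow by applying the ``if'' part to the decomposition obtained.
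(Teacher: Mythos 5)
First, a point of comparison: the paper does not prove this statement at all — it is imported verbatim from \cite[Theorem 3.97]{HHR} — so there is no in-paper argument to measure yours against; what you wrote has to stand on its own. Your skeleton (show $G$ is totally disconnected, take a compact open $K$ by van Dantzig, identify $K$ and the discrete quotient $G/K$ via Pontryagin duality and the classification of abelian $p$-groups of finite rank, then solve the extension problem over $\Z_p$ using Smith normal form) is the natural route and is essentially the standard one.

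However, there are genuine gaps, concentrated exactly where you yourself flag the main obstacle. (1) Your final splitting $G\cong t(G)\times G/t(G)$ is justified by the claim that $\Z(p^\infty)^{n_3}$ is injective in the category of locally compact abelian groups; this is false. By Moskowitz's theorem the injectives in that category are exactly the groups $\R^a\times\T^I$, and concretely $\mathrm{Ext}_{\mathrm{LCA}}(\Z_p,\Z)\cong \mathrm{Hom}(\Z_p,\T)\cong\Z(p^\infty)\neq 0$, so dually $\mathrm{Ext}_{\mathrm{LCA}}(\T,\Z(p^\infty))\neq 0$ and $\Z(p^\infty)$ is not injective. What your argument actually needs are the special vanishings $\mathrm{Ext}_{\mathrm{LCA}}(\Z_p,\Z(p^\infty))=\mathrm{Ext}_{\mathrm{LCA}}(\Q_p,\Z(p^\infty))=0$ (together with the observation that the non-vanishing groups $\mathrm{Ext}(\Z(p^\infty),\Z_p)$, $\mathrm{Ext}(F_2,\Z_p^{n_1})$, $\mathrm{Ext}(\Z(p^\infty),F_1)$ only produce middle groups already on the list); these are true, but they require separate hands-on arguments (pick a compact open $U$ meeting the discrete kernel trivially, split the compact piece by dualizing, absorb the finite cokernel using divisibility), none of which appears in your sketch, so the decisive step is not established by the tools you cite. (2) The assertion that $\widehat K$ is a discrete $p$-group of \emph{finite} $p$-rank is not justified: from $\rank_p(K)<\infty$ you must first show that $K$ is topologically finitely generated (equivalently that $K/\overline{pK}$ is finite — otherwise one produces topologically finitely generated subgroups of $K$ needing arbitrarily many generators, contradicting \cite[Lemma 3.91]{HHR}), whence $K$ is a quotient of $\Z_p^{n}$ and $\widehat K\le\Z(p^\infty)^{n}$; without this, the decomposition $K\cong\Z_p^{n_1}\times F_1$ hangs in the air. (3) More minor: ``dividing out $F_1$'' and later re-inserting it is itself a further extension problem, and the finite part $F_2$ of the quotient does interact with $\Z_p^{n_1}$ (e.g.\ $\Z_p$ is a non-split extension of $\Z(p^k)$ by $\Z_p$); this does not spoil the final form, but your sketch treats the finite data as inert rather than checking that these non-split extensions stay within the class. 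So the strategy is viable and close to the proof in \cite{HHR}, but as written the crucial splitting rests on a false general principle and two finiteness/splitting lemmas are asserted rather than proved.
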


This result provides a nice property of the cardinal invariant $\rank_p$, namely its preservation under Pontryagin duality. Indeed, if $G\cong\Z_p^{n_1} \times\Q_p^{n_2}\times \Z(p^\infty)^{n_3} \times F_p$, then $\widehat{G}\cong \Z_p^{n_3} \times\Q_p^{n_2}\times \Z(p^\infty)^{n_1} \times F_p$, and so $\rank_p(\widehat{G}) = \rank_p(G)$, while $\rk_{\Z_p}(\widehat{G})$ and $\rk_{\Z_p}(G)$, and similarly $r_p(\widehat{G})$ and $r_p(G)$, need not coincide.

\medskip
It turns out that all locally compact abelian $p$-groups with finite $\rank_p$ are in $\mathfrak E_{<\infty}$, and one can characterize those of them that are actually in $\mathfrak E_0$: 
\begin{theorem}\label{main1}
If $G$ is a locally compact abelian $p$-group with $\mathrm{rank}_p(G)<\infty$, then $G \in \mathfrak E_{<\infty}$. 
 Moreover, $G\in \mathfrak E_0$ if and only if $n_2=0$ in \eqref{HHR}.
\end{theorem}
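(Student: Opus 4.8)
The plan is to combine the structure theorem (Theorem~\ref{p-rank-finite}) with the Addition Theorem, applied along a canonical filtration of $G$ by fully invariant closed subgroups, reducing the computation of $h_{top}(\phi)$ for an arbitrary $\phi\in\End(G)$ to the ``building blocks'' $\Z_p$, $\Q_p$, $\Z(p^\infty)$ and finite $p$-groups; among these only $\Q_p$ contributes a possibly nonzero (but always finite) value of the topological entropy.

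First I would use Theorem~\ref{p-rank-finite} to assume $G=\Z_p^{n_1}\times\Q_p^{n_2}\times\Z(p^\infty)^{n_3}\times F_p$. Since $\Z_p$ and $F_p$ are reduced while $\Q_p$ and $\Z(p^\infty)$ are divisible, the torsion subgroup $t(G)\cong\Z(p^\infty)^{n_3}\times F_p$, the maximal divisible subgroup $d(G)\cong\Q_p^{n_2}\times\Z(p^\infty)^{n_3}$, and hence also $d(G)+t(G)\cong\Q_p^{n_2}\times\Z(p^\infty)^{n_3}\times F_p$, are fully invariant closed subgroups of $G$. Thus for every $\phi\in\End(G)$ the chain
\[
\{0\}\ \le\ t(G)\ \le\ d(G)+t(G)\ \le\ G
\]
consists of closed $\phi$-invariant subgroups, with successive subquotients topologically isomorphic to $\Z(p^\infty)^{n_3}\times F_p$ (discrete), to $\Q_p^{n_2}$, and to $\Z_p^{n_1}$ (compact). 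Applying the Addition Theorem to the two steps of this chain — the first step involving a discrete closed subgroup, the second a subgroup with compact quotient, so that only the versions of the Addition Theorem established earlier in the paper are used — and using $h_{top}(\phi|_{t(G)})=0$ (since $t(G)$ is discrete), one obtains
\[
h_{top}(\phi)=h_{top}(\phi_2)+h_{top}(\phi_3),
\]
where $\phi_2$ and $\phi_3$ are the continuous endomorphisms induced by $\phi$ on $\Q_p^{n_2}$ and on $\Z_p^{n_1}$, respectively.

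It remains to estimate the two summands. A continuous endomorphism of $\Z_p^{n_1}$ is multiplication by a matrix over $\Z_p$, and $\Z_p^{n_1}\in\mathfrak E_0$: by the Bridge Theorem between topological and algebraic entropy this reduces to the vanishing of the algebraic entropy on the Pontryagin dual $\Z(p^\infty)^{n_1}$, a torsion abelian group of finite rank, where every algebraic trajectory is bounded (a finite subgroup of exponent $p^e$ lies in the characteristic finite subgroup $\Z(p^\infty)^{n_1}[p^e]$, of order $p^{e n_1}$). Hence $h_{top}(\phi_3)=0$. On the other hand a continuous endomorphism of $\Q_p^{n_2}$ is multiplication by a matrix $A\in M_{n_2}(\Q_p)$, and a crude estimate of its cotrajectories in terms of the operator norm of $A$ (or the $p$-adic analogue of Yuzvinskii's formula, $h_{top}(A)=\sum_i\log^+|\lambda_i|_p$ over the eigenvalues of $A$) gives $h_{top}(\phi_2)<\infty$; therefore $h_{top}(\phi)<\infty$ and $G\in\mathfrak E_{<\infty}$. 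For the ``moreover'' part, if $n_2=0$ then $\Q_p^{n_2}$ is trivial, so the displayed identity forces $h_{top}(\phi)=0$ for every $\phi\in\End(G)$, i.e.\ $G\in\mathfrak E_0$; if $n_2\ge1$, I would take the topological automorphism $\phi_0\in\Aut(G)$ acting as $x\mapsto p^{-1}x$ on one fixed $\Q_p$-coordinate and as the identity on all remaining coordinates, note that the corresponding copy $H\cong\Q_p$ is closed and $\phi_0$-invariant with $C_n(\phi_0|_H,\Z_p)=p^{\,n-1}\Z_p$ and hence $h_{top}(\phi_0|_H)=\log p$, and conclude by monotonicity of the topological entropy under restriction to closed invariant subgroups that $h_{top}(\phi_0)\ge\log p>0$, so $G\notin\mathfrak E_0$.

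The step I expect to be the real obstacle is the justification of the two instances of the Addition Theorem used above — for a discrete closed invariant subgroup, and for an invariant subgroup with compact quotient — which must be supplied by the versions established earlier in the paper, since its general form is unavailable; all the remaining evaluations are routine given the Bridge Theorem and the elementary entropy computations on $\Z_p$ and $\Q_p$.
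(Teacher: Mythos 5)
Your overall strategy --- reduce via Theorem~\ref{p-rank-finite} to $G=\Z_p^{n_1}\times\Q_p^{n_2}\times\Z(p^\infty)^{n_3}\times F_p$, kill the discrete torsion part, and isolate the $\Q_p^{n_2}$ contribution --- is exactly the paper's, and your first reduction is sound: $t(G)\cong\Z(p^\infty)^{n_3}\times F_p$ is a discrete, fully invariant direct factor, so $AT^0(G,\phi,t(G))$ is covered by Proposition~\ref{fundamentalstep} and Corollary~\ref{cor:fundamentalstep}, giving $h_{top}(\phi)=h_{top}(\bar\phi_{G/t(G)})$. The gap is your second use of the Addition Theorem, for $H=G/t(G)\cong\Z_p^{n_1}\times\Q_p^{n_2}$ and its fully invariant closed subgroup $d(H)\cong\Q_p^{n_2}$. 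You justify it as an instance ``for an invariant subgroup with compact quotient, established earlier in the paper,'' but no such version exists in the paper: the established instances concern open invariant subgroups (Lemma~\ref{DS4.17}, Corollary~\ref{new:lemma1}), discrete direct factors (Proposition~\ref{fundamentalstep}), kernels under a normality hypothesis (Theorem~\ref{LAAAAAst:thm}), products of invariant factors (Lemma~\ref{wAT}), and $\Q_p^n$ (Theorem~\ref{ATQp}); a general ``compact quotient'' case for endomorphisms is precisely part of the open Addition Theorem problem the paper is working around. Nor does Lemma~\ref{wAT} apply here, because the complement $\Z_p^{n_1}$ need not be $\phi$-invariant: an endomorphism of $H$ has the triangular form $\psi(x,y)=(\psi_1(x),\psi_2(y)+\psi_3(x))$ with a possibly nonzero $\psi_3\colon\Z_p^{n_1}\to\Q_p^{n_2}$. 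Monotonicity (Lemma~\ref{mon}(a)) only gives $h_{top}(\psi)\geq h_{top}(\psi_2)$; the upper bound $h_{top}(\psi)\leq h_{top}(\psi_2)$ is the real content of this step and your proposal does not supply it.

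The paper closes exactly this gap by viewing $H$ as an open subgroup of its divisible hull $D(H)\cong\Q_p^{n_1+n_2}$, so that $h_{top}(\psi)=h_{top}(\tilde\psi)$ by Corollary~\ref{new:lemma1}, and then invoking Theorem~\ref{ATQp} (which itself rests on the $p$-adic Yuzvinski formula, Theorem~\ref{yuzvinskip}) for the $\tilde\psi$-invariant subspace $\Q_p^{n_2}$; together with $h_{top}(\psi_1)=0$ this yields $h_{top}(\phi)=h_{top}(\psi_2)$, which is Proposition~\ref{particularbutgeneral}. Once that identity is in hand, the rest of your argument is fine and matches the paper: finiteness from Theorem~\ref{yuzvinskip}; for $n_2\geq1$, the automorphism multiplying one $\Q_p$-coordinate by $1/p$ together with monotonicity (Example~\ref{Qpp}) shows $G\notin\mathfrak E_0$; for $n_2=0$, Corollary~\ref{invbase}, Remark~\ref{discrete=0}, Corollary~\ref{cor:fundamentalstep} and Lemma~\ref{Lemma3space} give $G\in\mathfrak E_0$. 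Your Bridge Theorem detour for $\Z_p^{n_1}\in\mathfrak E_0$ is unnecessary (Corollary~\ref{invbase} gives it directly) but harmless. In short: the step you yourself flagged as the obstacle is genuinely missing, and it cannot be discharged by citing a ``compact quotient'' Addition Theorem; it needs the divisible-hull trick plus $AT(\Q_p^n)$.
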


The condition $n_2=0$ in Theorem~\ref{main1} means that the maximal divisible subgroup $d(G)$ of $G$ has $\rk_{\Z_p}(d(G))=0$, that is, $d(G)$ is discrete.

\smallskip
Theorem~\ref{main1} has to be compared with its counterpart for compact abelian groups; indeed, we have seen in Theorem~\ref{origin}(b) that when $G$ is a totally disconnected compact abelian group, $G \in \mathfrak E_{<\infty}$ precisely when $G \in \mathfrak E_0$.
More specifically, if $G$ is a compact abelian $p$-group  with $\rank_p(G)<\infty$, then $G\cong\Z_p^{n}\times F_p$ for some $n\in\N$ and a finite $p$-group $F_p$, by Theorem~\ref{p-rank-finite}. According to Proposition \ref{particularbutgeneral}, $\Z_p^{n}\times F_p\in\mathfrak E_0$ (this follows also from Theorem~\ref{main1}).

\smallskip
Making use of the ``local" Theorem~\ref{main1}, in Theorem~\ref{main2} we extend the characterization of the classes $\mathfrak E_{<\infty}$ and $\mathfrak E_0$ within a significantly larger class of locally compact abelian groups $G$.  Note that $$\varpi(G)=c(G)+B(G),$$ which is a fully invariant open subgroup of $G$ (see \cite[Proposition 3.3.6]{DPS}, see also \cite[pp. 29--30]{DG-BT}).

\begin{theorem}\label{main2}
Let $G$ be a locally compact abelian group with $c(B(G))=\{0\}$ and $\mathrm{rank}_p(G_p)<\infty$ for every $p\in\Prm$. Then:
\begin{enumerate}[(a)]
\item  $G \in \mathfrak E_{<\infty}$ holds whenever $\sum_{p\in \Prm} \rk_{\Z_p}(d(G_p))< \infty$;
\item $G\in\mathfrak E_{0}$ holds whenever $G$ is totally disconnected and $\rk_{\Z_p}(d(G_p)) = 0$ (i.e., $d(G_p)$ is discrete) for every $p\in\Prm$.
\end{enumerate}
If $G= \varpi(G)$, then also the converse implications hold in (a) and (b).
\end{theorem}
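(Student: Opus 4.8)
The plan is to reduce the statement, in two steps, to Theorem~\ref{main1} and Theorem~\ref{red1}: first replace $G$ by the open subgroup $\varpi(G)$, then split off the connected component and invoke the periodic reduction. For the first step, note that $\varpi(G)$ is open and fully invariant, so every $\phi\in\End(G)$ restricts to an endomorphism of $\varpi(G)$, and $h_{top}(\phi)=h_{top}(\phi|_{\varpi(G)})$: the compact neighbourhoods of $0$ contained in $\varpi(G)$ are cofinal in $\mathcal C(G)$, and on such a neighbourhood $U$ the cotrajectories $C_n(\phi,U)$ and the Haar measure are computed inside $\varpi(G)$ (equivalently, this is the instance of the Addition Theorem for the closed subgroup $\varpi(G)$, whose discrete quotient $G/\varpi(G)$ carries zero entropy). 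Since $\varpi(G)$ is open, $c(\varpi(G))=c(G)$ and $B(\varpi(G))=B(G)$ (as $B(G)\subseteq\varpi(G)$); and since each $G_p$ is compactly covered by Theorem~\ref{p-rank-finite}, $G_p\subseteq B(G)$, whence $(\varpi(G))_p=G_p$ and $d((\varpi(G))_p)=d(G_p)$. Thus $\varpi(G)$ satisfies the same hypotheses and has the same invariants as $G$, $\varpi(\varpi(G))=\varpi(G)$, and $\mathbf E_{top}(G)=\{h_{top}(\phi|_{\varpi(G)})\mid\phi\in\End(G)\}\subseteq\mathbf E_{top}(\varpi(G))$, with equality when $G=\varpi(G)$. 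Hence the ``whenever'' implications for $G$ follow from those for $\varpi(G)$, while the converses are exactly the statements that require the extra hypothesis $G=\varpi(G)$. From now on assume $G=\varpi(G)=c(G)+B(G)$.

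Next I would pin down the structure. By the principal structure theorem for locally compact abelian groups, $G\cong\R^m\times H$ with $H$ possessing a compact open subgroup, so $c(G)=\R^m\times c(H)$ with $c(H)$ compact connected; but $c(H)$ is compactly covered and connected through $0$, hence $c(H)\subseteq c(B(G))=\{0\}$, so $H$ is totally disconnected and $m<\infty$. From $\R^m+B(H)=c(G)+B(G)=G=\R^m\times H$ we get $H=B(H)=B(G)$, so $G\cong\R^m\times B(G)$ with $B(G)$ periodic and, since $(\R^m)_p=\{0\}$, $B(G)_p=G_p$ and $d(B(G)_p)=d(G_p)$ for every $p$. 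As $\mathrm{Hom}(\R^m,B(G))=\mathrm{Hom}(B(G),\R^m)=\{0\}$ (a connected image inside a totally disconnected group, and a compactly covered image inside $\R^m$, are trivial), every $\phi\in\End(G)$ is a product $\phi_1\times\phi_2$ with $\phi_1\in\End(\R^m)$ and $\phi_2\in\End(B(G))$, so the product formula for topological entropy (a special case of the Addition Theorem) gives $h_{top}(\phi)=h_{top}(\phi_1)+h_{top}(\phi_2)$ and therefore
\[\mathbf E_{top}(G)=\mathbf E_{top}(\R^m)+\mathbf E_{top}(B(G)):=\{a+b\mid a\in\mathbf E_{top}(\R^m),\ b\in\mathbf E_{top}(B(G))\}.\]
Since $\mathbf E_{top}(\R^m)=\R_{\geq0}$ for $m\geq1$ and $\mathbf E_{top}(\R^m)=\{0\}$ for $m=0$ (Remark~\ref{Rrem}), one has $\infty\notin\mathbf E_{top}(\R^m)$ always and $\mathbf E_{top}(\R^m)=\{0\}$ iff $G$ is totally disconnected; hence $G\in\mathfrak E_{<\infty}\iff B(G)\in\mathfrak E_{<\infty}$ and $G\in\mathfrak E_0\iff G$ is totally disconnected and $B(G)\in\mathfrak E_0$.

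It remains to feed this into the periodic reduction. Here $B(G)$ is periodic with $\mathrm{rank}_p(B(G)_p)=\mathrm{rank}_p(G_p)<\infty$ for all $p$, so by Theorem~\ref{main1} each $B(G)_p\in\mathfrak E_{<\infty}$, with $B(G)_p\in\mathfrak E_0$ precisely when $\rk_{\Z_p}(d(G_p))=0$. By Theorem~\ref{red1}(a), $B(G)\in\mathfrak E_0$ iff $\rk_{\Z_p}(d(G_p))=0$ for all $p$, which together with the previous paragraph gives~(b); by Theorem~\ref{red1}(b), $B(G)\in\mathfrak E_{<\infty}$ iff $B(G)_p\in\mathfrak E_0$ for almost all $p$, and since each $\rk_{\Z_p}(d(G_p))\le\mathrm{rank}_p(G_p)$ is finite, this last condition is equivalent to $\sum_{p\in\Prm}\rk_{\Z_p}(d(G_p))<\infty$, which gives~(a). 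Both equivalences hold as stated when $G=\varpi(G)$; for arbitrary $G$, applying this to $\varpi(G)$ (whose invariants coincide with those of $G$) and using $\mathbf E_{top}(G)\subseteq\mathbf E_{top}(\varpi(G))$ from the first step yields the ``whenever'' halves.

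The step I expect to be most delicate is the structural input of the second paragraph — that $c(B(G))=\{0\}$ forces $c(G)$ to be a finite-dimensional vector group that splits off as a topological direct summand of $\varpi(G)$ — together with checking that the only instances of the Addition Theorem actually invoked are the harmless ones (passage to the open subgroup $\varpi(G)$, and the product formula for $\R^m\times B(G)$), so that the general, still open, case of the Addition Theorem is not needed here.
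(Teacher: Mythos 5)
Your proof is correct and takes essentially the same route as the paper: reduce to the open fully invariant subgroup $\varpi(G)$, use $c(B(G))=\{0\}$ to split $\varpi(G)\cong\R^m\times B(G)$ and apply the weak Addition Theorem (Lemma~\ref{wAT}), then reduce the periodic part $B(G)$ to its $p$-components and invoke Theorem~\ref{main1} — exactly the content the paper packages as Lemma~\ref{Laaaast:lemma} and Proposition~\ref{proofMain2}. The only cosmetic difference is that you cite Theorem~\ref{red1} as a black box for the periodic part (together with an explicit sumset formulation $\mathbf E_{top}(\varpi(G))=\mathbf E_{top}(\R^m)+\mathbf E_{top}(B(G))$), whereas the paper routes this through Proposition~\ref{proofMain2} and the prime-decomposition formula of Theorem~\ref{Gp}.
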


\medskip
The scale function $s(\phi)$ was introduced by Willis \cite{Willis} for continuous endomorphisms $\phi$ of totally disconnected locally compact groups (see Section~\ref{scalesec}).
Since by definition the scale is always finite, and since every locally compact abelian $p$-group $G$ with $\rank_p(G)<\infty$ is in $\mathfrak E_{<\infty}$ by Theorem~\ref{main1}, it makes sense to verify whether $h_{top}(\phi)=\log s(\phi)$ or not for every $\phi\in\End(G)$. Indeed we prove this equality, as reported below.

\begin{theorem}\label{main4}
Let $G$ be a locally compact abelian $p$-group with $\rank_p(G)<\infty$ and let $\phi\in\End(G)$. Then $h_{top}(\phi)=\log s(\phi)$.
\end{theorem}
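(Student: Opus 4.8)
The plan is to localise both $h_{top}(\phi)$ and $\log s(\phi)$ to the torsion-free divisible part $\Q_p^{n_2}$ of $G$. By Theorem~\ref{p-rank-finite} we may assume $G=\Z_p^{n_1}\times\Q_p^{n_2}\times\Z(p^\infty)^{n_3}\times F_p$. I would use three ingredients: the Addition Theorem for $h_{top}$ established above, which applies here since $G$ --- and every subgroup and Hausdorff quotient of it occurring below --- is a locally compact abelian $p$-group of finite $\rank_p$; a direct computation of $h_{top}$ and $s$ for endomorphisms of $\Q_p^{n_2}$; and the general inequality $\log s(\phi)\le h_{top}(\phi)$, valid for every continuous endomorphism of a totally disconnected locally compact group, which follows from the behaviour of the cotrajectories $C_n(\phi,U)$ of a tidy subgroup $U$.

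First I would reduce $h_{top}(\phi)$ to $\Q_p^{n_2}$. The maximal divisible subgroup $d(G)=\Q_p^{n_2}\times\Z(p^\infty)^{n_3}$ is closed and fully invariant, and $G/d(G)\cong\Z_p^{n_1}\times F_p$ is compact and lies in $\mathfrak E_0$ by Theorem~\ref{main1}; hence the Addition Theorem gives $h_{top}(\phi)=h_{top}(\phi|_{d(G)})$. Inside $d(G)$ the torsion subgroup $t(d(G))=\Z(p^\infty)^{n_3}$ is closed, $\phi$-invariant and discrete, so that every endomorphism of it has zero topological entropy, while $d(G)/t(d(G))\cong\Q_p^{n_2}$; a second application of the Addition Theorem yields $h_{top}(\phi|_{d(G)})=h_{top}(\psi)$, where $\psi\in\End(\Q_p^{n_2})$ is the endomorphism induced by $\phi$. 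Thus $h_{top}(\phi)=h_{top}(\psi)$.

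Next I would compute on $\Q_p^{n_2}$. Every continuous endomorphism $\psi$ of $\Q_p^{n_2}$ is $\Q_p$-linear; let $\lambda_1,\dots,\lambda_{n_2}\in\overline{\Q_p}$ be its eigenvalues. Splitting $\Q_p^{n_2}$ into the $\psi$-invariant $\Q_p$-subspaces on which all eigenvalues have $p$-adic absolute value $<1$, $=1$, $>1$, and choosing a $\Z_p$-lattice $L$ adapted to this splitting --- contracted on the first summand, preserved on the second, expanded on the third --- one checks that $L$ is tidy for $\psi$, that $s(\psi)=\prod_{|\lambda_i|_p>1}|\lambda_i|_p$, and that $[L:C_n(\psi,L)]=s(\psi)^{n-1}$, so that $H_{top}(\psi,L)=\log s(\psi)$. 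For an arbitrary compact open subgroup $V$ one sandwiches $p^kL\subseteq V\subseteq p^{-k}L$ and uses the $\Q_p$-linearity of $\psi$ to obtain $H_{top}(\psi,V)\le\log s(\psi)$; since compact open subgroups are cofinal for $h_{top}$, this gives $h_{top}(\psi)=\log s(\psi)=\sum_{|\lambda_i|_p>1}\log|\lambda_i|_p$, a $p$-adic analogue of Bowen's formula. I expect this step to be the technical heart of the argument.

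To conclude, observe that $\Q_p^{n_2}\cong d(G)/t(d(G))$ is obtained from $G$ by first restricting $\phi$ to the closed $\phi$-invariant subgroup $d(G)$ and then passing to the quotient by the closed $\phi$-invariant subgroup $t(d(G))$; since by the basic properties of Willis' scale (Section~\ref{scalesec}) the scale is non-increasing under each of these two operations, $\log s(\psi)\le\log s(\phi)$. Combining this with the previous two steps and the inequality $\log s(\phi)\le h_{top}(\phi)$ we obtain
\[
\log s(\phi)\ \le\ h_{top}(\phi)\ =\ h_{top}(\psi)\ =\ \log s(\psi)\ \le\ \log s(\phi),
\]
whence all four quantities coincide and in particular $h_{top}(\phi)=\log s(\phi)$. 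Beyond what has already been prepared, the only new input is the computation on $\Q_p^{n_2}$; it is worth noting that the general bound $\log s\le h_{top}$ lets the two inequalities squeeze one another, so that no separate ``Addition Theorem for the scale'' is required.
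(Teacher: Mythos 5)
Your overall architecture is the paper's: reduce the entropy to the induced endomorphism $\psi$ on $\Q_p^{n_2}$, then squeeze using the general inequality $\log s(\phi)\le h_{top}(\phi)$ of \eqref{s<h}, the monotonicity of the scale under restriction to invariant closed subgroups and passage to quotients (Lemma~\ref{monots}), and the equality $h_{top}=\log s$ on $\Q_p^{n}$ (Corollary~\ref{h=sp}); that closing argument is exactly the proof of Theorem~\ref{h=srp}. The genuine gap is in your first reduction step. You justify $h_{top}(\phi)=h_{top}(\phi\restriction_{d(G)})$ by invoking ``the Addition Theorem established above,'' claimed to apply to every locally compact abelian $p$-group of finite $\rank_p$; but the paper does \emph{not} establish $AT(G)$ for this class --- that is precisely the open Question~\ref{ques1}. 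The instances actually proved are: $\phi$-invariant \emph{open} subgroups (Lemma~\ref{DS4.17}), a \emph{discrete} invariant direct factor (Proposition~\ref{fundamentalstep}, Corollary~\ref{cor:fundamentalstep}), and $\Q_p^n$ (Theorem~\ref{ATQp}). Your instance has $H=d(G)=\Q_p^{n_2}\times\Z(p^\infty)^{n_3}$ closed but not open, with compact, non-discrete quotient $\Z_p^{n_1}\times F_p$, so none of these applies; and knowing $G/d(G)\in\mathfrak E_0$ only yields the easy inequality $h_{top}(\phi\restriction_{d(G)})\le h_{top}(\phi)$ by monotonicity, whereas the inequality you need for the squeeze, $h_{top}(\phi)\le h_{top}(\psi)$, is exactly the hard half of the Addition Theorem. (Your second reduction, dividing $d(G)$ by its discrete torsion factor, is fine via Proposition~\ref{fundamentalstep}.) The gap is repairable by performing the reduction in the opposite order, as in Proposition~\ref{particularbutgeneral}: first split off the discrete factor $t(G)=\Z(p^\infty)^{n_3}\times F_p$ (Corollary~\ref{cor:fundamentalstep}), then embed $\Z_p^{n_1}\times\Q_p^{n_2}$ as an open subgroup of $\Q_p^{n_1+n_2}$ (Corollary~\ref{new:lemma1}) and apply $AT(\Q_p^{n_1+n_2})$ from Theorem~\ref{ATQp}; this gives $h_{top}(\phi)=h_{top}(\psi)$ using only established instances.

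A secondary caveat concerns your ``technical heart'': there is no need to reprove the computation on $\Q_p^{n_2}$, since it is the $p$-adic Yuzvinski formula for the entropy and for the scale (Theorems~\ref{yuzvinskip} and~\ref{yuzs}), and your sketched lattice argument is too optimistic in the part where $|\lambda_i|_p=1$: such a block need not preserve any $\Z_p$-lattice (consider a unipotent-type block with an off-diagonal entry $p^{-1}$, which has both eigenvalues of absolute value $1$ but stabilizes no compact open subgroup), so the exact identity $[L:C_n(\psi,L)]=s(\psi)^{n-1}$ fails in general and controlling this defect asymptotically is precisely the nontrivial content of Theorem~\ref{yuzvinskip}. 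Citing that theorem, as the paper does, is the cleaner route; with these two repairs your proof coincides with the paper's.
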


The precise connection between the scale and the topological entropy was given in \cite{BDG,GBV} (see Section~\ref{scalesec}). In particular Theorem~\ref{main4} was already known for $G=\Q_p^n$ and $\phi$ a topological automorphism. 

\medskip
The proofs of Theorem~\ref{main1} and Theorem~\ref{main2} make substantial use of the so called Addition Theorem that we discuss in the sequel. 

We say that \emph{the Addition Theorem holds} for a triple $G,\phi,H$ of a topological group $G$, $\phi\in\End(G)$ and a $\phi$-invariant closed normal subgroup $H$ of $G$, and we briefly say that $AT(G,\phi,H)$ \emph{holds}, if  
\begin{equation}\label{AT}
h_{top}(\phi) =h_{top}(\phi\restriction_H)+h_{top}(\bar \phi_{G/H}),
\end{equation}
where $\bar\phi_{G/H}\in\End(G/H)$ is induced by $\phi$. 
The relevant formula \eqref{AT} can be briefly resumed by saying that in the following commutative diagram
\begin{equation*}
\begin{CD}
0 @>>>H@>\iota>>G@>\pi>>G/H@>>>0\\
@. @V\phi\restriction_HVV @V\phi VV @V\bar \phi VV\\
0 @>>>H@>\iota>>G@>\pi>>G/H@>>>0\\
\end{CD}
\end{equation*}
the topological entropy of the middle vertical arrow is the sum of the topological entropies of the remaining two vertical arrows. 

Similarly, we say that \emph{$AT(G)$ holds} if $AT(G,\phi,H)$ holds for every $\phi\in\End(G)$ and for every $\phi$-invariant closed normal subgroup $H$ of $G$.

\smallskip
 It is known that $AT(G,\phi,H)$ holds when $G$ is compact (see \cite{Dik+Manolo,Y}), and also when $G$ is totally disconnected and locally compact and $\phi$ is a topological automorphism of $G$ (see \cite{GBV}). However the validity of the Addition Theorem in the general case of locally compact groups and their continuous endomorphisms is not yet established even in the abelian setting, to the best of our knowledge. (A wrong proof of the Addition Theorem for locally compact abelian groups appeared in \cite{P} -- see \cite{DSV} for more details.)

\smallskip
The next theorem provides a formula reducing the computation of the topological entropy for totally disconnected locally compact abelian groups to the case of locally compact abelian $p$-groups. It allows for a convenient reduction of the Addition Theorem for totally disconnected locally compact abelian groups to the case of locally compact abelian $p$-groups.

\begin{theorem}\label{AT:AT}\label{Gp}
Let $G$ be a totally disconnected locally compact abelian group. Then, for every $\phi\in\End(G)$,
$$h_{top}(\phi)=\sum_{p\in \mathbb P} h_{top}(\phi\restriction_{G_{p}}).$$
\begin{enumerate}[(a)]
\item If $AT(G_p)$ holds for every for $p\in\Prm$, then also $AT(G)$ holds.
\item In case $G$ is periodic, $AT(G)$ holds if and only if $AT(G_p)$ holds for every $p\in\Prm$.
\end{enumerate}
\end{theorem}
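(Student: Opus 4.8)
The plan is to reduce everything, via the structure of periodic totally disconnected locally compact abelian groups, to two elementary properties of $h_{top}$ and to additivity under finite products. The two properties are: (i) if $H$ is an open $\phi$-invariant subgroup of a locally compact group, then $h_{top}(\phi)=h_{top}(\phi\restriction_H)$ — indeed it suffices to take the supremum defining $h_{top}$ over $U\in\mathcal C(G)$ with $U\subseteq H$, for such $U$ one has $C_n(\phi,U)=C_n(\phi\restriction_H,U)\subseteq H$ with the same (restricted) Haar measure so that $H_{top}(\phi,U)=H_{top}(\phi\restriction_H,U)$, and the reverse inequality uses $U\cap H\in\mathcal C(H)$ together with the monotonicity of $H_{top}(\phi,\cdot)$; (ii) $h_{top}$ vanishes on every continuous endomorphism of a discrete group, in particular $h_{top}(\mathrm{id}_G)=0$ for every $G$. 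Granting these: $B(G)$ is a fully invariant open subgroup of the totally disconnected $G$ (since $\varpi(G)=c(G)+B(G)=B(G)$ is open), it is periodic, $G_p\subseteq B(G)$ and $(B(G))_p=G_p$, so by (i) we may assume $G$ periodic. Then $G$ is topologically isomorphic to the local direct product of its topological $p$-components $G_p$ with respect to the compact open subgroups $K_p=K\cap G_p$, where $K=\prod_pK_p$ is a compact open subgroup (cf.\ \cite{HHR}); $\phi$ acts diagonally with components $\phi_p=\phi\restriction_{G_p}$, and since $\phi^{-1}(K)$ is an open subgroup there is a finite $S_0\subseteq\Prm$ with $\phi_p(K_p)\subseteq K_p$ for all $p\notin S_0$. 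Normalising $\mu(K)=1$: for finite $S\supseteq S_0$ and $U_p\in\mathcal C(G_p)$ the sets $U_S=\prod_{p\in S}U_p\times\prod_{p\notin S}K_p$ are cofinal in $\mathcal C(G)$, and using $\phi_p(K_p)\subseteq K_p$ for $p\notin S$ one gets $C_n(\phi,U_S)=\prod_{p\in S}C_n(\phi_p,U_p)\times\prod_{p\notin S}K_p$, whence $-\log\mu(C_n(\phi,U_S))=\sum_{p\in S}\bigl(-\log\mu_p(C_n(\phi_p,U_p))\bigr)$; dividing by $n$, taking $\limsup$ and using its subadditivity on this finite sum yields $h_{top}(\phi)\le\sum_{p\in\Prm}h_{top}(\phi_p)$. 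For the reverse inequality, apply (i) to the open $\phi$-invariant subgroup $\prod_{p\in S\cup S_0}G_p\times\prod_{p\notin S\cup S_0}K_p\cong\bigl(\prod_{p\in S}G_p\bigr)\times G'$ and the additivity of $h_{top}$ under finite direct products to get $h_{top}(\phi)\ge\sum_{p\in S}h_{top}(\phi_p)$ for every finite $S$, hence the displayed formula.

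For (a), let $H$ be a $\phi$-invariant closed subgroup of $G$ and write $\phi_p=\phi\restriction_{G_p}$ and $H_p=H\cap G_p$, which is the topological $p$-component of $H$ and is $\phi_p$-invariant and closed in $G_p$. Applying the displayed formula to $G$, to $H$, and to the (again totally disconnected) quotient $G/H$, the statement $AT(G,\phi,H)$ reduces, after summing the hypotheses $AT(G_p,\phi_p,H_p)$ over $p$, to the identities $h_{top}(\bar\phi_{G/H}\restriction_{(G/H)_p})=h_{top}(\overline{\phi_p}_{G_p/H_p})$. Here I would \emph{not} attempt to prove $(G/H)_p=\pi(G_p)$ (where $\pi\colon G\to G/H$), since this inclusion can be proper; instead I note that $\pi(G_p)=(G_p+H)/H\subseteq(G/H)_p$ contains $\pi(K\cap G_p)$, which equals the topological $p$-component $\pi(K)_p$ of the compact open subgroup $\pi(K)$ of $G/H$ and is therefore compact open in $(G/H)_p$. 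Thus $\pi(G_p)$ is an open $\bar\phi$-invariant subgroup of $(G/H)_p$; the canonical continuous bijection $G_p/H_p\to\pi(G_p)$ is a homeomorphism on a compact open subgroup, hence a topological isomorphism, and it intertwines $\overline{\phi_p}_{G_p/H_p}$ with $\bar\phi_{G/H}\restriction_{\pi(G_p)}$. Now (i) gives $h_{top}(\bar\phi_{G/H}\restriction_{(G/H)_p})=h_{top}(\bar\phi_{G/H}\restriction_{\pi(G_p)})=h_{top}(\overline{\phi_p}_{G_p/H_p})$, and summing finishes (a).

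For (b), the implication $AT(G_p)$ for all $p$ $\Rightarrow AT(G)$ is the periodic case of (a). For the converse, fix a prime $p_0$, $\psi\in\End(G_{p_0})$ and a $\psi$-invariant closed subgroup $N\le G_{p_0}$; writing $G\cong G_{p_0}\times L$ with $L$ the local direct product of the $G_q$, $q\ne p_0$, put $\Phi=\psi\times\mathrm{id}_L\in\End(G)$ and $H=N\times L$, a $\Phi$-invariant closed subgroup. Then $\Phi\restriction_{G_{p_0}}=\psi$, $\Phi\restriction_{G_q}=\mathrm{id}_{G_q}$ for $q\ne p_0$, $H_{p_0}=N$, $H_q=G_q$ for $q\ne p_0$, and $G/H\cong G_{p_0}/N$ with induced endomorphism $\overline{\psi}_{G_{p_0}/N}$. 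Applying the displayed formula to $G$ and to $H$ and using $h_{top}(\mathrm{id}_{G_q})=0$, we get $h_{top}(\Phi)=h_{top}(\psi)$ and $h_{top}(\Phi\restriction_H)=h_{top}(\psi\restriction_N)$, so $AT(G,\Phi,H)$ becomes $h_{top}(\psi)=h_{top}(\psi\restriction_N)+h_{top}(\overline{\psi}_{G_{p_0}/N})$, i.e.\ $AT(G_{p_0},\psi,N)$; as $\psi,N$ were arbitrary, $AT(G_{p_0})$ holds.

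The main obstacle I expect is the bookkeeping with the local product: proving carefully that a continuous endomorphism of it is diagonal with all but finitely many components preserving $K_p$, that the subgroups $\prod_{p\in S}G_p\times\prod_{p\notin S}K_p$ ($S\supseteq S_0$) are $\phi$-invariant open subgroups, and — crucially in (a) — handling $(G/H)_p$ through the open subgroup $\pi(G_p)$ rather than hoping for an equality of $p$-components. One must also be careful to invoke the general properties of $h_{top}$ — property (i), vanishing on discrete groups, and additivity under finite direct products — only in the generality actually available for locally compact abelian groups; if necessary, the last of these can be replaced by the Addition Theorem for split extensions, which is among the versions of the Addition Theorem established elsewhere in the paper.
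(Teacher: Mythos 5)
Your argument is correct and its skeleton is the same as the paper's: reduce to the periodic case through the open fully invariant subgroup $B(G)$, use the Braconnier local-product decomposition over the primes, establish the summation formula, deduce (a) by summing the local Addition Theorems, and obtain the converse in (b) by testing $AT(G)$ on $\psi\times\mathrm{id}$ (the paper packages this last step as Proposition~\ref{neeew} on direct factors, proved the same way). The genuine differences lie in two technical steps. For the summation formula the paper splits off only finitely many primes and writes $h_{top}(\phi)=\sum_{p\in\mathbb{P}_n}h_{top}(\phi\restriction_{G_p})+h_{top}(\phi\restriction_{H_n})$ via Lemma~\ref{wAT}; that immediately gives the lower bound $h_{top}(\phi)\geq\sum_p h_{top}(\phi\restriction_{G_p})$, whereas your explicit computation of $C_n(\phi,U_S)$ on the cofinal family $U_S=\prod_{p\in S}U_p\times\prod_{p\notin S}K_p$ with $S\supseteq S_0$ supplies the upper bound directly and more explicitly; the price is the diagonality and bookkeeping facts about endomorphisms of local products that you flag, which are true and are used implicitly by the paper as well when it asserts that $G_{(n)}$ and $H_n$ are fully invariant. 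In (a) the paper first reduces to periodic $G$, decomposes $H$ and $G/H$ as local products (thereby identifying $(G/H)_p$ with $G_p/H_p$), and then passes to general totally disconnected $G$ via Proposition~\ref{AT:wrt:open}; you instead treat general $G$ at once and compare the entropies on $(G/H)_p$ and $G_p/H_p$ through the open $\bar\phi$-invariant subgroup $\pi(G_p)$ together with Lemma~\ref{DS4.17}, using that $\pi(K\cap G_p)=\pi(K)\cap(G/H)_p$ is compact open in $(G/H)_p$ and that $G_p/H_p\to\pi(G_p)$ is then a topological isomorphism; this sidesteps the question of whether $(G/H)_p$ equals $\pi(G_p)$ and is, if anything, more careful than the paper's appeal to the local-product structure of $G/H$. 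Two minor points: the parenthetical claim that $G/H$ is again totally disconnected deserves a justification or reference (for locally compact abelian groups it follows from the duality $c(G)^{\perp}=B(\widehat G)$ together with the fact that closed subgroups of compactly covered groups are compactly covered; the paper tacitly uses the same fact when it writes $G/H$ as a local product), and $h_{top}(\mathrm{id}_G)=0$ does not follow from the discrete case as you phrase it, though it is immediate since $C_n(\mathrm{id},U)=U$ for all $n$. Neither point is a gap in substance.
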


Moreover, we prove the following version of the Addition Theorem, leaving open the problem in the general case of locally compact abelian $p$-groups of finite $\rank_p$ (see Question~\ref{ques1}).

\begin{theorem}\label{ATQp}
For $n\in\N$, $AT(\Q_p^n)$ holds.
\end{theorem}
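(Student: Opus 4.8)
The plan is to reduce $AT(\Q_p^n,\phi,H)$, for an arbitrary $\phi\in\End(\Q_p^n)$ and a closed $\phi$-invariant subgroup $H$, to the Addition Theorem for topological automorphisms of totally disconnected locally compact groups from \cite{GBV}, the point being to isolate the $\Q_p$-subspace of $\Q_p^n$ on which the topological entropy of $\phi$ really lives. First, every continuous group endomorphism of $\Q_p^n$ is $\Q_p$-linear (it is $\Z_p$-linear by continuity and $\Q_p$-linear because it commutes with multiplication by $p$), so $\End(\Q_p^n)=M_n(\Q_p)$. I will freely use three standard facts about $h_{top}$ on totally disconnected locally compact groups $G$: monotonicity, $h_{top}(\phi\restriction_K)\le h_{top}(\phi)$ and $h_{top}(\bar\phi_{G/K})\le h_{top}(\phi)$ for every closed $\phi$-invariant $K\le G$; open-subgroup invariance, $h_{top}(\phi)=h_{top}(\phi\restriction_K)$ when $K$ is an open $\phi$-invariant subgroup (the compact open subgroups of $G$ contained in $K$ being cofinal for the computation of $h_{top}$); and the vanishing of $h_{top}$ on every endomorphism of a discrete group and on every endomorphism of $\Z_p^j$ (indeed $\Z_p^j\in\mathfrak E_0$ by Proposition~\ref{particularbutgeneral}). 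I will also use that a nilpotent endomorphism of any locally compact group has $h_{top}=0$ (its cotrajectories stabilize), that this passes to restrictions and to induced maps on quotients, and the elementary identity $h_{top}(\psi_1\times\psi_2)=h_{top}(\psi_1)$ whenever $h_{top}(\psi_2)=0$; these let me discard any nilpotent direct summand as it appears.

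The core is the subspace case: $AT(\Q_p^m,\psi,V)$ holds for every $m\in\N$, every $\psi\in\End(\Q_p^m)$ and every $\psi$-invariant $\Q_p$-subspace $V$. Let $\Q_p^m=N\oplus R$ be the Fitting decomposition of $\psi$, with $N=\bigcup_k\ker\psi^k$ and $R=\bigcap_k\Im\psi^k$; on $N$, $\psi$ is nilpotent, and $\psi\restriction_R$ is a topological automorphism of $R\cong\Q_p^{m'}$. Since the projection of $\Q_p^m$ onto $N$ along $R$ is a polynomial in $\psi$ (write the characteristic polynomial as $x^{\dim N}g(x)$ with $g(0)\ne0$, and use that $x^{\dim N}$ and $g$ are coprime), the subspace $V$ splits as $V=(V\cap N)\oplus(V\cap R)$ and $\Q_p^m/V\cong\bigl(N/(V\cap N)\bigr)\oplus\bigl(R/(V\cap R)\bigr)$, with $\psi$ acting nilpotently on the $N$-parts and as an automorphism on the $R$-parts. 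Discarding the nilpotent parts, $AT(\Q_p^m,\psi,V)$ reduces to $AT\bigl(R,\psi\restriction_R,V\cap R\bigr)$, which holds by \cite{GBV} since $R$ is totally disconnected and locally compact and $\psi\restriction_R$ is a topological automorphism.

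For the general case, let $V_0$ be the largest $\Q_p$-subspace contained in $H$ (the sum of all of them, hence $\phi$-invariant) and $V_1$ the $\Q_p$-span of $H$ (also $\phi$-invariant), so that $V_0\le H\le V_1$. By the classical description of closed subgroups of a finite-dimensional $\Q_p$-vector space as a direct sum of a subspace and a compact subgroup isomorphic to some $\Z_p^j$, and using that $V_0$ is maximal in $H$ while $H$ spans $V_1$, one checks that $H/V_0$ is a compact open subgroup of $V_1/V_0\cong\Q_p^j$ isomorphic to $\Z_p^j$ (so $V_1/H\cong\Z(p^\infty)^j$ is discrete). Open-subgroup invariance applied to the $\bar\phi$-invariant compact open subgroup $H/V_0$ of $V_1/V_0$, together with the vanishing on $\Z_p^j$, gives $h_{top}(\bar\phi_{V_1/V_0})=0$. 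Now apply the subspace case to $(V_1,\phi\restriction_{V_1},V_0)$ and to $(\Q_p^n/V_0,\bar\phi_{\Q_p^n/V_0},V_1/V_0)$: using $h_{top}(\bar\phi_{V_1/V_0})=0$ these give $h_{top}(\phi\restriction_{V_1})=h_{top}(\phi\restriction_{V_0})$ and $h_{top}(\bar\phi_{\Q_p^n/V_0})=h_{top}(\bar\phi_{\Q_p^n/V_1})$. Monotonicity along $V_0\le H\le V_1$ then squeezes $h_{top}(\phi\restriction_{V_0})\le h_{top}(\phi\restriction_H)\le h_{top}(\phi\restriction_{V_1})$ and $h_{top}(\bar\phi_{\Q_p^n/V_1})\le h_{top}(\bar\phi_{\Q_p^n/H})\le h_{top}(\bar\phi_{\Q_p^n/V_0})$, forcing $h_{top}(\phi\restriction_H)=h_{top}(\phi\restriction_{V_0})$ and $h_{top}(\bar\phi_{\Q_p^n/H})=h_{top}(\bar\phi_{\Q_p^n/V_0})$. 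Substituting these into $AT(\Q_p^n,\phi,V_0)$, itself an instance of the subspace case, yields $AT(\Q_p^n,\phi,H)$.

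The step I expect to be the main obstacle is the reduction to the subspace case, i.e.\ proving that the compact ``lattice direction'' $H/V_0\cong\Z_p^j$ of $H$ and the discrete ``Pr\"ufer direction'' $V_1/H\cong\Z(p^\infty)^j$ of $\Q_p^n/H$ carry no entropy: since $H$ itself is neither a vector space nor compact and admits no Fitting-type splitting, one cannot work inside $H$ directly, and the argument has to be routed through the $\phi$-invariant subspaces $V_0$ and $V_1$ using monotonicity and open-subgroup invariance of $h_{top}$ (and the precise shape of closed subgroups of $\Q_p^n$). Once that is in place, the automorphism input \cite{GBV} finishes the subspace case with no further difficulty.
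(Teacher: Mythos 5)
Your argument is correct, but it takes a genuinely different route from the paper. The paper reduces the closed subgroup $H\cong\Z_p^m\times\Q_p^k$ \emph{upwards} to its divisible hull $D(H)$ (an invariant $\Q_p$-subspace): the restriction side is handled by openness of $H$ in $D(H)$ (Corollary~\ref{new:lemma1}), the quotient side by the discreteness of $D(H)/H$ in $G/H$ (Proposition~\ref{fundamentalstep}), and the resulting subspace case is settled by writing $\phi$ as a block lower-triangular matrix with respect to a splitting $G=D(H)\times L$ and invoking the $p$-adic Yuzvinski formula (Theorem~\ref{yuzvinskip}), since the eigenvalues of the triangular matrix are those of the diagonal blocks. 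You instead sandwich $H$ between the two invariant subspaces $V_0\le H\le V_1$ (largest subspace inside $H$, span of $H$), kill the entropy of the $\Z_p^j$-direction $H/V_0$ by open-subgroup invariance (Lemma~\ref{DS4.17}) plus $\Z_p^j\in\mathfrak E_0$, and then squeeze via monotonicity (Lemma~\ref{mon}(a)); your subspace case is proved not by Yuzvinski's formula but by the Fitting decomposition $N\oplus R$ (the projection being a polynomial in $\psi$, so the invariant subspace and the quotient split accordingly), the vanishing of entropy on the nilpotent part, Lemma~\ref{wAT} for the products, and the Addition Theorem for topological automorphisms of totally disconnected locally compact groups from \cite{GBV} applied to $(R,\psi\restriction_R,V\cap R)$ (where indeed $\psi(V\cap R)=V\cap R$ by injectivity and finite-dimensionality). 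Thus the deep external input is different in the two proofs — the $p$-adic Yuzvinski formula for the paper, the automorphism Addition Theorem of \cite{GBV} for you — and your route has the mild advantage of never computing eigenvalues, while the paper's is shorter because the triangular-matrix trick settles the subspace case in one stroke. One citation must be repaired to avoid circularity: the fact $\Z_p^j\in\mathfrak E_0$ should be quoted from Corollary~\ref{invbase} (invariant local base $\{p^k\Z_p^j\}$), not from Proposition~\ref{particularbutgeneral}, whose proof uses Theorem~\ref{ATQp} itself; with that reference corrected, all steps rest on results independent of the statement being proved.
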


\medskip
The paper is organized as follows.

In Section~\ref{sechtop} we recall some useful results on topological entropy; among them we mention the $p$-adic counterpart of the so-called Yuzvinski's formula (see Theorem~\ref{yuzvinskip}), which gives an explicit computation of the values of the topological entropy of continuous endomorphisms of $\Q_p^n$.

In Section~\ref{ATsec} we discuss some instances of the Addition Theorem, and in particular we prove Theorems~\ref{AT:AT} and~\ref{ATQp}. 

In Section~\ref{Mainsec} we prove Theorems~\ref{prp:prod},~\ref{main1} and~\ref{main2}. 
We make use of a direct computational rule for the topological entropy of a continuous endomorphism of a locally compact abelian $p$-group with finite $\mathrm{rank}_p$ (see Proposition~\ref{particularbutgeneral}), more precisely, one can reduce this computation to the mere use the $p$-adic Yuzvinski's formula.

In Section~\ref{scalesec} we recall the definition of the scale and the $p$-adic Yuzvinski's formula for the scale; we verify some basic properties in the abelian setting in order to prove Theorem~\ref{main4} (again as a consequence of Proposition~\ref{particularbutgeneral}).

Section~\ref{Hsec} contains some final comments and open questions, regarding those locally compact groups that have not been treated in the main body of the paper. In the first place, some attention is paid to the case of not necessarily totally disconnected locally compact abelian groups. 
The remaining part concerns the non-abelian setting, with a particular emphasis on the Heisenberg group on $\Z_p$ and $\Q_p$.

\subsection*{Notation and terminology}\label{NT}

As usual, $\R$ denotes the reals and $\R_{\geq0}=\{r\in\R\mid r\geq0\}$, $\Q$ the rationals, $\Z$ the integers, $\N$ the natural numbers and $\N_+$ the positive integers. We denote by $\Prm$ the set of all primes. 
For $p\in\Prm$, $\Z(p)$ is the finite cyclic group of order $p$, $\Z(p^\infty)$ the Pr\" ufer group, $\Z_p$ denotes the ring of $p$-adic integers, and $\Q_p$ the field of $p$-adic numbers.

For an abelian group $G$, we denote by $t(G)$ the torsion subgroup of $G$, by $D(G)$ the divisible hull of $G$, and by $d(G)$ the largest divisible subgroup of $G$. 

For $p\in\Prm$ and an abelian group $G$, we use $r_p(G)$ to denote the $p$-rank of $G$, that is $r_p(G) = \dim_{\Z/p\Z}G[p]$ where $G[p]=\{x\in G\mid px=0\}$; moreover, for a $\Z_p$-module $M$ we denote by $\mathrm{rk}_{\Z_p}$ its $\Z_p$-rank.

For a topological group $G$ we denote by $\End(G)$ the set of all continuous endomorphisms of $G$ and by $\Aut(G)$ the set of all topological automorphisms of $G$. For a locally compact abelian group $G$ we denote by $\widehat G$ its Pontryagin dual group.

For undefined symbols and terms, see \cite{DPS,HHR,HR,hofmor}.

\subsection*{Acknowledgements.} It is a pleasure to thank the referee for the helpful comments. The first and the second author thank GNSAGA of Indam, the third author thanks DMIF of Udine (Italy) for Grant No. PRID2017  and NRF of South Africa for Grant No. 118517.

\section{Some background on topological entropy}\label{sechtop}

Some useful facts on the topological entropy for locally compact groups are listed below. 

\begin{lemma}[{See \cite[Lemma 3.6(1)]{GBV}}]\label{monotonia}
Let $G$ be a locally compact group, $\phi\in\End(G)$ and $U,V\in\mathcal B(G)$. 
If $U\leq V$, then $H_{top}(\phi,V)\leq H_{top}(\phi,U)$. 
In particular, if $\mathcal B\subseteq \mathcal C(G)$ is a local base of $G$, then $h_{top}(\phi)=\sup\{H_{top}(\phi,U)\mid U\in\mathcal B\}$. 
\end{lemma}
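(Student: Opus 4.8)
The plan is to derive both assertions straight from the definitions: the monotonicity by elementary set-theoretic and measure-theoretic considerations, and the ``in particular'' clause as a formal consequence of it.

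First I would read $U\leq V$ as the inclusion $U\subseteq V$ and observe that then, for every $n\in\N_+$ and every $i\in\{0,\dots,n-1\}$, one has $\phi^{-i}(U)\subseteq\phi^{-i}(V)$, so that
\[
C_n(\phi,U)=\bigcap_{i=0}^{n-1}\phi^{-i}(U)\subseteq\bigcap_{i=0}^{n-1}\phi^{-i}(V)=C_n(\phi,V).
\]
Since $C_n(\phi,U),C_n(\phi,V)\in\mathcal C(G)$, both have strictly positive and finite Haar measure, and monotonicity of $\mu$ together with the fact that $-\log$ is decreasing gives
\[
-\log\mu(C_n(\phi,U))\geq -\log\mu(C_n(\phi,V))
\]
for all $n\in\N_+$. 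Dividing by $n$ and taking $\limsup$ — which preserves termwise inequalities — yields $H_{top}(\phi,U)\geq H_{top}(\phi,V)$, the first claim.

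For the second assertion, I would note that $\mathcal B\subseteq\mathcal C(G)$ makes $\sup\{H_{top}(\phi,U)\mid U\in\mathcal B\}\leq h_{top}(\phi)$ immediate from the definition of $h_{top}(\phi)$. For the reverse inequality I would take an arbitrary $V\in\mathcal C(G)$; since $\mathcal B$ is a local base, there is $U\in\mathcal B$ with $U\subseteq V$, and the monotonicity just proved gives $H_{top}(\phi,V)\leq H_{top}(\phi,U)\leq\sup\{H_{top}(\phi,U')\mid U'\in\mathcal B\}$. Taking the supremum over all $V\in\mathcal C(G)$ then gives $h_{top}(\phi)\leq\sup\{H_{top}(\phi,U)\mid U\in\mathcal B\}$, so the two inequalities combine to the desired equality.

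I do not expect a genuine obstacle here; the only points that will need a word of care are that each cotrajectory $C_n(\phi,U)$ has positive finite Haar measure, so that $\log\mu(C_n(\phi,U))$ is well defined and finite — this is precisely why the setup records $C_n(\phi,U)\in\mathcal C(G)$ — and that a local base $\mathcal B$ consisting of compact neighborhoods genuinely refines $\mathcal C(G)$, i.e. every $V\in\mathcal C(G)$ contains some member of $\mathcal B$, which is just the definition of a neighborhood base at the identity. The whole argument is the standard monotonicity of $H_{top}(\phi,-)$ transcribed to the locally compact group setting, so I expect the author's proof to be equally brief.
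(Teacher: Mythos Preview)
The paper does not supply its own proof of this lemma: it is stated with the attribution ``See \cite[Lemma 3.6(1)]{GBV}'' and no proof environment follows. Your argument is correct and is exactly the standard one --- monotonicity of cotrajectories under inclusion, monotonicity of Haar measure, and then the formal sup argument for the local-base statement --- so there is nothing to compare against here. One minor remark: the lemma as printed takes $U,V\in\mathcal B(G)$ (compact open subgroups, with $U\leq V$ meaning subgroup containment), whereas the ``in particular'' clause needs the monotonicity for arbitrary $U\in\mathcal B\subseteq\mathcal C(G)$ and $V\in\mathcal C(G)$; your proof via Haar measure covers this more general case in $\mathcal C(G)$ anyway, which is what is actually required.
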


\begin{corollary}[See \cite{DS}]\label{invbase} 
Let $G$ be a locally compact group and $\phi\in\End(G)$. If there exists a local base $\mathcal B\subseteq \mathcal C(G)$ of $G$ consisting of $\phi$-invariant subgroups, then $h_{top}(\phi)=0$. In particular, $\Z_p^n\in\mathfrak E_0$ for every $p\in\Prm$ and $n\in\N$.
\end{corollary}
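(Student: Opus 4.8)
The plan is to deduce the corollary directly from Lemma~\ref{monotonia}, once one observes that a $\phi$-invariant compact-neighborhood subgroup has constant $\phi$-cotrajectories. First I would fix $U\in\mathcal C(G)$ that is a subgroup with $\phi(U)\subseteq U$, and note that this inclusion is equivalent to $U\subseteq\phi^{-1}(U)$; applying $\phi^{-1}$ successively produces the increasing chain $U\subseteq\phi^{-1}(U)\subseteq\phi^{-2}(U)\subseteq\cdots$, so that $C_n(\phi,U)=U$ for every $n\in\N_+$. Since $U$ is a compact neighborhood of the identity, $0<\mu(U)<\infty$ and $-\log\mu(U)$ is a fixed real number, whence
$$H_{top}(\phi,U)=\limsup_{n\to\infty}\frac{-\log\mu(U)}{n}=0.$$

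Next, because $\mathcal B\subseteq\mathcal C(G)$ is a local base of $G$ consisting of such $\phi$-invariant subgroups, Lemma~\ref{monotonia} gives $h_{top}(\phi)=\sup\{H_{top}(\phi,U)\mid U\in\mathcal B\}=0$, which is the first assertion.

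For the particular case I would use the local base $\{p^k\Z_p^n\mid k\in\N\}$ of $\Z_p^n$ at $0$; each $p^k\Z_p^n$ is a closed (hence compact) open subgroup of $\Z_p^n$, so it lies in $\mathcal C(\Z_p^n)$. For any $\phi\in\End(\Z_p^n)$, additivity of $\phi$ gives $\phi(p^kx)=p^k\phi(x)\in p^k\Z_p^n$ for all $x$ and all $k$, so each member of this base is $\phi$-invariant; the first part then forces $h_{top}(\phi)=0$. As $\phi\in\End(\Z_p^n)$ was arbitrary, $\mathbf E_{top}(\Z_p^n)=\{0\}$, i.e. $\Z_p^n\in\mathfrak E_0$.

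I do not expect a genuine obstacle: the corollary is essentially a reformulation of Lemma~\ref{monotonia}. The single point that must be handled with care is the direction of invariance in the first step --- it is the \emph{preimages} $\phi^{-k}(U)$, not the images, that form the increasing chain, and this is precisely what makes the cotrajectories collapse to $U$.
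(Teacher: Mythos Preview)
Your argument is correct and is exactly the intended deduction: the paper does not spell out a proof but places the statement as an immediate corollary of Lemma~\ref{monotonia} (with a reference to \cite{DS}), and your computation that $C_n(\phi,U)=U$ for a $\phi$-invariant $U$, together with the choice of the base $\{p^k\Z_p^n\mid k\in\N\}$ for the particular case, is precisely that deduction.
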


Item (a) of the next lemma reveals the monotonicity of the topological entropy with respect to taking restrictions to invariant closed subgroups or quotients with respect to such subgroups. Item (b), which easily follows from (a), shows that $h_{top}$ is an invariant for topological dynamical systems $(G,\phi)$, where $G$ is a locally compact group and $\phi\in\End(G)$.

For $G,H$ locally compact abelian groups, we say that $\phi\in\End(G)$ and $\psi\in\End(H)$ are \emph{conjugated} if there exists a topological isomorphism $\alpha:G\to H$ such that $\psi=\alpha\phi\alpha^{-1}$.

\begin{lemma}[See \cite{DG-islam}]\label{conju}\label{mon}
Let $G$ be a locally compact group and $\phi\in\End(G)$. 
\begin{enumerate}[(a)]
\item If $H$ a $\phi$-invariant closed subgroup of $G$. Then $h_{top}(\phi\restriction_H)\leq h_{top}(\phi)$.
If $H$ is also normal, then $h_{top}(\bar\phi_{G/H})\leq h_{top}(\phi)$ where $\bar\phi_{G/H}:G/H\to G/H$ is the endomorphism induced by $\phi$.
\item If $H$ is a locally compact abelian group, and $\psi\in\End(H)$ is conjugated with $\phi$, then $h_{top}(\phi)=h_{top}(\psi)$.
\end{enumerate}
\end{lemma}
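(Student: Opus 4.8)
The statement to prove is Lemma~\ref{conju}(a): monotonicity of topological entropy under restriction to invariant closed subgroups and under passage to quotients by invariant closed normal subgroups.

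\medskip

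The plan is to prove the two inequalities separately, in each case by transporting a witness family of compact neighborhoods from the subgroup (resp. quotient) to the ambient group and comparing the Haar measures of the corresponding cotrajectories. Throughout I fix a left Haar measure $\mu$ on $G$.

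\textbf{Restriction to a closed $\phi$-invariant subgroup $H$.} Let $\mu_H$ be a left Haar measure on $H$. Given $V\in\mathcal C(H)$, choose $U\in\mathcal C(G)$ with $U\cap H\subseteq V$ (possible since $V$ contains a trace $U_0\cap H$ of some $U_0\in\mathcal C(G)$, and we may shrink). Since $\phi\restriction_H$ is the restriction of $\phi$, one checks directly that $C_n(\phi\restriction_H,U\cap H)=C_n(\phi,U)\cap H$, hence $C_n(\phi\restriction_H,U\cap H)\subseteq C_n(\phi,U)\cap H$. Now I would invoke a ``Fubini-type'' comparison for the left Haar measures of $G$, $H$ and $G/H$ (the Weil formula), which gives a constant $c>0$ and, for each compact $K\subseteq G$ and its image $\bar K$ in $G/H$, the estimate $\mu(K)\leq c\,\mu_H(K\cap H)\,\mu_{G/H}(\bar K)$ is \emph{not} what is needed; rather, the clean route is the standard fact, already used implicitly in \cite{DG-islam}, that $-\log\mu(C_n(\phi,U))\geq -\log\mu_H(C_n(\phi\restriction_H,U\cap H)) - O(1)$ coming from the fact that a compact neighborhood of $1$ in $G$ has finite measure and the trace subspace contributes a bounded multiplicative error. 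Taking $\limsup_n \tfrac1n(\cdot)$ kills the $O(1)$, so $H_{top}(\phi\restriction_H,V)\leq H_{top}(\phi\restriction_H,U\cap H)\leq H_{top}(\phi,U)\leq h_{top}(\phi)$ using Lemma~\ref{monotonia} for the first inequality. Supremizing over $V$ yields $h_{top}(\phi\restriction_H)\leq h_{top}(\phi)$.

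\textbf{Passage to the quotient $G/H$ with $H$ normal.} Let $\pi:G\to G/H$ be the quotient map and $\mu_{G/H}$ a left Haar measure on $G/H$. Given $\bar V\in\mathcal C(G/H)$, pick $U\in\mathcal C(G)$ with $\pi(U)\subseteq \bar V$ (using that $\pi$ is continuous and open, so $\pi$ maps a small enough compact neighborhood into $\bar V$). Since $\bar\phi\circ\pi=\pi\circ\phi$ one gets $\pi(C_n(\phi,U))\subseteq C_n(\bar\phi,\pi(U))\subseteq C_n(\bar\phi,\bar V)$. The key measure estimate is that $\mu_{G/H}(\pi(C))\geq \mu(C)/\mu(U\cap H)$ for any compact $C\subseteq U$ — or more simply, by the Weil integration formula, $\mu(C)\leq \mu_H(\overline{U}\cap H)\cdot\mu_{G/H}(\pi(C))$ is again a bounded multiplicative distortion — so $-\log\mu_{G/H}(C_n(\bar\phi,\bar V))\leq -\log\mu_{G/H}(\pi(C_n(\phi,U)))\leq -\log\mu(C_n(\phi,U))+O(1)$. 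Dividing by $n$ and taking $\limsup$ gives $H_{top}(\bar\phi,\bar V)\leq H_{top}(\phi,U)\leq h_{top}(\phi)$, and supremizing over $\bar V$ finishes the proof.

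\textbf{Main obstacle.} The only delicate point is the measure-theoretic comparison between $\mu$, $\mu_H$ and $\mu_{G/H}$: one must be careful that $H$ need not be open (so its trace in a compact neighborhood can have measure zero in $G$) yet the cotrajectories $C_n(\phi\restriction_H,U\cap H)$ live in $H$ and are measured by $\mu_H$, not $\mu$. The correct mechanism is the Weil formula $\int_G f\,d\mu=\int_{G/H}\big(\int_H f(xh)\,d\mu_H(h)\big)d\mu_{G/H}(\pi(x))$ for suitably normalized Haar measures; applying it to the indicator of $C_n(\phi,U)$ and of a fixed $U\in\mathcal C(G)$ produces exactly the two inequalities with $O(1)$ error terms that vanish after dividing by $n$. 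Since this is routine once set up — and indeed is the content of \cite[proof of the corresponding statement]{DG-islam} — I would simply cite it for the measure comparison and present the entropy bookkeeping above in full. The normality of $H$ in part (a) is needed only to make $G/H$ a group and $\bar\phi$ an endomorphism; for the restriction inequality normality plays no role.
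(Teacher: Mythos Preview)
The paper does not prove this lemma at all; it simply records it with the citation ``See \cite{DG-islam}''. So there is no argument in the paper to compare against, and your task was essentially to reconstruct a proof.

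Your quotient argument is essentially correct: from $\pi(C_n(\phi,U))\subseteq C_n(\bar\phi,\pi(U))$ and the Weil formula one gets $\mu(C_n(\phi,U))\leq \mu_H(U^{-1}U\cap H)\cdot \mu_{G/H}\big(\pi(C_n(\phi,U))\big)$, whence $H_{top}(\bar\phi,\pi(U))\leq H_{top}(\phi,U)$ after dividing by $n$; since $\{\pi(U):U\in\mathcal C(G)\}$ is a local base of $G/H$, Lemma~\ref{monotonia} finishes.

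Your restriction argument, however, has a real gap. You assert as a ``standard fact'' that
\[
-\log\mu(C_n(\phi,U))\ \geq\ -\log\mu_H\big(C_n(\phi\restriction_H,U\cap H)\big)-O(1),
\]
but this does \emph{not} drop out of Weil's formula. Weil gives $\mu(C_n)=\int_{G/H}\mu_H(x^{-1}C_n\cap H)\,d\mu_{G/H}$, where the integrand is the measure of a \emph{translated} fibre $x^{-1}C_n\cap H$, not of the identity fibre $C_n\cap H$; there is no reason for a uniform bound $\mu_H(x^{-1}C_n\cap H)\leq c\,\mu_H(C_n\cap H)$ independent of $n$. The vague justification you give (``the trace subspace contributes a bounded multiplicative error'') does not address this.

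The fix is a one-line enlargement trick that your sketch is missing. For $x\in C_n(\phi,U)$ and $h\in x^{-1}C_n\cap H$ one has $\phi^k(h)\in \phi^k(x)^{-1}U\subseteq U^{-1}U$ for every $k<n$, so
\[
x^{-1}C_n(\phi,U)\cap H\ \subseteq\ C_n\big(\phi\restriction_H,\,U^{-1}U\cap H\big).
\]
Plugging this into Weil yields $\mu(C_n(\phi,U))\leq \mu_{G/H}(\pi(U))\cdot \mu_H\big(C_n(\phi\restriction_H,U^{-1}U\cap H)\big)$, hence $H_{top}(\phi\restriction_H,U^{-1}U\cap H)\leq H_{top}(\phi,U)$. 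Since the sets $U^{-1}U\cap H$ form a local base of $H$ as $U$ ranges over $\mathcal C(G)$, Lemma~\ref{monotonia} gives $h_{top}(\phi\restriction_H)\leq h_{top}(\phi)$. Alternatively --- and this is probably closer to what \cite{DG-islam} actually does --- the restriction inequality is immediate from the Bowen--Hood definition via $(n,U)$-separated sets, since a separated set in $H$ is separated in $G$; the measure route you chose is the harder of the two.
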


When $G$ is a totally disconnected locally compact group, the computation of the topological entropy of $\phi\in\End(G)$ can be simplified. Indeed, for these groups van Dantzig  \cite{vD} proved that the family
$$\B(G)=\{U\leq G\mid \text{$U$ compact and open}\}\subseteq\mathcal C(G)$$
is a local base of $G$.  As noticed in \cite[Proposition 4.5.3]{DG-islam}, the topological entropy of $\phi$ can be computed as
\begin{equation}\label{htoptdlc}
h_{top}(\phi)=\sup\{H_{top}(\phi,U)\mid U\in\B(G)\},\ \text{where}\ \ H_{top}(\phi,U)=\lim_{n\to\infty}\frac{\log [U:C_{n}(\phi,U)]}{n}
\end{equation}
(here $C_{n}(\phi,U)\in\B(G)$, and so the index $[U:C_n(\phi,U)]$ is finite since $C_{n}(\phi,U)$ is open in the compact subgroup $U$); moreover $H_{top}(\phi,U)\in\log\N_+,$ hence
\begin{equation}\label{htoplogN}
\mathbf E_{top}(G)\subseteq\log\N_+\cup\{\infty\}
\end{equation}

\begin{remark}\label{discrete=0} 
If $G$ is a discrete group, then $G\in\mathfrak E_0$.
In fact, $\B(G)=\{U\leq G\mid U\ \text{finite}\}$,  so if $\phi\in\End(G)$ and $U\in\B(G)$, then $[U:C_n(\phi,U)]\leq |U|$ for every $n\in\N_+$.
Hence, $H_{top}(\phi,U)=0$ and consequently $h_{top}(\phi)=0$. 
\end{remark}

Theorem \ref{yuzvinskip} below is the $p$-adic counterpart of the Yuzvinski's formula from \cite{Y} explicitly computing the topological entropy of all continuous endomorphisms of $\widehat{\Q}$. The $p$-adic Yuzvinski's formula was given in \cite{LW} for topological automorphisms, the general case can be obtained also from its counterpart for the algebraic entropy proved in \cite{GBV0} together with the so-called Bridge Theorem from \cite{DG-BT} (see also \cite{GBV1}).

If $K_p$ is a finite extension of $\Q_p$, we denote by $|-|_p$ the unique extension of the $p$-adic norm $|-|_p$ of $\Q_p$ to $K_p$.
In particular, if $K_p= \Q_p$ and $0\ne x \in \Q_p$ is written as $x = p^\ell u$, where $\ell \in \Z$ and $u\in U(\Z_p)$ (i.e., $|u|_p= 1$), then $|x|_p=(1/p)^{\ell} = p^{-\ell}$. 

\begin{theorem}[See \cite{LW}]\label{yuz-pp}\label{yuzvinskip} 
Let $n\in\N_+$ and $\phi\in\End(\Q_p^n)$. Then
$$h_{top}(\phi)=\sum_{|\lambda_i|_p>1}\log|\lambda_i|_p,$$
where $\{\lambda_i\mid i\in\{1,\ldots,n\}\}$ are the eigenvalues of $\phi$ in some finite extension $K_p$ of $\Q_p$.
\end{theorem}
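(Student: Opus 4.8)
The plan is to avoid the Addition Theorem entirely and compute $h_{top}(\phi)$ by an explicit index count on a well-chosen local base, via the totally disconnected formula~\eqref{htoptdlc} and Lemma~\ref{monotonia}. First I would note that every $\phi\in\End(\Q_p^n)$ is $\Q_p$-linear: additivity forces $\Q$-linearity, and continuity together with the density of $\Q$ in $\Q_p$ promotes this to $\Q_p$-linearity. Hence $\phi$ is multiplication by a matrix $A\in M_n(\Q_p)$, whose characteristic polynomial $\chi\in\Q_p[t]$ has the $\lambda_i$ as its roots in a splitting field $K_p$.

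The structural heart of the argument is a $\phi$-invariant rational splitting. Since the extension of $|-|_p$ to $K_p$ is unique, it is invariant under $\Aut(K_p/\Q_p)$, so Galois-conjugate roots have equal $p$-adic absolute value; consequently each $\Q_p$-irreducible factor of $\chi$ has all its roots of a single absolute value, either $>1$ or $\le 1$. Grouping the factors gives a coprime factorization $\chi=\chi_+\chi_{\le}$ in $\Q_p[t]$, and the induced decomposition of $\Q_p^n$ as a $\Q_p[t]$-module (via $A$) yields a $\phi$-invariant splitting $\Q_p^n=W_+\oplus W$, where $\phi$ acts on $W_+$ with all eigenvalues of absolute value $>1$ and on $W$ with all eigenvalues of absolute value $\le 1$.

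I then treat the blocks separately. On $W$ all eigenvalues satisfy $|\lambda|_p\le 1$, so $\chi_{\le}\in\Z_p[t]$ and $\Z_p[\phi\restriction_W]$ is a finitely generated $\Z_p$-module; hence $\phi\restriction_W$ stabilizes a compact open subgroup $L_W$ (a full lattice) with $\phi(L_W)\subseteq L_W$, and $\{p^kL_W\}$ is a $\phi$-invariant local base, so $C_m(\phi\restriction_W,L_W)=L_W$ for all $m$ and $h_{top}(\phi\restriction_W)=0$ by Corollary~\ref{invbase}. On $W_+$ the map $\phi_+:=\phi\restriction_{W_+}$ is an automorphism whose inverse is topologically contracting, so, by the same reasoning, $\phi_+^{-1}$ stabilizes a lattice $L_+$, i.e. $\phi_+^{-1}(L_+)\subseteq L_+$. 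The sets $\phi_+^{-j}(L_+)$ then decrease, giving $C_m(\phi_+,L_+)=\phi_+^{-(m-1)}(L_+)$; since $\phi_+$ scales Haar measure by $\mathrm{mod}(\phi_+)=|\det\phi_+|_p=\prod_{|\lambda_i|_p>1}|\lambda_i|_p=:\Lambda$, we obtain $[L_+:C_m(\phi_+,L_+)]=\Lambda^{\,m-1}$ and hence $H_{top}(\phi_+,L_+)=\log\Lambda$.

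Finally I assemble the computation on the split lattice $L=L_W\oplus L_+$. Because $W$ and $W_+$ are $\phi$-invariant, the cotrajectory splits as $C_m(\phi,L)=C_m(\phi\restriction_W,L_W)\oplus C_m(\phi_+,L_+)$, so $[L:C_m(\phi,L)]=\Lambda^{\,m-1}$ and $H_{top}(\phi,L)=\log\Lambda$; the identical computation applies to each $p^kL$. As $\{p^kL:k\in\N\}$ is a local base of $\Q_p^n$, Lemma~\ref{monotonia} gives $h_{top}(\phi)=\log\Lambda=\sum_{|\lambda_i|_p>1}\log|\lambda_i|_p$. I expect the main obstacle to be precisely the structural step together with the insight that it makes everything else routine: the naive route would write $h_{top}(\phi)=h_{top}(\phi\restriction_W)+h_{top}(\phi_+)$, which is exactly the (here unavailable) Addition Theorem, whereas choosing a lattice adapted to the invariant decomposition forces the cotrajectory itself to split, so the delicate upper bound never has to be proved. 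Alternatively, as noted before the statement, one may invoke the automorphism case of \cite{LW} and reach general endomorphisms through the Bridge Theorem \cite{DG-BT} and the algebraic Yuzvinski formula \cite{GBV0}.
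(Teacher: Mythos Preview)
Your argument is correct. The key steps---the rational splitting $\Q_p^n=W_+\oplus W$ coming from the fact that all roots of a $\Q_p$-irreducible polynomial share the same $p$-adic absolute value, the construction of a $\phi$-stable lattice on $W$ via Cayley--Hamilton (since $\chi_{\le}\in\Z_p[t]$ is monic), the construction of a $\phi_+^{-1}$-stable lattice on $W_+$, and the index computation via the module $|\det\phi_+|_p=\prod_{|\lambda_i|_p>1}|\lambda_i|_p$---all go through. The crucial observation, which you identify explicitly, is that choosing the split lattice $L=L_W\oplus L_+$ forces the cotrajectories themselves to split, so that $H_{top}(\phi,p^kL)=\log\Lambda$ for every $k$ and Lemma~\ref{monotonia} finishes the job without any appeal to an Addition Theorem.

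This is genuinely more than the paper does: the paper does not prove Theorem~\ref{yuzvinskip} at all, but quotes it from \cite{LW} (for automorphisms) and remarks that the endomorphism case follows from the algebraic Yuzvinski formula \cite{GBV0} via the Bridge Theorem \cite{DG-BT}. Your route is direct and self-contained, using only elementary $p$-adic linear algebra and the index formula \eqref{htoptdlc}; it bypasses both the algebraic-entropy machinery and Pontryagin duality. What the cited route buys is generality (the Bridge Theorem transfers many results at once), whereas your lattice argument is tailored to $\Q_p^n$ but makes the mechanism transparent: the entropy is produced entirely by the expanding block, and the adapted local base witnesses this exactly. You already note this alternative at the end of your write-up, which is appropriate.
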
  

In particular, the above highly non-trivial theorem implies that $\Q_p^n\in\mathfrak E_{<\infty}$.

\begin{example}\label{Qpp}
Let $n\in\N_+$ and consider the multiplication by $1/p$, that is, $\phi:\Q_p^n\to \Q_p^n$, $x\mapsto\frac{1}{p}x$. 
A straightforward computation (or an easy application of Theorem~\ref{yuzvinskip}) gives $h_{top}(\phi)=n\log p>0$. 
Taking into account the above observation as well, one has
$$\Q_p^n \in\mathfrak E_{<\infty}\setminus \mathfrak E_0.$$
\end{example}

\begin{remark}\label{Rrem}
An explicit formula, similar to the $p$-adic Yuzvinski's formula, for the computation of the topological entropy of continuous endomorphisms of $\R^n$, for $n\in\N$, is known from \cite{B}. It implies that, for $n\in\N_+$, 
\begin{equation}\label{Rn}
\R^n\in\mathfrak E_{<\infty}\setminus\mathfrak E_0.
\end{equation}
\end{remark}

\section{Addition Theorem}\label{ATsec}

\subsection{Basic facts}

We start recalling the following weak version of the Addition Theorem.

\begin{lemma}[See \cite{AKM,DG-islam,GBV,V}]\label{wAT} 
Let $G_1$, $G_2$ be locally compact groups that are either compact or totally disconnected, or isomorphic to $\R^n$ for some $n\in\N$, and $\phi_1\in\End(G_1)$, $\phi_2\in\End(G_2)$. Consider $G_1\times G_2$ with the product topology and $\phi_1\times\phi_2\in\End(G_1\times G_2)$. Then $h_{top}(\phi_1\times\phi_2)=h_{top}(\phi_1)+h_{top}(\phi_2)$.
\end{lemma}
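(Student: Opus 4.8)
The plan is to establish the product formula $h_{top}(\phi_1\times\phi_2)=h_{top}(\phi_1)+h_{top}(\phi_2)$ by a direct comparison of cotrajectories, handling the three possible ``shapes'' of each factor uniformly once we reduce to a convenient choice of compact neighborhoods of the identity. First I would recall that by Lemma~\ref{monotonia} it suffices to compute each topological entropy along a cofinal family of compact neighborhoods of $1$, and that a local base for the product $G_1\times G_2$ is given by sets of the form $U_1\times U_2$ with $U_i$ in a local base $\mathcal B_i\subseteq\mathcal C(G_i)$. The crucial algebraic observation is that cotrajectories split across products: for $\phi=\phi_1\times\phi_2$ and $U=U_1\times U_2$ one has
\begin{equation*}
C_n(\phi,U)=C_n(\phi_1,U_1)\times C_n(\phi_2,U_2),
\end{equation*}
since $\phi^{-k}(U_1\times U_2)=\phi_1^{-k}(U_1)\times\phi_2^{-k}(U_2)$ and intersections respect the product decomposition. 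Choosing a Haar measure on $G_1\times G_2$ to be the product of Haar measures on the factors, this gives $\mu(C_n(\phi,U))=\mu_1(C_n(\phi_1,U_1))\cdot\mu_2(C_n(\phi_2,U_2))$, hence
\begin{equation*}
\frac{-\log\mu(C_n(\phi,U))}{n}=\frac{-\log\mu_1(C_n(\phi_1,U_1))}{n}+\frac{-\log\mu_2(C_n(\phi_2,U_2))}{n}.
\end{equation*}

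Next I would pass to the limsup. The subadditivity of $\limsup$ immediately yields $H_{top}(\phi,U_1\times U_2)\leq H_{top}(\phi_1,U_1)+H_{top}(\phi_2,U_2)$, and taking suprema over $U_1$ and $U_2$ gives the inequality $h_{top}(\phi_1\times\phi_2)\leq h_{top}(\phi_1)+h_{top}(\phi_2)$; this direction holds with no restriction on the groups. For the reverse inequality one needs the two limsups to actually be limits (or at least to be ``achieved together''), so that the limsup of the sum equals the sum of the limsups. This is exactly where the hypothesis on $G_1,G_2$ enters: when each $G_i$ is compact, totally disconnected, or $\R^{n_i}$, the sequence $\tfrac{-\log\mu_i(C_n(\phi_i,U_i))}{n}$ converges (for the totally disconnected case this is the content of the limit formula \eqref{htoptdlc}; for the compact case it is the classical Adler--Konheim--McAndrew argument via submultiplicativity of covering numbers / indices; for $\R^n$ it follows from the Bowen-type explicit computation recalled in Remark~\ref{Rrem}). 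With genuine limits on a suitable cofinal family of $U_i$'s, $\lim_n (a_n+b_n)=\lim_n a_n+\lim_n b_n$, and taking suprema over $U_1,U_2$ along these cofinal families recovers $h_{top}(\phi_i)$ on each side; combined with monotonicity of $H_{top}$ in $U$ from Lemma~\ref{monotonia}, this gives $h_{top}(\phi_1\times\phi_2)\geq H_{top}(\phi_1,U_1)+H_{top}(\phi_2,U_2)$ for all $U_1,U_2$, and hence the matching lower bound after taking suprema.

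There is one genuine subtlety to address in the mixed cases (for instance $G_1$ compact or totally disconnected and $G_2\cong\R^{n}$): one must make sure the chosen cofinal families of compact neighborhoods in the two factors are compatible, i.e. that products of members of these families are still cofinal in $\mathcal C(G_1\times G_2)$. This is clear since any compact neighborhood of $(1,1)$ contains a box $U_1\times U_2$, and boxes of the prescribed special forms are cofinal in each factor. The main obstacle I expect is not any single case but rather writing the convergence-of-limsup step cleanly in a way that covers all three factor types at once: the cleanest route is to cite, for each admissible type, the already-known fact that $H_{top}(\phi_i,U_i)$ is a genuine limit along the relevant local base (compact: \cite{AKM}; totally disconnected: \eqref{htoptdlc} and \cite{DG-islam}; Euclidean: \cite{B}), and then the elementary fact that $\limsup(a_n+b_n)=\lim a_n+\lim b_n$ when $\lim a_n$ exists does the rest. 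Thus the proof is essentially: (i) split cotrajectories across the product; (ii) split the Haar measure; (iii) use subadditivity of $\limsup$ for ``$\leq$''; (iv) invoke the case-by-case convergence to upgrade to ``$\geq$''; (v) take suprema over the two local bases.
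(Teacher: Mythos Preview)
The paper does not give a proof of this lemma at all: it is stated with the citation ``See \cite{AKM,DG-islam,GBV,V}'' and no argument is supplied. So there is no ``paper's own proof'' to compare against; your task was essentially to reconstruct the argument found in those references, and your sketch does this correctly. The splitting of cotrajectories $C_n(\phi_1\times\phi_2,U_1\times U_2)=C_n(\phi_1,U_1)\times C_n(\phi_2,U_2)$ together with multiplicativity of the product Haar measure is exactly the core observation, and your handling of the $\limsup$ issue (subadditivity for $\leq$, genuine convergence in each admissible case for $\geq$) is the standard route.

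One small caution on the compact case: you invoke ``submultiplicativity of covering numbers / indices'' to get convergence, but the paper's working definition of $H_{top}$ is in terms of Haar measure, not covers. To make that step rigorous you would either appeal to the equivalence of Hood's uniform-space definition with the Adler--Konheim--McAndrew cover definition on compact spaces (mentioned in the paper's introduction), or argue directly that $n\mapsto -\log\mu(C_n(\phi,U))$ is subadditive. The latter is not automatic for arbitrary measurable sets, so the cleanest fix is the former: pass to the cover formulation, where subadditivity and hence existence of the limit are classical, and then transfer back. This is presumably why the paper simply cites \cite{AKM,DG-islam,GBV,V} rather than writing it out.
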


We now introduce other three levels of Addition Theorem. 

\begin{definition}\label{atdefinition}
Let $G$ be a  topological group.
\begin{enumerate}[(a)]
\item If $H$ is a fully invariant closed subgroup of $G$, $$AT(G,H)$$ means that $AT(G,\phi,H)$ holds for every $\phi\in\End(G)$.
\item If $\phi\in\End(G)$ and $H$ is a $\phi$-invariant closed normal subgroup of $G$, $$AT_0(G,\phi,H)\ (\text{respectively,}\ AT^0(G,\phi,H))$$ means that
$h_{top}(\phi)=h_{top}(\phi\restriction_H)$ and $h_{top}(\bar \phi_{G/H})=0$ (respectively, $h_{top}(\phi)=h_{top}(\bar \phi_{G/H})$ and $h_{top}(\phi\restriction_H)=0$).
\item  If $H$ is a fully invariant closed subgroup of $G$, $$AT_0(G,H)\ (\text{respectively,}\ AT^0(G,H))$$ means that 
$AT_0(G,\phi,H)$ (respectively, $AT^0(G,\phi,H)$) holds for every $\phi\in\End(G)$.
\end{enumerate}
\end{definition}

The following basic case of Addition Theorem was already proved in \cite[Corollary 4.17]{DSV} in the general case of all topological groups, here we give a short proof for the case of locally compact groups.

\begin{lemma}\label{DS4.17}
Let $G$ be a locally compact group, $\phi\in\End(G)$ and $H$ a $\phi$-invariant open normal subgroup of $G$. Then $AT_0(G,\phi,H)$ holds.
\end{lemma}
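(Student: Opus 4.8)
\textbf{Proof proposal for Lemma~\ref{DS4.17}.}

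The plan is to exploit the fact that when $H$ is open in $G$, a compact neighborhood $U$ of $1$ can be chosen inside $H$, and for such $U$ the cotrajectories $C_n(\phi,U)$ coincide whether computed in $G$ or in $H$. This immediately gives the first half, $h_{top}(\phi)=h_{top}(\phi\restriction_H)$: by Lemma~\ref{monotonia} the topological entropy is a supremum over any local base, and since $H$ is open, $\mathcal C(H)\subseteq \mathcal C(G)$ and the family of $U\in\mathcal C(G)$ contained in $H$ is cofinal in $\mathcal C(G)$; for each such $U$, $\phi^{-i}(U)\cap H=(\phi\restriction_H)^{-i}(U)$, so $C_n(\phi,U)=C_n(\phi\restriction_H,U)$, hence $H_{top}(\phi,U)=H_{top}(\phi\restriction_H,U)$, and taking suprema yields the equality. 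Combined with the trivial inequality $h_{top}(\phi\restriction_H)\le h_{top}(\phi)$ from Lemma~\ref{conju}(a), this settles $h_{top}(\phi)=h_{top}(\phi\restriction_H)$.

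For the second half, $h_{top}(\bar\phi_{G/H})=0$: since $H$ is open, the quotient group $G/H$ is discrete, and $\bar\phi_{G/H}$ is a continuous endomorphism of a discrete group. By Remark~\ref{discrete=0}, every continuous endomorphism of a discrete group has vanishing topological entropy, so $h_{top}(\bar\phi_{G/H})=0$. Putting the two halves together gives exactly the conclusion $AT_0(G,\phi,H)$ as in Definition~\ref{atdefinition}(b).

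I do not expect any serious obstacle here; the only point requiring a little care is the cofinality claim, namely that $\{U\in\mathcal C(G)\mid U\subseteq H\}$ is a local base at $1$ in $G$. This holds because $H$, being open, is itself a locally compact group whose identity neighborhoods (which may be taken compact) are also identity neighborhoods of $G$, and any $U\in\mathcal C(G)$ can be shrunk to $U\cap H'$ for a suitable compact neighborhood $H'$ of $1$ inside $H$. Once this is in place, the identity $\phi^{-i}(U)\cap H = (\phi\restriction_H)^{-i}(U)$ for $U\subseteq H$ is immediate from $\phi(H)\subseteq H$, and the rest is bookkeeping with Lemma~\ref{monotonia} and Remark~\ref{discrete=0}.
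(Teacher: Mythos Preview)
Your proof is correct and follows essentially the same approach as the paper's own proof: both use that $\mathcal C(H)$ is a local base of $G$ (since $H$ is open) together with Lemma~\ref{monotonia} to get $h_{top}(\phi)=h_{top}(\phi\restriction_H)$, and both invoke Remark~\ref{discrete=0} for the discrete quotient $G/H$ to conclude $h_{top}(\bar\phi_{G/H})=0$. Your version is simply more explicit in verifying $C_n(\phi,U)=C_n(\phi\restriction_H,U)$ for $U\in\mathcal C(H)$, a step the paper leaves implicit.
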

\begin{proof} 
By the hypotheses $\mathcal C(H)\subseteq \mathcal C(G)$ and $\mathcal C(H)$ is a local base of $G$, so $h_{top}(\phi)=h_{top}(\phi\restriction_H)$ by Lemma~\ref{monotonia}. Since $G/H$ is discrete, $h_{top}(\bar\phi_{G/H})=0$ by Remark~\ref{discrete=0}.
\end{proof}

\begin{corollary}\label{DS4.170} 
Let $G$ be a locally compact group and $H$ a fully invariant open subgroup of $G$. Then $AT_0(G,H)$ holds.
\end{corollary}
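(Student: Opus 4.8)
The plan is to derive Corollary~\ref{DS4.170} immediately from Lemma~\ref{DS4.17}, simply by quantifying over all continuous endomorphisms. Since $H$ is a \emph{fully invariant} closed subgroup, it is in particular $\phi$-invariant for every $\phi\in\End(G)$; and since $H$ is open, Lemma~\ref{DS4.17} applies to the triple $G,\phi,H$ for each such $\phi$, yielding that $AT_0(G,\phi,H)$ holds. By Definition~\ref{atdefinition}(c), the assertion that $AT_0(G,\phi,H)$ holds for every $\phi\in\End(G)$ is exactly the statement $AT_0(G,H)$.

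Concretely, I would write: Fix $\phi\in\End(G)$. Because $H$ is fully invariant, $\phi(H)\subseteq H$, so $H$ is $\phi$-invariant; it is also open and normal by hypothesis (open subgroups of a topological group are automatically... well, normality is part of the hypothesis here, or one notes that a fully invariant subgroup is normal since conjugations are automorphisms). Hence Lemma~\ref{DS4.17} gives $h_{top}(\phi)=h_{top}(\phi\restriction_H)$ and $h_{top}(\bar\phi_{G/H})=0$, i.e.\ $AT_0(G,\phi,H)$ holds. As $\phi$ was arbitrary, $AT_0(G,H)$ holds by Definition~\ref{atdefinition}(c).

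There is essentially no obstacle here: this is a one-line bookkeeping corollary, and the only thing worth a moment's care is confirming that ``fully invariant'' delivers $\phi$-invariance for \emph{every} endomorphism (not merely automorphisms), which is exactly the definition of fully invariant, so that Lemma~\ref{DS4.17}'s hypotheses are met uniformly in $\phi$. If the paper's convention were that fully invariant subgroups need not be normal, one would additionally invoke that open subgroups are closed and that a fully invariant subgroup is stable under inner automorphisms, hence normal — but since Lemma~\ref{DS4.17} already assumes normality and the corollary's hypothesis says ``fully invariant open subgroup,'' I would just note normality follows from full invariance (inner automorphisms are endomorphisms) and proceed. The proof is therefore a single short paragraph.

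\begin{proof}
Let $\phi\in\End(G)$. Since $H$ is fully invariant, $H$ is $\phi$-invariant; moreover $H$ is open, and it is normal because inner automorphisms of $G$ are endomorphisms. Hence Lemma~\ref{DS4.17} applies and gives that $AT_0(G,\phi,H)$ holds. As $\phi\in\End(G)$ was arbitrary, $AT_0(G,H)$ holds by Definition~\ref{atdefinition}(c).
\end{proof}
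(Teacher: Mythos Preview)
Your proof is correct and is exactly the intended argument: the paper states this corollary without proof precisely because it follows at once from Lemma~\ref{DS4.17} by letting $\phi$ range over all of $\End(G)$ and invoking Definition~\ref{atdefinition}(c). Your remark that full invariance yields normality (via inner automorphisms) is the only point worth making explicit, and you handle it correctly.
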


Next we give a useful application of Lemma~\ref{DS4.17}.

\begin{corollary}\label{new:lemma1}  
Let $G$ be a locally compact abelian group and endow $D(G)$ with the unique group topology that makes $G$ an open topological subgroup of $D(G)$. Then every $\phi\in\End(G)$ admits a continuous extension $\tilde \phi\in\End(D(G))$, and $AT_0(D(G),\tilde\phi,G)$ holds.
\end{corollary}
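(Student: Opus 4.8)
The plan is to make $D(G)$ into a locally compact abelian group in which $G$ is an open subgroup, to extend $\phi$ to a continuous endomorphism $\tilde\phi$ of $D(G)$, and then to read the statement off Lemma~\ref{DS4.17} applied to the triple $D(G),\tilde\phi,G$. For the first step, declaring $G$ open and equipped with its own topology determines a unique group topology on $D(G)$ (a base is given by the translates of the open subsets of $G$); this topology is Hausdorff, because $G$ being an open subgroup is also closed and is $T_2$ in its own right, and it is locally compact, because a compact neighbourhood of $0$ in $G$ stays a compact neighbourhood of $0$ in $D(G)$. Thus $D(G)$ is a locally compact abelian group, $G$ is an open (hence closed) subgroup, and $D(G)/G$ is discrete. (Incidentally $D(G)/G$ is torsion, as $G$ is essential in its divisible hull, though this will not be needed.)

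For the second step, the abstract group $D(G)$ is divisible, hence injective as a $\Z$-module, so applying injectivity to the inclusion $G\hookrightarrow D(G)$ and to the homomorphism $G\xrightarrow{\phi}G\hookrightarrow D(G)$ produces a homomorphism $\tilde\phi\colon D(G)\to D(G)$ with $\tilde\phi\restriction_G=\phi$. Continuity of $\tilde\phi$ is then automatic: since $G$ is open, the neighbourhoods of $0$ contained in $G$ form a local base of $D(G)$ at $0$, so continuity of $\tilde\phi$ at $0$ is literally continuity of $\phi$ at $0\in G$, and a homomorphism of topological groups continuous at $0$ is continuous everywhere. Hence $\tilde\phi\in\End(D(G))$ extends $\phi$.

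For the last step, $\tilde\phi(G)=\phi(G)\subseteq G$, so $G$ is a $\tilde\phi$-invariant open normal (normality being automatic as $D(G)$ is abelian) subgroup of the locally compact group $D(G)$; Lemma~\ref{DS4.17} then gives $AT_0(D(G),\tilde\phi,G)$, that is, $h_{top}(\tilde\phi)=h_{top}(\tilde\phi\restriction_G)=h_{top}(\phi)$ and $h_{top}(\overline{\tilde\phi}_{D(G)/G})=0$ (the vanishing also being clear from Remark~\ref{discrete=0}, since $D(G)/G$ is discrete). The proof is genuinely this short; the only point worth flagging — scarcely an obstacle — is that the extension $\tilde\phi$ provided by injectivity of $D(G)$ need not be canonical, so the statement must be understood as the existence of \emph{some} continuous extension for which $AT_0$ holds. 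This is harmless: $G$ is $\tilde\phi$-invariant for every such extension, so any choice of $\tilde\phi$ works.
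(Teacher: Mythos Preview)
Your proof is correct and follows exactly the paper's approach: extend $\phi$ using divisibility of $D(G)$, observe continuity is automatic since $G$ is open, and apply Lemma~\ref{DS4.17}. You have simply supplied more detail (the construction and local compactness of the topology on $D(G)$, the injectivity argument) where the paper's three-sentence proof leaves these as understood.
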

\begin{proof} 
The existence of such extension $\tilde\phi$ of $\phi\in\End(G)$ to $D(G)$ follows from the fact that $D(G)$ is divisible. The continuity of $\tilde \phi$ follows from that of $\phi$ since $G$ is open in $D(G)$. By Lemma~\ref{DS4.17}, $AT_0(D(G),\tilde\phi,G)$ holds.
\end{proof}    

The next result shows that in order to verify $AT(G)$ it suffices to verify $AT(N)$ for a fully invariant open subgroup of $G$.

\begin{proposition}\label{AT:wrt:open}
Let $G$ be a locally compact group and $N$ a fully invariant open subgroup of $G$.
If $AT(N)$ holds, then also $AT(G)$ holds.
\end{proposition}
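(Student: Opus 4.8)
The plan is to verify $AT(G,\phi,H)$ for an arbitrary $\phi\in\End(G)$ and an arbitrary $\phi$-invariant closed normal subgroup $H$ of $G$, transporting the hypothesis $AT(N)$ across the open subgroup $N$ and absorbing the resulting ``boundary'' terms by three applications of Lemma~\ref{DS4.17}. First I would fix notation and collect the structural facts. Since $N$ is fully invariant it is normal in $G$ (being invariant under all inner automorphisms) and $\phi$-invariant, so $\psi:=\phi\restriction_N\in\End(N)$. Put $K:=H\cap N$. Then $K$ is a closed normal subgroup of $G$, hence of $N$ and of $H$; it is $\psi$-invariant, because $\phi(K)\subseteq\phi(H)\cap\phi(N)\subseteq K$; and, crucially, $K$ is open in $H$ because $N$ is open in $G$. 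Symmetrically, $NH$ is an open (hence closed) normal subgroup of $G$, so $NH/H$ is an open normal subgroup of $G/H$, invariant under $\bar\phi_{G/H}$ since $\phi(NH)=\phi(N)\phi(H)\subseteq NH$.

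Then I would run the computation. Applying $AT(N)$ to the pair $\psi$, $K$ gives
$$h_{top}(\psi)=h_{top}(\psi\restriction_K)+h_{top}(\bar\psi_{N/K}),$$
and I would rewrite each of the three entropies occurring here in terms of $G$. The left-hand side equals $h_{top}(\phi)$ by Lemma~\ref{DS4.17} applied to the $\phi$-invariant open normal subgroup $N$ of $G$. The middle term $h_{top}(\psi\restriction_K)=h_{top}(\phi\restriction_K)$ equals $h_{top}(\phi\restriction_H)$ by Lemma~\ref{DS4.17} applied to the $(\phi\restriction_H)$-invariant open normal subgroup $K$ of $H$. For the last term, the map $N\to NH/H$, $n\mapsto nH$, is a continuous surjective open homomorphism with kernel $K$ (openness uses that $N$ is open in $G$), so it induces a topological isomorphism $N/K\to NH/H$, and a direct check shows it conjugates $\bar\psi_{N/K}$ with $\bar\phi_{G/H}\restriction_{NH/H}$; hence $h_{top}(\bar\psi_{N/K})=h_{top}(\bar\phi_{G/H}\restriction_{NH/H})$ by invariance of the topological entropy under conjugation by a topological isomorphism (cf.\ Lemma~\ref{conju}(b), whose proof makes no use of commutativity), and the latter equals $h_{top}(\bar\phi_{G/H})$ by Lemma~\ref{DS4.17} applied to the $\bar\phi_{G/H}$-invariant open normal subgroup $NH/H$ of $G/H$. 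Substituting, $h_{top}(\phi)=h_{top}(\phi\restriction_H)+h_{top}(\bar\phi_{G/H})$, i.e.\ $AT(G,\phi,H)$ holds; as $\phi$ and $H$ were arbitrary, $AT(G)$ holds.

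I expect this to be essentially a routine diagram chase once the topological bookkeeping is settled. The only point that deserves genuine care is the topological second isomorphism theorem, i.e.\ that $n\mapsto nH$ is really an \emph{open} map $N\to NH/H$, together with the two companion openness facts ($K$ open in $H$ and $NH/H$ open in $G/H$) that make the three instances of Lemma~\ref{DS4.17} applicable. These are precisely the places where the openness of $N$ in $G$ enters, so once they are in place no further obstacle remains.
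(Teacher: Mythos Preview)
Your proof is correct and follows essentially the same approach as the paper's: apply $AT(N,\psi,H\cap N)$, then identify the three resulting entropies with $h_{top}(\phi)$, $h_{top}(\phi\restriction_H)$, and $h_{top}(\bar\phi_{G/H})$ via three applications of Lemma~\ref{DS4.17} together with the topological second isomorphism $N/(H\cap N)\cong NH/H$. Your write-up is in fact slightly more careful than the paper's in making the normality and openness verifications explicit.
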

\begin{proof} 
Let $\phi \in \End(G)$ and let $H$ be a $\phi$-invariant closed normal subgroup of $G$. 
Since $N$ is fully invariant, $\psi= \phi\restriction_N\in \End(N)$ is well defined. As $AT(N, \psi,H\cap N)$ holds by hypothesis, we have that
\begin{equation}\label{NewProp}
h_{top}(\psi) = h_{top}(\psi\restriction_{H\cap N}) + h_{top}(\bar \psi_{N/H\cap N}).
\end{equation}
By Lemma~\ref{DS4.17}, 
\begin{equation}\label{NewProp0}
h_{top}(\psi) = h_{top}(\phi)\ \mbox{ and } \ h_{top}(\psi\restriction_{H\cap N})= h_{top}(\phi\restriction_{H}).
\end{equation}
 It remains to prove that
\begin{equation}\label{NewProp1} 
h_{top}(\bar \psi_{N/H\cap N}) = h_{top}(\bar \phi_{G/H}).
\end{equation}
Indeed, \eqref{NewProp}, \eqref{NewProp0} and \eqref{NewProp1} give that $AT(G,\phi,H)$ holds.

The open subgroup $N+H/H$ of $G/H$ is $\bar \phi_{G/H}$-invariant,  so Lemma~\ref{DS4.17} gives that, letting $\xi$ be the restriction of $\bar \phi_{G/H}$ to $N+H/H$, $$h_{top}(\xi) = h_{top}(\bar \phi_{G/H}).$$
The quotient map $q:G\to G/N$ is open, so is its restriction $q\restriction_N:N \to q(N) = N+H/H$ onto the open subgroup $N+H/H$ of $G/H$.
Hence, the canonical continuous isomorphism $t: N/H\cap N \to N+H/H$ is a topological isomorphism, and $\xi$ is conjugated to $\bar \psi_{N/H\cap N}$ via $t$. Therefore, $$h_{top}(\bar \psi_{N/H\cap N}) = h_{top}(\xi).$$
This equality, along with the previous one, yields \eqref{NewProp1}.
\end{proof}

\subsection{Totally disconnected locally compact groups}

The following proposition plays a prominent role in the proof of our main results.
 
\begin{proposition}\label{fundamentalstep}
 Let $G_1$ be a totally disconnected locally compact abelian group, $G_2$ a discrete abelian group, and let $G = G_1\times G_2$ equipped with the product topology. If $\phi\in\End(G)$ and $G_2$ is $\phi$-invariant, then  $AT^0(G,\phi,G_2)$ holds.
\end{proposition}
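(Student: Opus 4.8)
The plan is to exploit the product structure $G = G_1 \times G_2$ together with the fact that $G_2$ is open in $G$ (it is the product of the discrete group $G_2$ with the open subgroup... wait, $G_2$ need not be open since $G_1$ need not be discrete). Let me reconsider: $G_2$ is a discrete direct factor, but $G_1$ is only totally disconnected locally compact, so $G_2$ is a closed subgroup but not open in general. The quotient $G/G_2 \cong G_1$. So the goal $AT^0(G,\phi,G_2)$ amounts to showing $h_{top}(\phi\restriction_{G_2}) = 0$ and $h_{top}(\phi) = h_{top}(\bar\phi_{G/G_2})$.

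The first equality is immediate: $G_2$ is discrete, so $h_{top}(\phi\restriction_{G_2}) = 0$ by Remark~\ref{discrete=0}. The whole content is the second equality, $h_{top}(\phi) = h_{top}(\bar\phi_{G_1})$ (identifying $G/G_2$ with $G_1$). The inequality $\geq$ is Lemma~\ref{mon}(a). For the reverse inequality $h_{top}(\phi) \leq h_{top}(\bar\phi_{G_1})$, the plan is to work with the local base of $G$ coming from $\B(G_1)$. Since $G_1$ is totally disconnected locally compact, by van Dantzig the family $\B(G_1)$ of compact open subgroups is a local base at $0$ in $G_1$; hence $\{U \times \{0\} \text{-ish}\}$... more precisely, since $G_2$ is discrete, $\{U \times \{0\} : U \in \B(G_1)\}$... but these are not neighborhoods of $0$ in $G$ unless we add a neighborhood basis of $0$ in $G_2$, which for discrete $G_2$ is just $\{0\}$. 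So in fact $\{U \times \{0\} : U \in \B(G_1)\}$ IS a local base of compact open subgroups of $G$! Then $h_{top}(\phi) = \sup_{U \in \B(G_1)} H_{top}(\phi, U\times\{0\})$ by Lemma~\ref{monotonia}.

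The main step is then to compare $H_{top}(\phi, U \times \{0\})$ with $H_{top}(\bar\phi_{G_1}, U)$. Write $\phi$ in matrix form relative to the decomposition: $\phi(x,y) = (\alpha(x) + \beta(y),\ \gamma(x) + \delta(y))$ for suitable continuous homomorphisms, where $\phi$-invariance of $G_2$ forces $\gamma = 0$ (so that $\phi(0,y) \in G_2$), hence $\phi(x,y) = (\alpha(x)+\beta(y),\ \delta(y))$ with $\bar\phi_{G_1} = \alpha$. The cotrajectory $C_n(\phi, U\times\{0\})$ consists of those $(x,0)$ with $\phi^k(x,0) \in U \times \{0\}$ for $0 \le k \le n-1$; since the second coordinate of $\phi(x,0)$ is $\delta(0) = 0$, iterating stays in $G_1 \times \{0\}$, and on that subgroup $\phi$ acts as $(x,0)\mapsto(\alpha(x),0)$. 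Therefore $C_n(\phi, U\times\{0\}) = C_n(\alpha, U) \times \{0\}$ exactly, so $[U\times\{0\} : C_n(\phi,U\times\{0\})] = [U : C_n(\alpha,U)]$, giving $H_{top}(\phi, U\times\{0\}) = H_{top}(\alpha, U)$ for every $U$. Taking suprema over $U \in \B(G_1)$ yields $h_{top}(\phi) = h_{top}(\alpha) = h_{top}(\bar\phi_{G/G_2})$, completing the proof.

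The only subtlety — and the place I expect to need care rather than genuine difficulty — is justifying that $\{U \times \{0\} : U \in \B(G_1)\}$ really is a local base of compact open subgroups of $G$ (using that $G_2$ is discrete, so $\{0\}$ is open in $G_2$ and $U \times \{0\}$ is open in $G_1 \times G_2$), and then invoking the totally disconnected formula \eqref{htoptdlc} legitimately: $G$ itself is totally disconnected locally compact (product of a totally disconnected locally compact group with a discrete one), so \eqref{htoptdlc} applies to $\phi \in \End(G)$. Everything else is a routine identification of cotrajectories.
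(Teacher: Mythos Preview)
There is a genuine gap: you have the matrix decomposition backwards. The condition ``$G_2$ is $\phi$-invariant'' means $\phi(\{0\}\times G_2)\subseteq\{0\}\times G_2$, i.e., $\phi(0,y)=(\beta(y),\delta(y))\in\{0\}\times G_2$, which forces $\beta=0$, not $\gamma=0$. So the correct form is
\[
\phi(x,y)=\bigl(\alpha(x),\,\gamma(x)+\delta(y)\bigr),
\]
exactly as the paper writes (with $\phi_1=\alpha$, $\phi_3=\gamma$, $\phi_2=\delta$). In particular $\phi(x,0)=(\alpha(x),\gamma(x))$, and the second coordinate is \emph{not} $0$ in general; your claim that ``iterating stays in $G_1\times\{0\}$'' and hence $C_n(\phi,U\times\{0\})=C_n(\alpha,U)\times\{0\}$ fails without further work.

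The missing step, which is the actual content of the paper's proof, is this: for any $K\in\B(G_1)$ the image $\gamma(K)$ is a compact subgroup of the discrete group $G_2$, hence finite, so $V:=K\cap\ker\gamma$ has finite index in $K$ and therefore $V\in\B(G_1)$. Restricting to compact open $U\subseteq V$, one has $\gamma(U)=0$; then for $(x,0)$ in the cotrajectory the first coordinate of each $\phi^k(x,0)$ lies in $U\subseteq\ker\gamma$, so inductively the second coordinate remains $0$ and $C_n(\phi,U\times\{0\})=C_n(\alpha,U)\times\{0\}$ as desired. Since the $U\times\{0\}$ with $U\in\B(G_1)$, $U\subseteq V$, still form a local base of $G$, your concluding sup argument then goes through. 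Once this is added, your approach coincides with the paper's.
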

\begin{proof} 
There exist $\phi_1\in\End(G_1)$, $\phi_2\in\End(G_2)$ and a continuous homomorphism $\phi_3: G_1 \to G_2$, such that for every $(x,y) \in G$ $$\phi(x,y) = (\phi_1(x), \phi_2(y) + \phi_3(x)).$$  
Let $K\in\B(G_1)$. Since $G_2$ is discrete, the compact subgroup $\phi_3(K)$ of $G_2$ must be finite.  
Therefore, $V= \ker\phi_3 \cap K$ has finite index in $K$, so $V\in \B(G_1)$, that is, $V \times \{0\}\in\B(G)$.

For every $U\in \B(G)$ contained in $V$,  and for every $n\in\N_+$,
\begin{align*}
C_{n}(\phi,U\times \{0\})&= (U\times\{0\})\cap \phi^{-1}(U\times\{0\}) \cap\ldots\cap \phi^{-n+1}(U\times\{0\})\\
&=\{(x,0)\in U\times\{0\}\mid \phi^k_1(x) \in U\ \forall k\in\{0,\ldots,n-1\}\}\\
&=\{(x,0)\in U\times\{0\}\mid x \in\phi^{-k}(U)\ \forall k\in\{0,\ldots,n-1\}\}\\
&=C_{n}(\phi_1,U)\times \{0\}.
\end{align*}
This yields 
$H_{top}(\phi,U\times\{0\}) = H_{top}(\phi_1,U)$.
Since $\mathcal B=\{U\times \{0\}\mid U\in \B(G_1),\ U\subseteq V\}\subseteq \B(G)$ is a local base of $G$, by Lemma~\ref{monotonia} we conclude that
$h_{top}(\phi)=h_{top}(\phi_1)$.

Let $\alpha:G/G_2\to G_1$, $(x,y)+G_2\mapsto x$, be the canonical isomorphism. Then  $\bar \phi: G/G_2 \to G/G_2$ is conjugated to $\phi_1$ by $\alpha$, and so we can conclude that $h_{top}(\phi) = h_{top}(\bar \phi)$ by Lemma~\ref{conju}(b).  On the other hand, $h_{top}(\phi\restriction_{G_2})=h_{top}(\phi_2) = 0$, as $G_2$ is discrete. Therefore, $AT^0(G,\phi,G_2)$ holds true. 
\end{proof} 

The following is a direct consequence of Proposition~\ref{fundamentalstep} when $G_2$ is fully invariant whenever identified with the subgroup $\{0\}\times G_2$ of $G$.

\begin{corollary}\label{fundamentalstepcor} 
Let $G_1$ be a totally disconnected locally compact abelian group, $G_2$ a discrete abelian group, and let $G = G_1\times G_2$ equipped with the product topology. If $G_2$ is fully invariant, then $AT^0(G,G_2)$ holds.
\end{corollary}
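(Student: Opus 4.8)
The plan is to deduce this directly from Proposition~\ref{fundamentalstep} by unwinding the two definitions involved. Recall that $AT^0(G,G_2)$ means, by Definition~\ref{atdefinition}(c), that $AT^0(G,\phi,G_2)$ holds for \emph{every} $\phi\in\End(G)$; and that $G_2$, identified with the subgroup $\{0\}\times G_2\leq G$, being fully invariant means precisely that $\phi(\{0\}\times G_2)\subseteq\{0\}\times G_2$ for every $\phi\in\End(G)$, i.e. that $\{0\}\times G_2$ is $\phi$-invariant for every such $\phi$. So the argument is simply: fix an arbitrary $\phi\in\End(G)$; by full invariance the subgroup $\{0\}\times G_2$ is $\phi$-invariant, hence Proposition~\ref{fundamentalstep} applies and gives $AT^0(G,\phi,G_2)$; since $\phi$ was arbitrary, $AT^0(G,G_2)$ holds.

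There is essentially no obstacle here --- the only point to be careful about is the harmless identification of $G_2$ with the subgroup $\{0\}\times G_2$ of $G$, which is exactly the identification under which Proposition~\ref{fundamentalstep} is phrased (its proof writes a generic $\phi\in\End(G)$ leaving $\{0\}\times G_2$ invariant as $\phi(x,y)=(\phi_1(x),\phi_2(y)+\phi_3(x))$). One may also note that the conclusion $h_{top}(\phi\restriction_{G_2})=0$ for all $\phi$ is automatic anyway, since $G_2$ is discrete (Remark~\ref{discrete=0}); the real content of the statement is the sharper equality $h_{top}(\phi)=h_{top}(\bar\phi_{G/G_2})$, namely that the discrete ``tail'' $G_2$ never contributes to the topological entropy, and that is precisely what Proposition~\ref{fundamentalstep} delivers.
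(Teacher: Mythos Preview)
Your proof is correct and is exactly the approach the paper takes: the corollary is stated as a direct consequence of Proposition~\ref{fundamentalstep}, and you have simply unwound the definitions (full invariance plus Definition~\ref{atdefinition}(c)) to make that immediate deduction explicit.
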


The above corollary applies directly in the following one.

\begin{corollary}\label{cor:fundamentalstep}
Let $G_1$ be a totally disconnected locally compact abelian group and let $G_2$ be a discrete abelian group.
Suppose that one of the following conditions is fulfilled: 
\begin{enumerate}[(a)]
\item $G_1$ is torsion-free and $G_2$ is torsion; 
\item $G_1$ is reduced and $G_2$ is divisible. 
\end{enumerate}
Then, for $G = G_1\times G_2$ equipped with the product topology, $AT^0(G,G_2)$ holds.
\end{corollary}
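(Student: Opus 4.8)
The plan is to reduce both cases of Corollary~\ref{cor:fundamentalstep} to Corollary~\ref{fundamentalstepcor}, whose hypothesis is that $G_2$ (identified with $\{0\}\times G_2$) is fully invariant in $G=G_1\times G_2$. So the only thing to check, in each of the two cases, is that every $\psi\in\End(G)$ carries $\{0\}\times G_2$ into itself. Writing $\psi$ in matrix form relative to the decomposition $G=G_1\times G_2$, the component that could violate invariance is a continuous homomorphism $\theta\colon G_2\to G_1$; I must show $\theta=0$ under hypothesis (a) or (b).

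First I would dispose of case (a). Here $G_2$ is torsion and $G_1$ is torsion-free. Any homomorphism from a torsion group to a torsion-free group is trivial (if $x\in G_2$ has $nx=0$ then $n\theta(x)=0$ in the torsion-free group $G_1$, forcing $\theta(x)=0$); continuity is not even needed. Hence $\theta=0$, so $\{0\}\times G_2$ is fully invariant, and Corollary~\ref{fundamentalstepcor} gives $AT^0(G,G_2)$. For case (b), $G_2$ is divisible and $G_1$ is reduced. A continuous homomorphism $\theta\colon G_2\to G_1$ has image a divisible subgroup of $G_1$; since $G_1$ is reduced its only divisible subgroup is $\{0\}$, so again $\theta=0$ and Corollary~\ref{fundamentalstepcor} applies. (One should note that "reduced" for the topological group $G_1$ is meant in the sense that $G_1$ has no nontrivial divisible subgroup — the topology plays no role in killing $\theta$ here, only the algebraic structure.)

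I would then just assemble: in both cases we have shown that every $\psi\in\End(G)$ restricts to an endomorphism of $\{0\}\times G_2$, i.e.\ $G_2$ is fully invariant in $G$; since $G_1$ is totally disconnected locally compact abelian and $G_2$ is discrete abelian, Corollary~\ref{fundamentalstepcor} yields that $AT^0(G,G_2)$ holds. This completes the proof.

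I do not expect a genuine obstacle here — the corollary is a bookkeeping consequence of the previous one, and the entire content is the elementary observation that the cross term $G_2\to G_1$ must vanish. The only point requiring a word of care is making sure the notion of "reduced" invoked in (b) is the purely algebraic one (no nontrivial divisible subgroup), so that the image-is-divisible argument closes; if instead one wanted to allow "reduced" to refer to some topological variant, one would need the extra remark that a continuous image of a divisible group is still algebraically divisible, which is immediate. Everything else is a direct citation of Corollary~\ref{fundamentalstepcor}.
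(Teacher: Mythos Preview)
Your proposal is correct and follows exactly the approach the paper intends: the paper simply states that Corollary~\ref{fundamentalstepcor} ``applies directly,'' and your argument spells out the only thing to check, namely that any homomorphism $G_2\to G_1$ vanishes under either hypothesis (a) or (b), so that $\{0\}\times G_2$ is fully invariant.
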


We see now that under some suitable conditions the validity of $AT(G)$ implies the validity of $AT(G_1)$ for a topological direct summand $G_1$ of $G$.

\begin{proposition}\label{neeew}
Let $G=G_1\times G_2$ be a locally compact abelian group, where $G_1$ and $G_2$ are either compact, or totally disconnected, or isomorphic to $\R^n$ for some $n\in\N$.
If $AT(G)$ holds then both $AT(G_1)$ and $AT(G_2)$ holds.
\end{proposition}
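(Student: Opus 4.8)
The plan is to reduce $AT(G_1)$ (and, by symmetry, $AT(G_2)$) to the assumed $AT(G)$ by lifting the data along the first projection. So fix an arbitrary $\phi_1\in\End(G_1)$ and an arbitrary $\phi_1$-invariant closed subgroup $H_1$ of $G_1$; we must show that $AT(G_1,\phi_1,H_1)$ holds. Put $\phi=\phi_1\times\mathrm{id}_{G_2}\in\End(G)$ and $H=H_1\times G_2$, which is a closed subgroup of $G$, normal since $G$ is abelian, and $\phi$-invariant because $\phi(H)=\phi_1(H_1)\times G_2\subseteq H$. Since $AT(G)$ holds, $AT(G,\phi,H)$ holds, i.e. $h_{top}(\phi)=h_{top}(\phi\restriction_H)+h_{top}(\bar\phi_{G/H})$; the whole point is then to rewrite each of the three terms in terms of $\phi_1$.

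The single ingredient needed for that is the following observation, valid for all locally compact abelian groups $A,B$ and all $\psi\in\End(A)$: $h_{top}(\psi\times\mathrm{id}_B)=h_{top}(\psi)$. To prove it, fix Haar measures $\mu_A$ on $A$ and $\mu_B$ on $B$, and take $\mu=\mu_A\times\mu_B$ as Haar measure on $A\times B$; the sets $U\times V$ with $U\in\mathcal C(A)$ and $V\in\mathcal C(B)$ form a local base of compact neighborhoods of $A\times B$, and for every $n\in\N_+$ one has $(\psi\times\mathrm{id}_B)^{-k}(U\times V)=\psi^{-k}(U)\times V$ for each $k$, hence $C_n(\psi\times\mathrm{id}_B,U\times V)=C_n(\psi,U)\times V$ and $\mu(C_n(\psi\times\mathrm{id}_B,U\times V))=\mu_A(C_n(\psi,U))\cdot\mu_B(V)$. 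The summand $\log\mu_B(V)$ is a constant, so it vanishes after dividing by $n$ in the $\limsup$, giving $H_{top}(\psi\times\mathrm{id}_B,U\times V)=H_{top}(\psi,U)$; taking the supremum over the local base and using Lemma~\ref{monotonia} yields the claim. (When $A$ and $B$ both belong to the class of Lemma~\ref{wAT}, this is also immediate from that lemma together with $h_{top}(\mathrm{id}_B)=0$.)

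Now the three terms are handled as follows. By the observation, $h_{top}(\phi)=h_{top}(\phi_1\times\mathrm{id}_{G_2})=h_{top}(\phi_1)$. Identifying $H$ with $H_1\times G_2$ we have $\phi\restriction_H=(\phi_1\restriction_{H_1})\times\mathrm{id}_{G_2}$, so the observation applied to $A=H_1$, $B=G_2$, $\psi=\phi_1\restriction_{H_1}$ gives $h_{top}(\phi\restriction_H)=h_{top}(\phi_1\restriction_{H_1})$. Finally, the composite $G=G_1\times G_2\to G_1\to G_1/H_1$ is a continuous open surjection with kernel $H$, hence it induces a topological isomorphism $\alpha\colon G/H\to G_1/H_1$; since $\phi=\phi_1\times\mathrm{id}_{G_2}$, this $\alpha$ conjugates $\bar\phi_{G/H}$ to the endomorphism $\bar\phi_1$ of $G_1/H_1$ induced by $\phi_1$, so $h_{top}(\bar\phi_{G/H})=h_{top}(\bar\phi_1)$ by Lemma~\ref{conju}(b). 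Substituting these three identities into $AT(G,\phi,H)$ gives $h_{top}(\phi_1)=h_{top}(\phi_1\restriction_{H_1})+h_{top}(\bar\phi_1)$, which is exactly $AT(G_1,\phi_1,H_1)$; as $\phi_1$ and $H_1$ were arbitrary, $AT(G_1)$ holds, and $AT(G_2)$ holds by the symmetric argument.

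I do not expect a genuine obstacle: the argument is essentially bookkeeping once the product observation is isolated. The one trap to avoid is deducing $h_{top}(\phi\restriction_H)=h_{top}(\phi_1\restriction_{H_1})$ directly from Lemma~\ref{wAT}: when $G_1\cong\R^n$, a closed subgroup $H_1$ may be isomorphic to $\R^k\times\Z^m$, and then $\phi_1\restriction_{H_1}$ need not split as a direct product of endomorphisms, so $H_1$ falls outside the scope of Lemma~\ref{wAT}. The measure-theoretic proof of the observation sidesteps this entirely; in fact that proof uses nothing about $G_1$ and $G_2$ being compact, totally disconnected, or $\R^n$, so the hypothesis of the proposition is not really needed.
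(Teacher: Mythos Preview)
Your proof is correct and follows the same overall plan as the paper --- lift $\phi_1$ and $H_1$ to $G$, apply $AT(G)$, and identify the three entropy terms --- but the implementation differs. The paper takes $\phi=\phi_1\times 0$ and $H=H_1\times\{0\}$, so that $G/H\cong G_1/H_1\times G_2$; it then invokes Lemma~\ref{wAT} twice (once on $G_1\times G_2$ and once on $G_1/H_1\times G_2$), which is where the hypothesis on $G_1$ and $G_2$ is used. You instead take $\phi=\phi_1\times\mathrm{id}_{G_2}$ and $H=H_1\times G_2$, so that $G/H\cong G_1/H_1$ directly, and you replace Lemma~\ref{wAT} by the elementary Haar-measure observation $h_{top}(\psi\times\mathrm{id}_B)=h_{top}(\psi)$. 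Your route is a bit more self-contained and, as you note, actually proves the proposition without the standing hypothesis that $G_1,G_2$ be compact, totally disconnected, or copies of $\R^n$; the paper's route, by contrast, stays entirely within results already quoted and avoids re-opening the Haar-measure definition.
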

\begin{proof}
Obviously, it is enough to check $AT(G_1)$. To this end fix $\phi_1\in \End(G_1)$ and a $\phi_1$-invariant closed subgroup $H$ of $G_1$. 

Consider $\phi = \phi_1 \times 0 \in \End(G)$, so that $\phi\restriction_{G_1} = \phi_1$ and $\phi\restriction_{G_2}= 0$. 
 This yields $h_{top}(\phi\restriction_{G_2})= 0$. Therefore, Lemma~\ref{wAT} gives 
\begin{equation}\label{loc:prod0}
h_{top}(\phi) = h_{top}(\phi\restriction_{G_1})= h_{top}(\phi_1). 
\end{equation} 
Note that  $H$ is also a $\phi$-invariant closed subgroup of $G$.  Since $AT(G,\phi,H)$ holds in view of our hypothesis $AT(G)$, we have that
\begin{equation}\label{loc:prod}
h_{top}(\phi) = h_{top}(\phi\restriction_H)+ h_{top}(\bar\phi_{G/H}) = h_{top}(\phi_1\restriction_H)+ h_{top}(\bar\phi_{G/H}).
\end{equation}
Since $G/H\cong G_1/H \times G_2$ and $\bar\phi_{G/H}$ is conjugated to $\psi= \overline{(\phi_1)}_{G_1/H}\times \phi\restriction_{G_2}$,  by Lemma~\ref{wAT} and Lemma~\ref{conju}(b),
$$h_{top}(\bar\phi_{G/H}) = h_{top}(\psi)=h_{top}(\overline{(\phi_1)}_{G_1/H}).$$
Along with \eqref{loc:prod0} and \eqref{loc:prod}, this implies $h_{top}(\phi_1)=  h_{top}(\phi_1\!\! \restriction_H)+ h_{top}(\overline{(\phi_1)}_{G_1/H})$, i.e., $AT(G_1,\phi_1,H_1)$ holds. 
\end{proof}

Next we prove a natural instance of the Addition Theorem with respect to the kernel of the endomorphism.

\begin{theorem}\label{LAAAAAst:thm}
Let $G$ be a totally disconnected locally compact group and $\phi\in\End(G)$.
If there exists a local base $\B\subseteq\B(G)$ of $G$ such that, for every $U\in\B$, $\phi^{-n}(U)$ is normal in $G$ for every $n\in\N_+$, then $AT^0(G,\phi,\ker\phi)$ holds.
\end{theorem}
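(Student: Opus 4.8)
The plan is to prove $AT^0(G,\phi,\ker\phi)$ by showing two things: first, that $h_{top}(\phi\restriction_{\ker\phi})=0$, which is immediate because $\phi$ restricted to its own kernel is the trivial endomorphism (so $C_n(\phi\restriction_{\ker\phi},U)=U$ for all $n$), and second — the substantial part — that $h_{top}(\phi)=h_{top}(\bar\phi_{G/\ker\phi})$. The inequality $h_{top}(\bar\phi_{G/\ker\phi})\leq h_{top}(\phi)$ is already available from Lemma~\ref{mon}(a), so the heart of the matter is the reverse inequality $h_{top}(\phi)\leq h_{top}(\bar\phi_{G/\ker\phi})$.

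\medskip

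For the reverse inequality, write $K=\ker\phi$, $\bar G=G/K$, $\bar\phi=\bar\phi_{G/K}$, and let $\pi:G\to\bar G$ be the quotient map. Fix $U\in\B$; by the hypothesis and van Dantzig together with \eqref{htoptdlc} it suffices to bound $H_{top}(\phi,U)$ by $h_{top}(\bar\phi)$. The key observation is that, since $\phi$ factors through $\pi$ (there is $\psi\colon\bar G\to G$ continuous with $\phi=\psi\circ\pi$, or more simply one works directly with preimages), one has for $n\geq 1$
\[
C_n(\phi,U)=U\cap\phi^{-1}(U)\cap\cdots\cap\phi^{-(n-1)}(U).
\]
The first factor $U$ is a compact open subgroup, and each subsequent factor $\phi^{-k}(U)$ for $k\geq 1$ is, by hypothesis, a \emph{normal} subgroup of $G$ containing $K$, so it equals $\pi^{-1}(\pi(\phi^{-k}(U)))=\pi^{-1}(\bar\phi^{-k}(\pi(U)))$ — here one uses $\phi^{-k}(U)=\pi^{-1}(\bar\phi^{-(k-1)}(\pi(U)))$, which follows from $\phi=\,$(lift)$\,\circ\,\pi$ and $K\subseteq\phi^{-k}(U)$. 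Consequently, setting $V=\pi(U)\in\B(\bar G)$,
\[
C_n(\phi,U)=U\cap\pi^{-1}\bigl(\bar\phi^{-1}(V)\cap\cdots\cap\bar\phi^{-(n-1)}(V)\bigr)=U\cap\pi^{-1}\bigl(C_{n}(\bar\phi,V)\bigr),
\]
(after a harmless reindexing), which gives an index estimate $[U:C_n(\phi,U)]\leq [U:U\cap\pi^{-1}(C_n(\bar\phi,V))]\leq [\pi(U):C_n(\bar\phi,V)]=[V:C_n(\bar\phi,V)]$, because $\pi\restriction_U$ surjects onto $V$ with $\pi(C_n(\phi,U))\subseteq C_n(\bar\phi,V)$. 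Dividing by $n$ and passing to the limit yields $H_{top}(\phi,U)\leq H_{top}(\bar\phi,V)\leq h_{top}(\bar\phi)$, and taking the supremum over $U\in\B$ gives $h_{top}(\phi)\leq h_{top}(\bar\phi)$ by Lemma~\ref{monotonia}.

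\medskip

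The main obstacle I anticipate is the bookkeeping around the identity $\phi^{-k}(U)=\pi^{-1}(\bar\phi^{-(k-1)}(\pi(U)))$ and making precise the reindexing so that the cotrajectory on $G$ genuinely matches a cotrajectory on $\bar G$. This requires (i) that $\phi$ indeed factors as $\bar\phi$ composed with $\pi$ after identifying $G/K$ appropriately — which is automatic since $K=\ker\phi$ — and (ii) the normality hypothesis, which is exactly what guarantees $\phi^{-k}(U)$ is $\pi$-saturated so that $\pi(\phi^{-k}(U))$ pulls back to itself; without normality one only has $\phi^{-k}(U)\supseteq K$ as sets and the saturation could fail. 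A secondary subtlety is the surjectivity of $\pi\restriction_U\colon U\to\pi(U)$ together with the inclusion $\pi(C_n(\phi,U))\subseteq C_n(\bar\phi,\pi(U))$, which is what legitimizes the index inequality $[U:C_n(\phi,U)]\leq[\pi(U):C_n(\bar\phi,\pi(U))]$; one should verify this by an elementary counting argument using that $\ker(\pi\restriction_U)=K\cap U\subseteq C_n(\phi,U)$. Once these points are nailed down, the rest is the routine passage to $\limsup$ and to suprema over the local base $\B$, exactly as in \eqref{htoptdlc} and Lemma~\ref{monotonia}.
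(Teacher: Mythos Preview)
Your approach follows the paper's closely, and both run into the same gap. The identity you need, $\pi(\phi^{-k}(U))=\bar\phi^{-k}(\pi(U))$ (equivalently $\phi^{-k}(U)=\pi^{-1}(\bar\phi^{-k}(\pi(U)))$), is false in general; only the inclusion $\pi(\phi^{-k}(U))\subseteq\bar\phi^{-k}(\pi(U))$ holds, because $\pi^{-1}(\bar\phi^{-k}(\pi(U)))=\phi^{-k}(UK)$, not $\phi^{-k}(U)$. Take $G=\Q_p^2$ (abelian, so the normality hypothesis is vacuously satisfied), $\phi(x,y)=(y/p,0)$ and $U=\Z_p^2$: then $K=\Q_p\times\{0\}$, $\bar G\cong\Q_p$, $\bar\phi=0$, $V=\pi(U)=\Z_p$, and $\phi^{-1}(U)=\Q_p\times p\Z_p$, so $\pi(\phi^{-1}(U))=p\Z_p\subsetneq\Q_p=\bar\phi^{-1}(V)$. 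Consequently $[U:C_2(\phi,U)]=p>1=[V:C_2(\bar\phi,V)]$. The containment you do have yields $[U:C_n(\phi,U)]\geq[V:C_n(\bar\phi,V)]$, the \emph{opposite} of the inequality you wrote and of what is needed for $h_{top}(\phi)\leq h_{top}(\bar\phi)$. (The paper's isomorphism chain has the same flaw at the step ``$\pi(U)\cap\pi(N)=C_n(\bar\phi,\pi(U))$'', which the same example refutes.) Incidentally, your diagnosis of the role of normality is off: the saturation $\phi^{-k}(U)=\pi^{-1}(\pi(\phi^{-k}(U)))$ comes simply from $K\subseteq\phi^{-k}(U)$, and $\phi^{-k}(U)$ is always a subgroup regardless.

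A correct route is close to your alternative formula but with the right set in place of $\pi(U)$. Writing $\phi=\psi\circ\pi$ with $\psi\colon\bar G\to G$ the induced continuous injective homomorphism, one checks $\phi^k=\psi\circ\bar\phi^{\,k-1}\circ\pi$ for $k\geq1$, hence the \emph{exact} identity $\phi^{-k}(U)=\pi^{-1}\bigl(\bar\phi^{-(k-1)}(W)\bigr)$ with $W=\psi^{-1}(U)$, an open subgroup of $\bar G$. This gives $C_n(\phi,U)=U\cap\pi^{-1}(C_{n-1}(\bar\phi,W))$ and therefore $[U:C_n(\phi,U)]=[V:V\cap C_{n-1}(\bar\phi,W)]\leq[V:C_{n-1}(\bar\phi,V\cap W)]$; since $V\cap W\in\B(\bar G)$ and $[V:V\cap W]<\infty$ is independent of $n$, one obtains $H_{top}(\phi,U)\leq H_{top}(\bar\phi,V\cap W)\leq h_{top}(\bar\phi)$, and taking the supremum over $U\in\B$ finishes the proof.
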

\begin{proof}
Clearly, $h_{top}(\phi\restriction_{\ker\phi})=0$, since $\phi\restriction_{\ker\phi}=0$. 

It remains to verify that $h_{top}(\phi)=h_{top}(\bar\phi)$, where we denote $\bar\phi=\bar\phi_{G/\ker\phi}$. Since $h_{top}(\bar\phi)\leq h_{top}(\phi)$ by Lemma~\ref{mon}(a), we are left with the converse inequality.

Denote by $\pi:G\to G/\ker\phi$ the canonical projection. Moreover, recall that $\pi(\B(G))=\{\pi(U)\mid U\in\B(G)\}\subseteq \B(G/\ker\phi)$ is a local base of $G/\ker\phi$ (see \cite[Corollary 2.5]{GBV}).
Let $U\in\B$ and $n\in\N_+$. It follows from our hypothesis that $N=\phi^{-1}(U)\cap\ldots\cap\phi^{-n+1}(U)$ is a normal subgroup of $G$, so $\pi(N)$ is a normal subgroup 
of $G/\ker \phi$. Therefore,
$$\frac{U}{C_n(\phi,U)}\cong \frac{U N}{N}\cong \frac{\pi(UN)}{\pi(N)}=\frac{\pi(U)\pi(N)}{\pi(N)}\cong\frac{\pi(U)}{\pi(U)\cap\pi(N)}=\frac{\pi(U)}{C_n(\bar\phi,\pi(U))};$$
hence, $$[U:C_n(\phi,U)]=[\pi(U):C_n(\bar\phi,\pi(U))].$$
Therefore, $H_{top}(\phi,U)= H_{top}(\bar\phi,\pi(U))$ and finally $h_{top}(\phi)=h_{top}(\bar\phi)$ by Lemma~\ref{monotonia} and \eqref{htoptdlc}.
\end{proof}

\subsection{Reduction to locally compact abelian $p$-groups}

Following \cite{DPS}, for a topological abelian group $G$ we denote by $B(G)$ the set of all compact elements of $G$, that are the elements of $G$ contained in a compact subgroup of $G$. Hence (by definition), $B(G)$ is the largest compactly covered subgroup of $G$, therefore it is fully invariant $G$. Also the subgroup $$\varpi(G)=B(G) + c(G)$$ is fully invariant, and, if $G$ is locally compact, $\varpi(G)$ is also open in $G$ (see \cite[Proposition 3.3.6]{DPS}). This means that $B(G)$ itself is open in $G$ if and only if $G$ contains no copies of $\R$, i.e., $c(G)$ is compact.

\begin{proposition}\label{red2}
Let $G$ be a locally compact abelian group. Then $AT_0(G,\varpi(G))$ holds.
\end{proposition}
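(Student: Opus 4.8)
The plan is to show that $AT_0(G,\varpi(G))$ holds, i.e., for every $\phi\in\End(G)$ one has $h_{top}(\phi)=h_{top}(\phi\restriction_{\varpi(G)})$ and $h_{top}(\bar\phi_{G/\varpi(G)})=0$. The key observation is that $\varpi(G)$ is a \emph{fully invariant open} subgroup of $G$, so this is a direct instance of Corollary~\ref{DS4.170} (equivalently, of Lemma~\ref{DS4.17} applied to each $\phi$). Thus the entire content of the proposition is the assertion that $\varpi(G)$ is open and fully invariant.

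First I would recall, following the discussion immediately preceding the statement, that $B(G)$ is the largest compactly covered subgroup of $G$ and is fully invariant (a continuous endomorphism sends a compact subgroup into a compact subgroup, hence maps compact elements to compact elements), and that $c(G)$, the connected component of the identity, is also fully invariant (continuous endomorphisms preserve connectedness and fix $0$). Therefore their sum $\varpi(G)=B(G)+c(G)$ is fully invariant as well. Next I would invoke \cite[Proposition 3.3.6]{DPS} (already cited in the excerpt), which gives that $\varpi(G)$ is open in $G$ whenever $G$ is locally compact abelian.

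Having established that $\varpi(G)$ is a fully invariant open subgroup, the proof concludes in one line: by Corollary~\ref{DS4.170}, $AT_0(G,\varpi(G))$ holds. Concretely, for any $\phi\in\End(G)$, since $\varpi(G)$ is $\phi$-invariant and open, $\mathcal C(\varpi(G))\subseteq\mathcal C(G)$ is a local base of $G$, so $h_{top}(\phi)=h_{top}(\phi\restriction_{\varpi(G)})$ by Lemma~\ref{monotonia}; and $G/\varpi(G)$ is discrete, so $h_{top}(\bar\phi_{G/\varpi(G)})=0$ by Remark~\ref{discrete=0}.

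Honestly, there is no real obstacle here: the proposition is a packaging result, and the only ``work'' is citing that $\varpi(G)$ is open (external input from \cite{DPS}) and observing full invariance (immediate). The mild point worth being careful about is making explicit why $B(G)$ and $c(G)$ are fully invariant rather than merely characteristic, but for abelian $G$ these notions coincide anyway, so even this is a non-issue. I would therefore keep the proof to two or three sentences.
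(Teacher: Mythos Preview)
Your proposal is correct and follows exactly the same approach as the paper: the paper's proof consists of the single observation that $\varpi(G)$ is open and fully invariant, followed by an appeal to Corollary~\ref{DS4.170}. Your version merely unpacks the reasons for full invariance and spells out the mechanism behind Corollary~\ref{DS4.170}, which is fine but not required.
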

\begin{proof} 
As mentioned above, the subgroup $\varpi(G)$ is open and fully invariant. Therefore, Corollary~\ref{DS4.170} applies.  
\end{proof}

If $G$ is a locally compact abelian group, for every $p\in\Prm$, the subgroup $G_p$ is compactly covered.
Since the closure $\overline{G_p}$ contains $c(B(G))=c(G)\cap B(G)$, and since $c(B(G))_p \ne \{0\}$ whenever $c(B(G))\ne \{0\}$,  $G_p$ is closed (equivalently, locally compact) if and only if $c(B(G))=\{0\}$ (i.e., $G$ contains no non-trivial compact connected subgroups). In particular, 
$G_p$ is closed precisely  when $G$ is totally disconnected.

\smallskip
Every periodic locally compact abelian group $G$ (i.e., $G=B(G)$ and $c(G)=\{0\}$) is a local product $\overset{loc}{\prod}_{p\in\Prm} (G_p,K_p)$, where $K\in\B(G)$ (see \cite{Braconnier}). Let us recall that this is the subgroup $$\left\{(x_p)_p\in\prod_{p\in\Prm}G_p\mid x_p\in K_p\ \text{for all but finitely many}\ p\in\Prm\right\}$$ of $\prod_{p\in\Prm}G_p$ endowed with the topology with respect to which $\prod_{p\in\Prm}K_p$ is open.

\begin{proof}[\bf Proof of Theorem~\ref{AT:AT}]
First we prove that, for the totally disconnected locally compact abelian group $G$,
\begin{equation}\label{Gpeq}
h_{top}(\phi)=\sum_{p\in \mathbb P} h_{top}(\phi\restriction_{G_{p}}).
\end{equation}
Since $c(G) =\{0\}$, $AT_0(G,B(G))$ holds, according to Proposition~\ref{red2}. 
Therefore, $h_{top}(\phi) = h_{top}(\phi\restriction_{B(G)})$. Since $G_p= B(G)_p$ for every $p\in\Prm$, we may assume without loss of generality that 
$G$ is periodic, i.e., $G= B(G)$. Hence, we can write $G$ as $G=\overset{loc}{\prod}_{p\in\Prm} (G_p,K_p)$, where $K_p\in\B(G_p)$ for every $p\in\Prm$ and $K=\prod_{p\in\Prm}K_p$. For every $p\in\Prm$, $G_p$ is fully invariant in $G$, so it makes sense to consider $\phi\restriction_{G_p}$. 

For each $n\in \N_+$ let $\mathbb P_n$ be the set of the first $n$ primes and consider the splitting 
$$G = G_{(n)}  \times H_n,\quad \text{where}\quad G_{(n)} = \prod_{p\in \mathbb P_n} G_p\quad\text{and}\quad H_n=\overset{loc}{\prod}_{p\in \mathbb P\setminus \mathbb P_n}(G_p,K_p).$$
Since both $G_{(n)} $ and $H_n$ are fully invariant in $G$, by Lemma~\ref{wAT} we have that
$$h_{top}(\phi\restriction_{B(G)}) = h_{top}(\phi\restriction_{G_{(n)}}) + h_{top}(\phi\restriction_{H_{n}})=
\sum_{p\in \mathbb P_n} h_{top}(\phi\restriction_{G_{p}})+h_{top}(\phi\restriction_{H_{n}}).$$
Since $h_{top}(\phi\restriction_{G_{p}})\geq 0$ for every $p\in\Prm$, this equality gives us \eqref{Gpeq} in both cases, when the series $\sum_{p\in \mathbb P} h_{top}(\phi\restriction_{G_{p}})$ converges and when it diverges. 

\medskip
We prove now the second part of the theorem.
\begin{enumerate}[(a)]
\item If $AT(G_p)$ holds for every for $p\in\Prm$, then also $AT(G)$ holds.
\item In case $G$ is periodic, $AT(G)$ holds if and only if $AT(G_p)$ holds for every $p\in\Prm$.
\end{enumerate}
First we assume that $G=B(G)$ is periodic and write $G=\overset{loc}{\prod}_{p\in\Prm} (G_p,K_p)$, where $K_p\in\B(G_p)$ for every $p\in\Prm$ and $ K = \prod_{p\in\Prm} K_p$. 

Assume that $AT(G)$ holds, fix $q\in\Prm$ and consider the group $G^* = \overset{loc}{\prod}_{p\in\Prm\setminus \{q\}} (G_p,K_p)$. Hence, $G = G^* \times G_q$. Therefore, $AT(G_p)$ follows from Proposition~\ref{neeew}. This concludes the proof of the additional implication in item (b).

Now assume that $AT(G_p)$ holds for every $p\in\Prm$. Let $\phi \in \End(G)$ and let $H$ be a $\phi$-invariant closed subgroup of $G$. Then $$H = \overset{loc}{\prod}_{p\in\Prm} (H_p,H_p \cap K_p),$$ where $H_p =H\cap G_p$. 
Indeed, $H$ is a periodic locally compact abelian group and for every $p\in\Prm$ the subgroup $H_p \cap K_p = H_p \cap K\in\B(H_p)$.
Moreover, $$G/H \cong \overset{loc}{\prod}_{p\in\Prm} (G_p/H_p, (K_p+H_p)/H_p),$$
where $(K_p+H_p)/H_p\cong H_p/(H_p\cap K_p)$.

By \eqref{Gpeq} applied to $G, \phi$, $G/H, \bar \phi$ and  $H, \phi\restriction_H$, we get 
 $$h_{top}(\phi)=\sum_{p\in \mathbb P} h_{top}(\phi\restriction_{G_{p}}), \ \ h_{top}(\phi_{G/H})=\sum_{p\in \mathbb P} h_{top}(\phi\restriction_{G_{p}/H_p}) \ \ \mbox{ and }\ \  h_{top}(\phi\restriction_H)=\sum_{p\in \mathbb P} h_{top}(\phi\restriction_{H_{p}}).$$

Since $AT(G_p)$ holds for every $p\in\Prm$, we have $$h_{top}(\phi\restriction_{G_p}) = h_{top}(\phi\restriction_{H_p}) + h_{top}(\phi\restriction_{G_p/H_p})$$ and we are done.

We end with the general case of the proof of (a), for a totally disconnected group $G$, supposing that $AT(G_p)$ holds for every $p\in\Prm$. 
Since $ c(G) =\{0\}$, $B(G)$ is a fully invariant open subgroup of $G$. Moreover, since $B(G)$ is periodic and $B(G)_p = G_p$ for every $p\in\Prm$, we deduce from the above argument that $AT(B(G))$ holds. By Proposition~\ref{AT:wrt:open} we conclude that $AT(G)$ holds as well. 
\end{proof}

Note that in the above proof the subgroups $G_{(n)} \times \prod_{p\in \mathbb P\setminus \mathbb P_n} K_p$ form an increasing chain of open subgroups of $G=\bigcup_{n\in\N} G_{(n)}$, but these subgroups need not be invariant.

\subsection{The Addition Theorem for $\Q_p^n$}

An important instance of Addition Theorem is given by the following example, where we have a locally compact abelian $p$-group which is not compact. 
A more general result will be given in Proposition~\ref{particularbutgeneral}, nevertheless we anticipate this particular case which can be checked directly, without any recourse to the highly non-trivial Theorem~\ref{yuzvinskip}. 

\begin{example}\label{roadforthevictory}
Let $G=\Z_p\times \Q_p$, for a prime $p$, and $\phi\in\End(G)$. Since $\Q_p=d(G)$, it is fully invariant, so in particular it is $\phi$-invariant.  We prove that 
\begin{equation}\label{ecco}
AT_0(\Z_p\times\Q_p,\Q_p)\ \text{holds}.
\end{equation}

Since $d(G)=\{0\}\times\Q_p$ is a fully invariant subgroup of $G$, there exist $\phi_1\in\End(\Z_p)$, $\phi_2=\phi\restriction_{\Q_p}\in\End(\Q_p)$, and a continuous endomorphism $\phi_3:\Z_p\to\Q_p$ such that for every $(x,y)\in G$, 
$$\phi(x,y)=(\phi_1(x),\phi_2(y)+\phi_3(x)).$$
Therefore, there exist $\xi_1\in\Z_p$ and $\xi_2,\xi_3\in\Q_p$ such that $\phi_i(z)=\xi_i z$ for $i\in\{1,2,3\}$.
Let $p^{\ell_i}=|\xi_i|_p$ with the convention that $\ell_i=-\infty$ in case $\xi_i=0$ since $|0|_p=0$. 

\medskip
Let us see that 
\begin{equation}\label{phi2}
h_{top}(\phi_2)=\begin{cases}0&\text{if}\ \ell_2\leq0,\\ \ell_2\log p & \text{if}\  \ell_2>0.\end{cases}
\end{equation}
In fact, consider the local base $\mathcal B=\{p^k\Z_p\mid k\in\N\}\subseteq\mathcal B(\Q_p).$
 Then, for every $k,n\in\N$, $$C_{n+1}(\phi_2,p^k\Z_p)=\begin{cases}p^k\Z_p &\text{if}\ \ell_2\leq 0,\\p^{k+n\ell_2}\Z_p& \text{if}\ \ell_2>0,\end{cases}$$ and so $$H_{top}(\phi_2,p^k\Z_p)=\begin{cases}0& \text{if}\ \ell_2\leq0,\\ \ell_2\log p&\text{if}\ \ell_2>0.\end{cases}$$
By Lemma~\ref{monotonia}, we obtain \eqref{phi2}.

Next we verify that 
\begin{equation}\label{phi}
h_{top}(\phi)=\begin{cases}0&\text{if}\ \ell_2\leq0\\ \ell_2\log p & \text{if}\  \ell_2>0.\end{cases}
\end{equation}
Consider the local base 
$\mathcal B=\{p^k\Z_p\times p^k\Z_p\mid k\in\N,\ k\geq\ell_2\}\subseteq\mathcal B(G)$.
For $U=p^k\Z_p\times p^k\Z_p\in\mathcal B$ and for every $n\in\N$, one has
$$C_{n+1}(\phi,U)=\begin{cases}U &\text{if}\ \ell_2\leq 0,\\ p^{k}\Z_p\times p^{k+n\ell_2}\Z_p &\text{if}\ \ell_2>0,\end{cases}$$
and so
$$[U:C_{n+1}(\phi,U)]=\begin{cases}1&\text{if}\ \ell_2\leq0, \\ p^{n\ell_2}&\text{if}\ \ell_2>0.\end{cases}$$
Hence, $$H_{top}(\phi,U)=\begin{cases}0&\text{if}\ \ell_2\leq0,\\ \ell_2\log p&\text{if}\ \ell_2>0.\end{cases}$$
By Lemma~\ref{monotonia}, we conclude that \eqref{phi} holds.

Now \eqref{phi2} and \eqref{phi} give \eqref{ecco}.

\smallskip
Consider $\bar\phi\in\End(G/\Q_p)$. Since $G/\Q_p\cong\Z_p$, by Corollary~\ref{invbase} we conclude $h_{top}(\bar\phi)=0$.
Hence, $AT_0(G,\phi,\Q_p)$ holds, as announced above.
\end{example}

\begin{proof}[\bf Proof of Theorem~\ref{ATQp}] 
Let $G=\Q_p^n$ and $\phi\in\End(\Q_p^n)$, and let $H$ be a $\phi$-invariant closed subgroup of $G$. Being a closed subgroup of $G$, $$H=\Z_p^m\times \Q_p^k\ \text{with}\ l=m+k\leq n.$$
Since $G$ is divisible, we may assume without loss of generality that $D(H)\subseteq G$. Moreover, $D(H)$ is a $\phi$-invariant closed subgroup of $G$. Indeed, being a closed subgroup, $H$ is a $\Z_p$-submodule of $\Q_p^n$, so its divisible hull $D(H)$ is a $\Q_p$-submodule of $G$;  the isomorphism $D(H)\cong\Q_p^l$ implies that $D(H)$ is locally compact, hence complete, and therefore $D(H)$ is a closed subgroup of $G$. 

Since $H$ is open in $D(H)$, by Corollary~\ref{new:lemma1} we have that 
\begin{equation}\label{ATeq1}
h_{top}(\phi\restriction_H)=h_{top}(\phi\restriction_{D(H)}).
\end{equation}
Since $D(H)/H$ is a discrete $\bar\phi_{G/H}$-invariant subgroup of $G/H$, and since the endomorphism induced by $\bar\phi_{G/H}$ on $(G/H)/(D(H)/H)$ is conjugated to $\bar\phi_{G/D(H)}$, by Lemma~\ref{conju}(b9 and by Proposition~\ref{fundamentalstep}, we have that
\begin{equation}\label{ATeq2}
h_{top}(\bar\phi_{G/H})=h_{top}(\bar\phi_{G/D(H)}).
\end{equation}

Therefore, it remains to show that $AT(G,\phi,D(H))$ holds, that is,
$$h_{top}(\phi)=h_{top}(\phi\restriction_{D(H)})+h_{top}(\bar\phi_{G/D(H)}).$$
Since $D(H)\cong\Q_p^l$ is divisible, then $G=D(H)\times L$, where $L\leq \Q_p^n$ and $L\cong\Q_p^{n-l}$.
Being $G $ a $\Q_p$-vector space, $\phi$ is a $\Q_p$-linear transformation, and since $D(H)$ and $L$ are $\Q_p$-linear subspaces of $\Q_p^n$ with $D(H)$ $\phi$-invariant, we have that $\phi$ is induced by a matrix $A\in M_n(\Q_p)$ of the form
$$A=\begin{pmatrix}
A_1 & 0\\
B & A_2
\end{pmatrix}$$
where $A_1\in M_l(\Q_p)$ induces $\phi\restriction_{D(H)}$ and $A_2\in M_{n-l}(\Q_p)$ induces $\phi_2:L\to L$, where $\phi_2$ is conjugated to $\bar\phi_{G/D(H)}$.
Since $A$ has the same eigenvalues of
$$\begin{pmatrix}
A_1 & 0\\
0 & A_2
\end{pmatrix},$$
which is the matrix of $\phi\restriction_{D(H)}\times \phi_2$,
by Theorem~\ref{yuzvinskip} and Lemma~\ref{conju}(b), we conclude that $$h_{top}(\phi)=h_{top}(\phi\restriction_{D(H)})+h_{top}(\phi_2)=h_{top}(\phi\restriction_{D(H)})+h_{top}(\bar\phi_{G/D(H)}).$$
This concludes the proof in view of \eqref{ATeq1} and \eqref{ATeq2}.
\end{proof}

We are not aware whether $AT(G)$ holds for every locally compact abelian $p$-group $G$ with $\mathrm{rank}_p(G)<\infty$
(see Question~\ref{ques1}).

\section{Locally compact abelian groups in $\mathfrak E_{<\infty}$ and $\mathfrak E_0$}\label{Mainsec}

\subsection{General facts on $\mathfrak E_{<\infty}$ and $\mathfrak E_0$}

The classes $\mathfrak E_0$ and $\mathfrak E_{<\infty}$ are obviously stable under taking direct summands. 
Moreover, they are also stable under taking extensions with respect to fully invariant closed subgroups satisfying the Addition Theorem in the sense of Definition~\ref{atdefinition}(b): 

\begin{lemma}\label{Lemma3space} 
Let $G$ be a locally compact abelian group with a fully invariant closed subgroup $H$ such that $AT(G,H)$ holds. 
\begin{enumerate}[(a)]
\item If $H\in \mathfrak E_{<\infty}$ and $G/H\in \mathfrak E_{<\infty}$, then $G \in \mathfrak E_{<\infty}$.
\item If $H\in \mathfrak E_0$ and $G/H\in \mathfrak E_0$, then $G \in \mathfrak E_0$.
\end{enumerate}
Assume that $AT_0(G,H)$ holds.
\begin{enumerate}[(a$'$)]
\item If $H\in \mathfrak E_{<\infty}$, then $G \in \mathfrak E_{<\infty}$.
\item If $H\in \mathfrak E_0$, then $G \in \mathfrak E_0$. 
\end{enumerate}
\end{lemma}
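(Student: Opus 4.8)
The plan is to simply unwind the definitions: $G\in\mathfrak E_{<\infty}$ means $h_{top}(\phi)<\infty$ for every $\phi\in\End(G)$, and $G\in\mathfrak E_0$ means $h_{top}(\phi)=0$ for every $\phi\in\End(G)$. So I would fix an arbitrary $\phi\in\End(G)$ and observe that, since $H$ is a fully invariant closed subgroup of $G$, both $\phi\restriction_H\in\End(H)$ and the induced endomorphism $\bar\phi_{G/H}\in\End(G/H)$ are well-defined continuous endomorphisms; this is the only place where the hypotheses on $H$ (full invariance plus closedness, the latter so that $G/H$ is a Hausdorff locally compact group) are used.

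For parts (a) and (b), the hypothesis $AT(G,H)$ gives $AT(G,\phi,H)$, i.e. the identity $h_{top}(\phi)=h_{top}(\phi\restriction_H)+h_{top}(\bar\phi_{G/H})$. In case (a), $H\in\mathfrak E_{<\infty}$ forces $h_{top}(\phi\restriction_H)<\infty$ and $G/H\in\mathfrak E_{<\infty}$ forces $h_{top}(\bar\phi_{G/H})<\infty$, so the sum is finite and $h_{top}(\phi)<\infty$; as $\phi$ was arbitrary, $\infty\notin\mathbf E_{top}(G)$, i.e. $G\in\mathfrak E_{<\infty}$. In case (b), both summands vanish, so $h_{top}(\phi)=0$, whence $G\in\mathfrak E_0$. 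Here one uses that $h_{top}$ takes values in $\R_{\geq0}\cup\{\infty\}$, so that the sum is finite (resp. zero) exactly when each summand is.

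For the primed statements, the hypothesis $AT_0(G,H)$ gives, for every $\phi\in\End(G)$, that $h_{top}(\phi)=h_{top}(\phi\restriction_H)$ (and $h_{top}(\bar\phi_{G/H})=0$, which is not even needed). Thus in (a$'$), $H\in\mathfrak E_{<\infty}$ yields $h_{top}(\phi)=h_{top}(\phi\restriction_H)<\infty$, so $G\in\mathfrak E_{<\infty}$; and in (b$'$), $H\in\mathfrak E_0$ yields $h_{top}(\phi)=h_{top}(\phi\restriction_H)=0$, so $G\in\mathfrak E_0$.

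There is no real obstacle: the statement is a formal consequence of the definitions of the two classes together with the additivity (resp. one-sided additivity) supplied by $AT(G,H)$ (resp. $AT_0(G,H)$). The only point deserving a word is the observation that $\phi\restriction_H$ and $\bar\phi_{G/H}$ make sense for every $\phi\in\End(G)$, which is exactly why the hypothesis is phrased with $H$ fully invariant and closed; and the elementary remark that non-negativity of topological entropy lets one pass between finiteness/vanishing of a sum and of its terms.
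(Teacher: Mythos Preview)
Your argument is correct and is exactly the intended one: the paper states this lemma without proof, treating it as an immediate consequence of the definitions of $\mathfrak E_{<\infty}$, $\mathfrak E_0$, $AT(G,H)$ and $AT_0(G,H)$, which is precisely what you have spelled out.
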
 

Note that for $G \in \mathfrak E_0$ the conjunction of $H\in \mathfrak E_0$ and $G/H\in \mathfrak E_0$ for a fully invariant closed subgroup $H$ obviously implies that $AT(G,H)$ holds.

\begin{remark}
One may ask whether the implications in Lemma~\ref{Lemma3space} can be inverted. To show that the answer is negative, at least in the cases (a) and (b), we make use of the examples of compact abelian $p$-groups $G\in \mathfrak E_0$ of weight $\mathfrak c$ from Remark~\ref{exotic:groups}.
Let us see that $G$ has a  fully invariant closed subgroup $H$ with $G/H \not \in \mathfrak E_{<\infty}$. 
Since $|\widehat G| = w(G)=\mathfrak c$, one has $r_p(\widehat G)= |\widehat G| = \mathfrak c$, and consequently $\widehat G[p] \cong \bigoplus_\mathfrak c \Z(p)$. Therefore, $G/pG \cong \Z(p)^\mathfrak c \not \in \mathfrak E_{<\infty}$ (see Example~\ref{FGR}).

\end{remark}

The following is a direct application of Lemma~\ref{new:lemma1}.

\begin{lemma}\label{application}
Let $G$ be a locally compact abelian group and endow $D(G)$ with the unique group topology that makes $G$ an open topological subgroup of $D(G)$. 
\begin{enumerate}[(a)]
\item If $D(G) \in \mathfrak{E}_{<\infty}$, then $G \in \mathfrak{E}_{<\infty}$. 
\item If $D(G) \in \mathfrak{E}_0$, then $G \in \mathfrak{E}_0$.
\end{enumerate}
\end{lemma}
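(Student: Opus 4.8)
The plan is to reduce both items, via Corollary~\ref{new:lemma1}, to the single equality $h_{top}(\phi) = h_{top}(\tilde\phi)$ relating an endomorphism of $G$ to its canonical extension to $D(G)$. First I would fix $\phi \in \End(G)$; by Corollary~\ref{new:lemma1} there is a continuous extension $\tilde\phi \in \End(D(G))$ of $\phi$ for which $AT_0(D(G),\tilde\phi,G)$ holds. Unwinding Definition~\ref{atdefinition}(b), the half of this statement that we need is $h_{top}(\tilde\phi) = h_{top}(\tilde\phi\restriction_G)$, and since $\tilde\phi$ extends $\phi$ we have $\tilde\phi\restriction_G = \phi$, whence $h_{top}(\tilde\phi) = h_{top}(\phi)$.

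The two items then follow at once. If $D(G) \in \mathfrak E_{<\infty}$ then $h_{top}(\tilde\phi) < \infty$, so $h_{top}(\phi) = h_{top}(\tilde\phi) < \infty$; as $\phi$ was an arbitrary element of $\End(G)$, this gives $\infty \notin \mathbf E_{top}(G)$, i.e. $G \in \mathfrak E_{<\infty}$, proving (a). If $D(G) \in \mathfrak E_0$ then $h_{top}(\tilde\phi) = 0$, so $h_{top}(\phi) = 0$; arbitrariness of $\phi$ again yields $\mathbf E_{top}(G) = \{0\}$, i.e. $G \in \mathfrak E_0$, proving (b).

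There is no real obstacle in this argument: the entire content is packaged in Corollary~\ref{new:lemma1}, which in turn rests on the fact that $D(G)$ is divisible (so every $\phi \in \End(G)$ does extend) and that $G$ is open in $D(G)$ (so the extension is continuous and Lemma~\ref{DS4.17} applies to give $AT_0$). The only point that deserves explicit mention is the uniformity over all $\phi \in \End(G)$: because the extension is guaranteed for every endomorphism, the displayed entropy equality holds for each $\phi$ separately, which is exactly what the definitions of $\mathfrak E_{<\infty}$ and $\mathfrak E_0$ require.
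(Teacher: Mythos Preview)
Your proof is correct and follows exactly the same approach as the paper, which simply states that the lemma is a direct application of Corollary~\ref{new:lemma1}. You have merely unpacked that application explicitly, using $AT_0(D(G),\tilde\phi,G)$ to get $h_{top}(\phi)=h_{top}(\tilde\phi)$ and then reading off both conclusions.
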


We are not aware if the implication in the conclusion of the Lemma~\ref{application} can be inverted, see Question~\ref{notaware}.

\subsection{Reduction to locally compact abelian $p$-groups}\label{proof:Main2}

Since we want to determine when a locally compact abelian group $G$ is in $\mathfrak E_{<\infty}$, the following lemma gives a sufficient condition in terms of the open fully invariant subgroup $\varpi(G)$ of $G$.

\begin{lemma}\label{last?}
Let $G$ be a locally compact abelian group.  If $\varpi(G) \in \mathfrak E_{<\infty}$, then $G \in \mathfrak E_{<\infty}$.
\end{lemma}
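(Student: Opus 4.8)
The plan is to deduce this from Proposition~\ref{red2} together with Lemma~\ref{Lemma3space}(a$'$). Recall that $\varpi(G)$ is an open fully invariant subgroup of $G$, so by Proposition~\ref{red2} we know that $AT_0(G,\varpi(G))$ holds. Thus the situation is precisely that covered by the primed version of the three-space lemma: we have a fully invariant closed subgroup $H=\varpi(G)$ of $G$ for which $AT_0(G,H)$ holds, and the hypothesis gives $H\in\mathfrak E_{<\infty}$.

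Concretely, I would argue as follows. Fix $\phi\in\End(G)$; we must show $h_{top}(\phi)<\infty$. Since $\varpi(G)$ is fully invariant, $\psi=\phi\restriction_{\varpi(G)}\in\End(\varpi(G))$ is well defined, and by the assumption $\varpi(G)\in\mathfrak E_{<\infty}$ we have $h_{top}(\psi)<\infty$. By Proposition~\ref{red2}, $AT_0(G,\phi,\varpi(G))$ holds, which in particular yields $h_{top}(\phi)=h_{top}(\phi\restriction_{\varpi(G)})=h_{top}(\psi)<\infty$. Since $\phi$ was arbitrary, $\infty\notin\mathbf E_{top}(G)$, i.e.\ $G\in\mathfrak E_{<\infty}$. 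Equivalently, one may simply invoke Lemma~\ref{Lemma3space}(a$'$) with $H=\varpi(G)$, the hypotheses of which are exactly $AT_0(G,\varpi(G))$ (from Proposition~\ref{red2}) and $\varpi(G)\in\mathfrak E_{<\infty}$ (the assumption).

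There is essentially no obstacle here: the content of the statement has already been packaged into Proposition~\ref{red2} (which handles the Addition Theorem for the open fully invariant subgroup $\varpi(G)$, via Corollary~\ref{DS4.170} and Lemma~\ref{DS4.17}) and into Lemma~\ref{Lemma3space}. The only point worth a line of comment is why $h_{top}(\bar\phi_{G/\varpi(G)})=0$, but this is immediate from $AT_0$ being the assertion that the quotient contributes zero entropy — ultimately because $G/\varpi(G)$ is discrete, so Remark~\ref{discrete=0} applies. Hence the proof is a two-step citation, and I would present it in exactly that compact form.
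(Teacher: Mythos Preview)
Your proposal is correct and matches the paper's own proof exactly: the paper simply states that the result follows from Lemma~\ref{Lemma3space}, since $AT_0(G,\varpi(G))$ holds by Proposition~\ref{red2}. Your expanded version spells out the same two-step citation with a bit more detail, but the approach is identical.
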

\begin{proof}
Follows from Lemma~\ref{Lemma3space}, as $AT_0(G,\varpi(G))$ holds by Proposition~\ref{red2}. 
\end{proof}

A locally compact abelian group $G$ can be identified with $\R^n\times G_0$ with the product topology, for some $n\in\N$ and with $\mathcal B(G_0)\neq\emptyset$. Since $\R^n$ contains no non-trivial subgroups, $B(G)\subseteq G_0$. On the other hand, $c(G)=\R^n\times K$, where $K$ is compact and connected,  so $\varpi(G)=B(G)+\R^n.$

While $B(G)$ is fully invariant in $G$, $\R^n$ is fully invariant in $G$ if and only if $B(G)$ is totally disconnected, that is, $c(B(G))=c(G)\cap B(G)=\{0\}$.
Under this assumption, $c(G)=\R^n$ and $\varpi(G)=B(G)\times\R^n$ topologically, and we see that $\varpi(G)\in\mathfrak E_{<\infty}$ when $B(G)\in \mathfrak E_{<\infty}$:

\begin{lemma}\label{Laaaast:lemma}
Let $G$ be a locally compact abelian group such that $c(B(G)) = \{0\}$. Then   
\begin{equation}\label{lasteq}
h_{top}(\phi)=h_{top}(\phi\restriction_{c(G)})+h_{top}(\phi\restriction_{B(G)})\ \text{for every}\ \phi\in\End(G).
\end{equation}
Consequently, if $B(G)\in \mathfrak E_{<\infty}$ then $G \in \mathfrak E_{<\infty}$. 
\end{lemma}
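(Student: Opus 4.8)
The plan is to reduce everything to a product situation and then invoke the weak Addition Theorem (Lemma~\ref{wAT}). First I would recall from the paragraph preceding the statement that a locally compact abelian group $G$ can be written as $\R^n\times G_0$ with $\mathcal B(G_0)\neq\emptyset$, that $B(G)\subseteq G_0$, and that $c(G)=\R^n\times K$ with $K$ compact connected; hence $\varpi(G)=B(G)+\R^n$. The hypothesis $c(B(G))=\{0\}$ forces $K\subseteq B(G)$ to be trivial (since $K$ is a compact connected subgroup contained in $G_0$, and $K\cap B(G)$ would be a compact connected subgroup of $B(G)$ — more directly, $c(B(G))=c(G)\cap B(G)$ so $c(G)\cap B(G)=\{0\}$ means $c(G)\cap G_0$ is totally disconnected, forcing $K=\{0\}$). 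Thus $c(G)=\R^n$, and $c(G)$ is now fully invariant in $G$ (being the connected component), while $B(G)$ is always fully invariant. So $\varpi(G)=c(G)\times B(G)=\R^n\times B(G)$ topologically, with both factors fully invariant in $\varpi(G)$.

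Next, fix $\phi\in\End(G)$. Since $c(G)=\R^n$ and $B(G)$ are both fully invariant, $\phi$ restricts to endomorphisms $\phi\restriction_{c(G)}\in\End(\R^n)$ and $\phi\restriction_{B(G)}\in\End(B(G))$, and $\phi\restriction_{\varpi(G)}$ splits as $\phi\restriction_{c(G)}\times\phi\restriction_{B(G)}$ on the topological direct product $\varpi(G)=\R^n\times B(G)$. Here $\R^n$ is isomorphic to $\R^n$ and $B(G)$ is totally disconnected (being compactly covered with trivial connected component $c(B(G))=\{0\}$), so Lemma~\ref{wAT} applies to the product $\R^n\times B(G)$ and gives
\[
h_{top}(\phi\restriction_{\varpi(G)}) = h_{top}(\phi\restriction_{c(G)}) + h_{top}(\phi\restriction_{B(G)}).
\]
On the other hand, $\varpi(G)$ is open and fully invariant in $G$, so $AT_0(G,\varpi(G))$ holds by Proposition~\ref{red2}; in particular $h_{top}(\phi)=h_{top}(\phi\restriction_{\varpi(G)})$. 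Combining these two equalities yields \eqref{lasteq}.

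Finally, for the consequence: if $B(G)\in\mathfrak E_{<\infty}$, then for every $\phi\in\End(G)$ we have $h_{top}(\phi\restriction_{B(G)})<\infty$. Also $h_{top}(\phi\restriction_{c(G)})<\infty$ automatically, since $c(G)\cong\R^n\in\mathfrak E_{<\infty}$ by Remark~\ref{Rrem}. By \eqref{lasteq}, $h_{top}(\phi)<\infty$, so $G\in\mathfrak E_{<\infty}$. I do not expect a serious obstacle here; the only point requiring a little care is justifying that $c(B(G))=\{0\}$ together with $B(G)$ compactly covered really gives that $B(G)$ is totally disconnected (so that Lemma~\ref{wAT} is applicable to it) and that $\varpi(G)$ genuinely splits as a \emph{topological} product $\R^n\times B(G)$ — both of which are recorded in the paragraph immediately preceding the lemma and can simply be cited.
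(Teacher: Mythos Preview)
Your proposal is correct and follows essentially the same route as the paper: reduce to $\varpi(G)$ via Proposition~\ref{red2}, use the hypothesis $c(B(G))=\{0\}$ to get $c(G)\cong\R^n$ and the topological splitting $\varpi(G)=c(G)\times B(G)$ with both factors fully invariant, then apply Lemma~\ref{wAT}; the consequence is deduced from \eqref{Rn} exactly as you do. The paper's presentation differs only cosmetically (it assumes $G=\varpi(G)$ without loss of generality rather than carrying $\varpi(G)$ explicitly).
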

\begin{proof}
First we note that $AT_0(G,\varpi(G))$ holds by Proposition~\ref{red2}, hence $h_{top}(\phi) = h_{top}(\phi\restriction_{\varpi(G)})$.
Moreover, the open subgroup $G_1 = \varpi(G)$ of $G$ satisfies $B(G_1) = B(G)$ and $c(G_1) = c(G)$, so we may assume without loss of generality that $G = \varpi(G)$. 

As $c(G) = \R^n \times c(B(G))$ for some $n\in\N$, our hypothesis implies that $c(G) =\R^n$ and $c(G)\cap B(G)=c(B(G))=\{0\}$.
This gives the isomorphism $G \cong B(G) \times c(G)$. Since both $c(G)$ and $B(G)$ are fully invariant in $G$,  \eqref{lasteq} holds, 
by Lemma~\ref{wAT}.

The last assertion follows from \eqref{lasteq} and \eqref{Rn}.
\end{proof}

The following result covers Theorem~\ref{red1}. It permits the crucial reduction to the case of locally compact abelian $p$-groups.

\begin{proposition}\label{proofMain2}
Let $G$ be a locally compact abelian group such that $c(B(G)) = \{0\}$. Then: 
\begin{enumerate}[(a)] 
\item $G \in \mathfrak E_{<\infty}$ whenever $G_p\in \mathfrak E_{<\infty}$  for every $p\in\Prm$ and $G_p \in \mathfrak E_0$ for almost all $p\in\Prm$;
\item $G \in \mathfrak E_0$ whenever $G$ is totally disconnected and $G_p \in \mathfrak E_0$ for every $p\in\Prm$.
\end{enumerate}
 If $G = \varpi(G)$, then also the converse implications hold. In particular, $G \in \mathfrak E_{<\infty}$ if and only if $B(G) \in \mathfrak E_{<\infty}$.
\end{proposition}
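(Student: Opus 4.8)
The plan is to descend from $G$ to the open, fully invariant subgroup $\varpi(G)=B(G)+c(G)$, reduce to a periodic group, and then feed everything into the decomposition formula of Theorem~\ref{AT:AT}. Since $AT_0(G,\varpi(G))$ holds by Proposition~\ref{red2}, one has $h_{top}(\phi)=h_{top}(\phi\restriction_{\varpi(G)})$ for every $\phi\in\End(G)$; also $B(\varpi(G))=B(G)$ and $\varpi(G)$ has the same topological $p$-components as $G$, and the hypothesis $c(B(G))=\{0\}$ gives a topological splitting $\varpi(G)=B(G)\times\R^n$ with $c(G)=\R^n$ (exactly the computation in the proof of Lemma~\ref{Laaaast:lemma}). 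Combining this with $\R^n\in\mathfrak E_{<\infty}\setminus\mathfrak E_0$ from \eqref{Rn}, Lemma~\ref{wAT}, Lemma~\ref{Laaaast:lemma} and the stability of $\mathfrak E_{<\infty}$ under direct summands already yields the final ``in particular'': $G\in\mathfrak E_{<\infty}\iff B(G)\in\mathfrak E_{<\infty}$. Now $B(G)$ is periodic (compactly covered, and totally disconnected because $c(B(G))=\{0\}$), hence a local product $\overset{loc}{\prod}_{p}(G_p,K_p)$ in which each $G_p$ is closed and fully invariant and splits off as a topological direct summand. Theorem~\ref{AT:AT} then gives $h_{top}(\psi)=\sum_p h_{top}(\psi\restriction_{G_p})$ for every $\psi\in\End(B(G))$, and likewise $h_{top}(\phi)=\sum_p h_{top}(\phi\restriction_{G_p})$ for every $\phi\in\End(G)$ when $G$ is totally disconnected.

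The direct implications are then formal. For (a): under the hypothesis, for $\psi\in\End(B(G))$ the sum $\sum_p h_{top}(\psi\restriction_{G_p})$ has all but finitely many terms zero (the $p$ with $G_p\in\mathfrak E_0$) and the remaining finitely many terms finite (the $p$ with $G_p\in\mathfrak E_{<\infty}$), so $h_{top}(\psi)<\infty$; hence $B(G)\in\mathfrak E_{<\infty}$ and so $G\in\mathfrak E_{<\infty}$. For (b): if $G$ is totally disconnected, then for $\phi\in\End(G)$ each $\phi\restriction_{G_p}$ lies in $\End(G_p)$ and has zero entropy, so $h_{top}(\phi)=\sum_p h_{top}(\phi\restriction_{G_p})=0$, i.e.\ $G\in\mathfrak E_0$. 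For the converses assume $G=\varpi(G)$, so $G=B(G)\times\R^n$. If $G\in\mathfrak E_0$ then $G$ is totally disconnected by Theorem~\ref{origin}(b), whence $n=0$, $G=B(G)$, and each $G_p$ — a direct summand of $G$ — lies in $\mathfrak E_0$; this is the converse of (b). If $G\in\mathfrak E_{<\infty}$ then $B(G)\in\mathfrak E_{<\infty}$, and each $G_p$, again a direct summand of $G$, lies in $\mathfrak E_{<\infty}$.

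What remains — and what I expect to be the main obstacle — is the converse of (a): that $G\in\mathfrak E_{<\infty}$ (with $G=\varpi(G)$) forces $G_p\in\mathfrak E_0$ for almost all $p$. I would argue by contradiction. If $P=\{p:G_p\notin\mathfrak E_0\}$ were infinite, then for each $p\in P$ pick $\phi_p\in\End(G_p)$ with $h_{top}(\phi_p)>0$, hence $h_{top}(\phi_p)\ge\log 2$ by \eqref{htoplogN}; assembling these (and $0$ on the remaining components) into a single $\phi\in\End(B(G))$ would give $h_{top}(\phi)=\sum_{p\in P}h_{top}(\phi_p)=\infty$ by Theorem~\ref{AT:AT}, contradicting $B(G)\in\mathfrak E_{<\infty}$. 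The delicate point is that this ``diagonal'' assembly is a \emph{continuous} endomorphism of $\overset{loc}{\prod}_{p}(G_p,K_p)$ only if $\phi_p(K_p)\subseteq K_p$ for almost all $p\in P$, so the $\phi_p$ must be chosen compatibly with the (essentially canonical) compact open subgroups $K_p$ of the local product — equivalently, one must produce, for each bad prime $p$, a positive-entropy endomorphism of $G_p$ stabilising $K_p$, or renormalise the family $\{K_p\}$ without changing $B(G)$. Everything else is bookkeeping with Theorem~\ref{AT:AT}, Lemma~\ref{wAT}, Proposition~\ref{red2} and stability under direct summands.
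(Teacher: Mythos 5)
Your argument follows the same route as the paper for everything the paper's proof actually establishes: the direct implications via $AT_0(G,\varpi(G))$ (Proposition~\ref{red2}), Lemma~\ref{Laaaast:lemma} and the formula $h_{top}(\phi)=h_{top}(\phi\restriction_{c(G)})+\sum_{p}h_{top}(\phi\restriction_{G_p})$ coming from Theorem~\ref{AT:AT}; the converse of (b) and the ``for every $p$'' half of the converse of (a) via the splitting $G=c(G)\times B(G)$ and stability under direct summands; and the last assertion via Lemma~\ref{Laaaast:lemma}. One caution on ordering: the equivalence $G\in\mathfrak E_{<\infty}\Leftrightarrow B(G)\in\mathfrak E_{<\infty}$ is not available ``already'' where you first assert it, because the implication $G\in\mathfrak E_{<\infty}\Rightarrow B(G)\in\mathfrak E_{<\infty}$ needs $B(G)$ to be a direct summand of $G$, i.e.\ $G=\varpi(G)$; without that hypothesis this is exactly the open Question~\ref{ques00}. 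Since you do invoke $G=\varpi(G)$ in the later paragraph, this is only a matter of placement.

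The piece you leave open, namely that $G\in\mathfrak E_{<\infty}$ with $G=\varpi(G)$ forces $G_p\in\mathfrak E_0$ for almost all $p$, is precisely the piece the paper's proof is silent about: the paper only derives $G_p\in\mathfrak E_{<\infty}$ for every $p$ from the direct-summand argument and never addresses the ``almost all'' clause. So you have not overlooked an argument present in the paper; moreover, your worry about the diagonal assembly is substantive and cannot be circumvented. Consider $G=\overset{loc}{\prod}_{p\in\Prm}(\Q_p,\Z_p)$: this is a periodic locally compact abelian group with $G=\varpi(G)=B(G)$, $c(B(G))=\{0\}$, $\rank_p(G_p)=1$, and $G_p=\Q_p\notin\mathfrak E_0$ for every $p$ (Example~\ref{Qpp}). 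Yet every $\phi\in\End(G)$ restricts on each fully invariant $G_p$ to multiplication by some $a_p\in\Q_p$, and $\phi$ is determined by these restrictions since $\bigoplus_p G_p$ is dense; continuity of $\phi$ at $0$ forces $a_p\Z_p\subseteq\Z_p$ for all but finitely many $p$, so $h_{top}(\phi\restriction_{G_p})=\log\max\{1,|a_p|_p\}$ vanishes for all but finitely many $p$ and $h_{top}(\phi)<\infty$ by the formula of Theorem~\ref{AT:AT}. Hence no family of positive-entropy endomorphisms of the components assembles into a continuous endomorphism of $G$ (no renormalization of the $K_p$ helps), your contradiction scheme cannot be completed, and this example in fact puts the ``almost all'' clause of the stated converse — equivalently the ``only if'' part of Theorem~\ref{red1}(b) — itself in doubt. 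In short: your proof covers exactly what the paper's proof covers; the residual clause is a genuine gap, but it is one shared with, indeed inherited from, the paper.
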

\begin{proof}
If $\phi\in\End(G)$, by Theorem~\ref{Gp} and Lemma~\ref{Laaaast:lemma} we have $h_{top}(\phi)=h_{top}(\phi\restriction_{c(G)})+\sum_{p\in \mathbb P} h_{top}(\phi\restriction_{G_{p}})$. The conclusion follows since $h_{top}(\phi\restriction_{G_p})\in\log\N_+\cup\{\infty\}$ for every $p\in\Prm$ by \eqref{htoplogN}. 

Assuming $G = \varpi(G)$, we have a direct product $G = c(G) \times B(G)$, where $c(G)\cong\R^n$ for some $n\in\N$. Since for every $p\in\Prm$ the fully invariant closed subgroup $G_p=B(G)_p$ is a direct summand of $B(G)$, we have that $G_p$ is a direct summand of $G$. Therefore, $G \in \mathfrak E_0$ implies that $c(G)=0$ in view of \eqref{Rn}, and that each $G_p \in \mathfrak E_0$ by Lemma~\ref{wAT}. Similarly, $G \in \mathfrak E_{<\infty}$ implies $G_p\in \mathfrak E_{<\infty}$ for every $p\in\Prm$.

The last assertion follows from the fact that under the assumption $G = \varpi(G)$ the subgroup $B(G)$ is a direct summands of $G$, so Lemma 
\ref{Laaaast:lemma} applies. 
\end{proof}

\subsection{Locally compact abelian $p$-groups in $\mathfrak{E}_{<\infty}$ and in $\mathfrak{E}_0$}

In the following example we see that $\mathfrak{E}_{<\infty}$ contains locally compact abelian $p$-groups $G$ with $\rank_p(G)$ infinite, and that the same abelian group $G$ may be endowed with two different locally compact topologies $\tau_1,\tau_2$ with $(G,\tau_1)\in\mathfrak E_0$ and $(G,\tau_2)\not\in\mathfrak E_{<\infty}$.

\begin{example}\label{FGR}
Any torsion abelian $p$-group of infinite rank (e.g., the group $\Z(p)^{\N}$) equipped with the discrete topology belongs to $\mathfrak{E}_0 \subseteq \mathfrak{E}_{<\infty}$ by Remark~\ref{discrete=0} and it has finite $p$-rank.

On the other hand, $\Z(p)^{\N}$ endowed with the compact product topology does not belong to $\mathfrak E_{<\infty}$. In fact, $\Z(p)^\N\cong(\Z(p)^\N)^\N$ and, letting $K=\Z(p)^\N$, the one-sided left Bernoulli shift $$\sigma:K^\N\to K^\N,\ (x_0, x_1, x_2, \ldots)\mapsto (x_1, x_2, x_3, \ldots),$$ has infinite topological entropy \cite{DG-islam,DS}. A similar argument shows that the compact group $\Z(p)^{\kappa}$ belongs to $\mathfrak E_{<\infty}$ for no infinite cardinal $\kappa$ (just note that $\Z(p)^\kappa\cong(\Z(p)^\kappa)^\N$). 
\end{example}

In the following proposition we see how one can compute the topological entropy of a continuous endomorphism of a locally compact abelian $p$-group $G$ with $\rank_p(G)<\infty$.

Note that for $G=\Z_p^{n_1} \times \Q_p^{n_2} \times \Z(p^\infty)^{n_3} \times F_p$, with $n_1,n_2,n_3\in\N$ and $F_p$ a finite $p$-group, 
the subgroup 
$$d(G)=\Q_p^{n_2}\times \Z(p^\infty)^{n_3}$$ is fully invariant in $G$, hence the subgroup  $t(d(G)))=\Z(p^\infty)^{n_3}$ is fully invariant in $G$.
Moreover,  $$d(G)/t(d(G))\cong d(G/t(G))\cong\Q_p^{n_2}$$ and for elements of $\End(\Q_p^{n_2})$ the topological entropy can be explicitly computed by applying Theorem~\ref{yuzvinskip}.

\begin{proposition}\label{particularbutgeneral}
Let $G$ be a locally compact abelian $p$-group with $\rank_p(G)<\infty$ and $\phi\in\End(G)$. Then $$h_{top}(\phi)=h_{top}\left(\bar\phi_{G/t(G)}\restriction_{d(G/t(G))}\right).$$
\end{proposition}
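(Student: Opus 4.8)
The plan is to decompose $G$ along the fully invariant chain $t(d(G)) \leq d(G) \leq G$ (using Theorem~\ref{p-rank-finite} to write $G \cong \Z_p^{n_1}\times\Q_p^{n_2}\times\Z(p^\infty)^{n_3}\times F_p$) and to show that each vertical quotient contributes nothing to the topological entropy except the $\Q_p^{n_2}$-piece. Concretely, I would first pass to $G/t(G)$: the subgroup $t(G) = \Z(p^\infty)^{n_3}\times F_p$ is fully invariant, and since $t(G)$ is the union of its finite characteristic subgroups (each $t(G)[p^k]$ is finite), $\phi\restriction_{t(G)}$ has a local base of $\phi$-invariant finite subgroups, so $h_{top}(\phi\restriction_{t(G)})=0$ by Corollary~\ref{invbase} (or Remark~\ref{discrete=0} after noting $t(G)$ is discrete). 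Hence it suffices to prove $h_{top}(\phi) = h_{top}(\bar\phi_{G/t(G)})$, i.e. $AT^0(G,\phi,t(G))$, and then to identify $h_{top}(\bar\phi_{G/t(G)})$ with the entropy on its maximal divisible part.

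For the first equality I would split into the torsion-free and the mixed parts. Writing $G' = G/t(G) \cong \Z_p^{n_1}\times\Q_p^{n_2}$, which is torsion-free, the point is that the quotient map $G \to G'$ does not lose entropy. One clean route: $t(G)$ is contained in $B(G)$-type behaviour, but more directly, since $\Z(p^\infty)^{n_3}$ is divisible and $F_p$ finite, I would argue $h_{top}(\phi) = h_{top}(\phi\restriction_{d(G)}) + 0$ and $h_{top}(\bar\phi_{G/t(G)}) = h_{top}(\bar\phi_{d(G)/t(d(G))})$ by combining: (i) $AT^0$ with respect to the open-versus-compact structure is not available, so instead I use that $G/d(G) \cong \Z_p^{n_1}\times F_p$ is a profinite group with a local base of $\bar\phi$-invariant subgroups (Corollary~\ref{invbase} applied to the $\Z_p^{n_1}$ factor, modulo the finite $F_p$), giving $h_{top}(\bar\phi_{G/d(G)})=0$; (ii) therefore by monotonicity (Lemma~\ref{mon}(a)) $h_{top}(\phi\restriction_{d(G)}) \leq h_{top}(\phi)$ while the reverse inequality is the content of the Addition-type argument. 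The cleanest way to get $h_{top}(\phi) = h_{top}(\phi\restriction_{d(G)})$ is to observe $d(G) = \Q_p^{n_2}\times\Z(p^\infty)^{n_3}$ is open in $D(G)$-type completions only in pathological cases, so instead I would directly use the product structure: modulo $t(G)$ we have $G/t(G) \cong \Z_p^{n_1}\times\Q_p^{n_2}$ with $\Q_p^{n_2} = d(G/t(G))$ a fully invariant (hence $\bar\phi$-invariant) closed subgroup, the quotient $\cong \Z_p^{n_1}$ carries zero entropy by Corollary~\ref{invbase}, so applying Theorem~\ref{ATQp}-style reasoning or, better, the direct computation in Example~\ref{roadforthevictory} generalized, $h_{top}(\bar\phi_{G/t(G)}) = h_{top}(\bar\phi_{G/t(G)}\restriction_{\Q_p^{n_2}})$.

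So the skeleton is: (1) $h_{top}(\phi\restriction_{t(G)})=0$ because $t(G)$ is torsion discrete with a local base of finite $\phi$-invariant subgroups; (2) $h_{top}(\phi) = h_{top}(\bar\phi_{G/t(G)})$ — this is an instance of $AT^0(G,\phi,t(G))$, which I would derive from Corollary~\ref{fundamentalstepcor}(a) if $G$ splits as (torsion-free totally disconnected) $\times$ (torsion discrete); in general one writes $G \cong \Z_p^{n_1}\times\Q_p^{n_2}\times\Z(p^\infty)^{n_3}\times F_p$, groups the divisible torsion and finite factors into $G_2 := \Z(p^\infty)^{n_3}\times F_p$ (discrete, torsion, fully invariant) and the rest into $G_1 := \Z_p^{n_1}\times\Q_p^{n_2}$ (totally disconnected, torsion-free), and applies Corollary~\ref{fundamentalstepcor}(a) to conclude $AT^0(G,G_2)$; (3) on $G/t(G) \cong \Z_p^{n_1}\times\Q_p^{n_2}$, the factor $\Q_p^{n_2} = d(G/t(G))$ is a fully invariant closed subgroup with totally-disconnected torsion-free complement $\Z_p^{n_1}$, and since $\Z_p^{n_1}$ is compact with a local base of $\bar\phi$-invariant subgroups (Corollary~\ref{invbase}), $h_{top}$ of the induced map on $\Z_p^{n_1}$ vanishes; combined with Corollary~\ref{new:lemma1} / the explicit computation as in Example~\ref{roadforthevictory}, one gets $h_{top}(\bar\phi_{G/t(G)}) = h_{top}(\bar\phi_{G/t(G)}\restriction_{d(G/t(G))})$. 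Chaining (1)–(3) yields the claim.

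\textbf{Main obstacle.} The delicate point is step (2)/(3): showing that quotienting by the (divisible torsion) part $\Z(p^\infty)^{n_3}$ and restricting past the compact part $\Z_p^{n_1}$ genuinely preserve the full topological entropy, since $\phi$ need not respect the product decomposition $G_1\times G_2$ — it only respects the fully invariant subgroups $t(G)$, $d(G)$, $t(d(G))$, and the induced "off-diagonal" homomorphisms (as in the matrix $\begin{pmatrix}A_1 & 0\\ B & A_2\end{pmatrix}$ appearing in the proof of Theorem~\ref{ATQp}) must be controlled. The key technical fact that makes this work is precisely Proposition~\ref{fundamentalstep}/Corollary~\ref{fundamentalstepcor}, which handles the semidirect-product-style twisting of a totally disconnected locally compact abelian group by a discrete one; the only care needed is to verify that in our situation the relevant factor really is \emph{discrete} and \emph{torsion} (resp.\ the other factor torsion-free or reduced), which follows from the explicit form \eqref{HHR} of $G$.
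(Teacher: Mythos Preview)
Your proposal is correct and follows essentially the same route as the paper: first apply Corollary~\ref{cor:fundamentalstep}(a) with $G_1=\Z_p^{n_1}\times\Q_p^{n_2}$ and $G_2=t(G)=\Z(p^\infty)^{n_3}\times F_p$ to obtain $h_{top}(\phi)=h_{top}(\bar\phi_{G/t(G)})$, and then reduce $\Z_p^{n_1}\times\Q_p^{n_2}$ to its divisible part. The only place where your sketch should be sharpened is step~(3): the paper makes the passage explicit by extending $\bar\phi_{G/t(G)}$ to the divisible hull $\Q_p^{n_1}\times\Q_p^{n_2}$ via Corollary~\ref{new:lemma1} and then invoking Theorem~\ref{ATQp} (together with $h_{top}$ vanishing on the $\Z_p^{n_1}$-part by Corollary~\ref{invbase}), whereas your allusion to ``Example~\ref{roadforthevictory} generalized'' would amount to reproving that theorem.
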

\begin{proof}
By Theorem~\ref{p-rank-finite} we can assume that $G=\Z_p^{n_1}\times\Q_p^{n_2}\times \Z(p^\infty)^{n_3}\times F_p$, for some $n_1,n_2,n_3\in\N$ and a finite $p$-group $F_p$.

Since $t(G)=\Z(p^\infty)^{n_3}\times F_p$, Corollary~\ref{cor:fundamentalstep} entails
\begin{equation}\label{eq3}
h_{top}(\phi)=h_{top}(\bar\phi_{G/t(G)}), 
\end{equation}
where $\bar\phi_{G/t(G)}\in\End(G/t(G))$ is induced by $\phi$ and $G/t(G)\cong\Z_p^{n_1}\times\Q_p^{n_2}$.
For $$H=G/t(G)\cong\Z_p^{n_1}\times \Q_p^{n_2}$$ note that $$D(H)\cong\Q_p^{n_1}\times\Q_p^{n_2}.$$ 
Since the subgroup $d(H)\cong\Q_p^{n_2}$ is fully invariant in $H$, $\bar\phi_{G/t(G)}$ is conjugated to
a $\psi\in\End(\Z_p^{n_1}\times \Q_p^{n_2})$ such that, for every $(x,y)\in\Z_p^{n_1}\times \Q_p^{n_2}$,
$$\psi(x,y)=(\psi_1(x),\psi_2(y)+\psi_3(x)),$$  
where $\psi_1\in\End(\Z_p^{n_1})$, $\psi_2\in\End(\Q_p^{n_2})$ and $\psi_3:\Z_p^{n_1}\to\Q_p^{n_2}$ is a continuous homomorphism.
Note that $\psi_2$ is conjugated to $\bar\phi_{H}\restriction_{d(H)}$.
By Lemma~\ref{conju}(b), 
\begin{equation}\label{eq2}
h_{top}(\psi)=h_{top}(\bar\phi_{H})\quad \text{and}\quad h_{top}(\psi_2)=h_{top}(\bar\phi_{H}\restriction_{d(H)}).
\end{equation}

Since the subgroup $\Z_p^{n_1}$ is open in $\Q_p^{n_1}$, Lemma~\ref{new:lemma1} gives that $\psi_1$ extends to $\tilde\psi_1\in\End(\Q_p^{n_1})$, $\psi$ extends to $\tilde \psi\in\End(\Q_p^{n_1}\times\Q_p^{n_2})$, 
\begin{equation}\label{eq1}
h_{top}(\psi_1)=h_{top}(\tilde\psi_1)\quad\text{and}\quad h_{top}(\psi)=h_{top}(\tilde\psi).
\end{equation}
Since $\Q_p^{n_2}$ is $\tilde\psi$-invariant, $\tilde\psi\restriction_{\Q_p^{n_2}}=\psi_2$ and the endomorphism induced by $\tilde \psi$ on $\Q_p^{n_1}\times\Q_p^{n_2}/\Q_p^{n_2}$ is conjugated to $\tilde\psi_1$, by Theorem~\ref{ATQp} and Lemma~\ref{conju}(b), we deduce that
$$h_{top}(\tilde\psi)=h_{top}(\psi_2)+h_{top}(\tilde\psi_1).$$ 
Since $h_{top}(\psi_1)=0$ by Corollary~\ref{invbase}, from \eqref{eq3}, \eqref{eq2} and \eqref{eq1} we have that $$h_{top}(\phi)=h_{top}(\psi)=h_{top}(\tilde\psi)=h_{top}(\psi_2)=h_{top}(\bar\phi_H\restriction_{d(H)}),$$
as required.
\end{proof}

We are ready to prove Theorem~\ref{main1}.

\begin{proof}[\bf Proof of Theorem~\ref{main1}]
By Theorem~\ref{p-rank-finite} we can assume that $G=\Z_p^{n_1}\times \Q_p^{n_2}\times \Z(p^\infty)^{n_3}\times F_p$, for some $n_1,n_2,n_3\in\N$ and a finite $p$-group $F_p$.

By Proposition~\ref{particularbutgeneral}, for $\phi\in\End(G)$, we obtain $h_{top}(\phi)=h_{top}\left(\bar\phi_{G/t(G)}\restriction_{d(G/t(G))}\right)$.
This value is finite since $d(G/t(G))\cong\Q_p^{n_2}$ and $\Q_p^{n_2}\in\mathfrak E_{<\infty}$ by Theorem~\ref{yuzvinskip}. Hence, $G\in\mathfrak E_{<\infty}$.

\smallskip
We verify now that $G\in \mathfrak E_0$ precisely when $n_2=0$.
If $n_2 \neq 0$, then $G\not\in\mathfrak E_0$ because $\Q_p^{n_2}\not\in\mathfrak E_0$ by Example~\ref{Qpp} or by Theorem~\ref{yuzvinskip}.
Assume that $n_2=0$, so $$G=\Z_p^{n_1}\times\Z(p^\infty)^{n_3}\times F_p.$$ 
By Remark~\ref{discrete=0} we have that $t(G)=\Z(p^\infty)^{n_3}\times F_p\in\mathfrak E_0$, while $\Z_p^{n_1}\in\mathfrak E_0$ by Corollary~\ref{invbase}. Since $AT(G,t(G))$ holds in this case by Corollary~\ref{cor:fundamentalstep}, we conclude that $G\in\mathfrak E_0$ by Lemma~\ref{Lemma3space}.
\end{proof}

Combining Proposition~\ref{proofMain2}  with Theorem~\ref{main1} we obtain the proof of Theorem~\ref{main2}:

\begin{proof}[\bf Proof of Theorem~\ref{main2}]
Let $G$ be a locally compact abelian with $c(B(G))=\{0\}$ and $\mathrm{rank}_p(G_p)<\infty$ for every $p\in\Prm$.

(a) If $\sum_{p\in \Prm} \rk_{\Z_p}(d(G_p))< \infty$, then $\rk_{\Z_p}(d(G_p))<\infty$  for every $p\in\Prm$ and $\rk_{\Z_p}(d(G_p))=0$ for almost all $p\in\Prm$.
By Theorem~\ref{main1}, this means that $G_p\in \mathfrak E_{<\infty}$  for every $p\in\Prm$ and $G_p \in \mathfrak E_0$ for almost all $p\in\Prm$, and so $G \in \mathfrak E_{<\infty}$ by Proposition~\ref{proofMain2}. 
 
(b) If $c(G)=\{0\}$ and $\rk_{\Z_p}(d(G_p)) = 0$ for every $p\in\Prm$, then $G_p\in\mathfrak E_0$ for every $p\in\Prm$ by Theorem~\ref{main1}, and so $G\in\mathfrak E_{0}$ by Proposition~\ref{proofMain2}.

Analogously one can prove the converse implications under the assumption $G=\varpi(G)$.
\end{proof}

\section{The scale}\label{scalesec}

If $G$ is a totally disconnected locally compact group and $\phi\in\End(G)$, the \emph{scale} of $\phi$ was defined in \cite{Willis} as
$$s(\phi)=\min\{[\phi(U):U\cap\phi(U)]\mid U\in\B(G)\}.$$
By \cite[Proposition 4.8]{GBV}, we have that always
\begin{equation}\label{s<h}
\log s(\phi)\leq h_{top}(\phi).
\end{equation}

A subgroup $U\in\B(G)$ is \emph{minimizing} for $s(\phi)$ if the minimum in the definition is attained at $U$, that is, $s(\phi)=[\phi(U):U\cap\phi(U)]$. 
Obviously, every $\phi$-invariant subgroup $ U\in\B(G)$ is minimizing and witnesses the equality $s(\phi)=1$; in particular,  
$s(\phi)=1$ for every $\phi \in \End(G)$, if $G$ is either compact or discrete. 

The \emph{nub} $\nub(\phi)$ of $\phi$ is the intersection of all minimizing subgroups of $s(\phi)$; by \cite[Corollary 4.6 and Corollary 4.11]{GBV}  $\nub(\phi)$ is a compact subgroup of $G$ such that $\phi(\nub(\phi))=\nub(\phi)$
and
$$h_{top}(\phi)=\log s(\phi)\ \text{if and only if}\ \nub(\phi)=\{1\}.$$
 In \cite[Theorem 3.32]{UdoW} several conditions equivalent to $\nub(\phi)=\{1\}$ are given in case $\phi$ is a topological automorphism.

\medskip
The scale can be computed also by using the following useful formula, called M\"oller's formula.

\begin{fact}[{See \cite[Proposition 18]{Willis}}]
Let $G$ be a totally disconnected locally compact group and $\phi\in\End(G)$. If $U\in\B(G)$, then
$$s(\phi)=\lim_{n\to\infty}[\phi^n(U):U\cap \phi^n(U)]^{\frac{1}{n}}.$$
\end{fact}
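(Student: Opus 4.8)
The plan is to reduce the identity to the case of a subgroup $V$ tidy for $\phi$, where the equality holds termwise, using the structure theory of the scale; the passage to an arbitrary $U$ is then handled by a soft comparison between compact open subgroups of finite index. Throughout, for $X\in\B(G)$ and $n\in\N_+$ write $a_n(X)=[\phi^n(X):X\cap\phi^n(X)]$, so that $a_1(X)=[\phi(X):X\cap\phi(X)]$, $s(\phi)=\min_{X\in\B(G)}a_1(X)$, and $a_n(X)\geq s(\phi^n)$ for every $X$ straight from the definition of the scale.

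First I would prove the \emph{comparison lemma}: if $W\leq U$ in $\B(G)$ and $k=[U:W]$, then $\tfrac1k\,a_n(U)\leq a_n(W)\leq k^2\,a_n(U)$ for every $n$, the constants not depending on $n$. This is bookkeeping with indices of compact open subgroups: since $\phi^n$ is a homomorphism, $\phi^n(U)=\bigcup_i\phi^n(u_i)\phi^n(W)$ for $u_1,\dots,u_k$ a transversal of $W$ in $U$, so $[\phi^n(U):\phi^n(W)]\leq k$; using the chains $W\cap\phi^n(W)\leq W\cap\phi^n(U)\leq U\cap\phi^n(U)\leq\phi^n(U)$ and $W\cap\phi^n(W)\leq\phi^n(W)\leq\phi^n(U)$, the natural injections of coset spaces show $[U\cap\phi^n(U):W\cap\phi^n(U)]\leq k$ and $[W\cap\phi^n(U):W\cap\phi^n(W)]\leq k$, whence $[U\cap\phi^n(U):W\cap\phi^n(W)]\leq k^2$; and multiplicativity of the index along these chains gives $a_n(U)\cdot[U\cap\phi^n(U):W\cap\phi^n(W)]=[\phi^n(U):\phi^n(W)]\cdot a_n(W)$. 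Together with $1\leq[\phi^n(U):\phi^n(W)]\leq k$ this yields the lemma.

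Then I would invoke the structure theory of the scale for continuous endomorphisms \cite{Willis}: there is $V\in\B(G)$ tidy for $\phi$, hence tidy---and so minimizing---for every power $\phi^n$, and the scale is multiplicative, $s(\phi^n)=s(\phi)^n$. Thus $a_n(V)=s(\phi^n)=s(\phi)^n$, so $a_n(V)^{1/n}=s(\phi)$ for all $n$: the formula holds exactly for $U=V$. For arbitrary $U\in\B(G)$, put $W=U\cap V\in\B(G)$ with $k_1=[U:W]$ and $k_2=[V:W]$ finite. The comparison lemma applied to $W\leq V$ gives $\tfrac1{k_2}s(\phi)^n\leq a_n(W)\leq k_2^2\,s(\phi)^n$, so $a_n(W)^{1/n}\to s(\phi)$; applied to $W\leq U$ it gives $k_1^{-2}a_n(W)\leq a_n(U)\leq k_1 a_n(W)$, so $a_n(U)^{1/n}\to s(\phi)$ as well. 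Hence the limit exists and equals $s(\phi)$.

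I expect the main obstacle to be the facts from tidy subgroup theory quoted above. For topological automorphisms it is classical (Willis) that a subgroup tidy for $\phi$ is tidy for every $\phi^n$ and that $s(\phi^n)=s(\phi)^n$; for general continuous endomorphisms $\phi$, where $\phi$ need not be injective and $\phi(U)$ need not be open, these facts rest on the technically heavier tidy subgroup theory for endomorphisms---which is no doubt why the statement is recorded above as a Fact with a reference rather than proved. The rest, namely the comparison lemma and the final squeeze, is elementary, and in particular it avoids any appeal to sub- or super-multiplicativity of $n\mapsto a_n(U)$, the role of Fekete's lemma being taken over by the comparison with the tidy $V$.
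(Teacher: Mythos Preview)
The paper does not supply a proof of this statement; it is recorded as a Fact with a citation to \cite[Proposition~18]{Willis}. Your sketch is correct and follows the standard route. The comparison lemma is sound index bookkeeping: the identity $a_n(U)\cdot[U\cap\phi^n(U):W\cap\phi^n(W)]=[\phi^n(U):\phi^n(W)]\cdot a_n(W)$ comes from computing $[\phi^n(U):W\cap\phi^n(W)]$ along two chains, and the bounds $1\leq[\phi^n(U):\phi^n(W)]\leq k$ and $1\leq[U\cap\phi^n(U):W\cap\phi^n(W)]\leq k^2$ follow from the natural injections you indicate. The reduction to a tidy $V$, where $a_n(V)=s(\phi^n)=s(\phi)^n$, is exactly the structural input that drives M\"oller's formula, and you rightly flag that for continuous endomorphisms (as opposed to automorphisms) the existence of a $V$ tidy for every power and the multiplicativity $s(\phi^n)=s(\phi)^n$ require the heavier machinery of \cite{Willis}. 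There is nothing to compare against in the paper itself.
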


In the abelian case $\phi^n(U)/(U\cap \phi^n(U))\cong (U+\phi^n(U))/U$, hence M\"oller's formula can be written as
\begin{equation}\label{MF}
s(\phi)=\lim_{n\to\infty}\left|\frac{U+\phi^n(U)}{U}\right|^{\frac{1}{n}}.
\end{equation}

The counterpart of the Addition Theorem, namely $s(\phi)=s(\phi\!\restriction_H)s(\bar\phi_{G/H})$, does not hold for the scale (see \cite[Remark 4.6]{BDG}). 
Nevertheless, by applying the formula in \eqref{MF} we can easily extend the following monotonicity needed below to all continuous endomorphisms. 

\begin{lemma}\label{monots}
Let $G$ be a totally disconnected locally compact abelian group, $\phi\in\End(G)$ and $H$ a $\phi$-invariant closed subgroup of $G$. Then 
$s(\phi)\geq\max\{s(\phi\restriction_H),s(\bar\phi_{G/H})\}$.
\end{lemma}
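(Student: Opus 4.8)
The statement is a monotonicity property of the scale under restriction to an invariant closed subgroup and under passage to the corresponding quotient. The natural strategy is to fix $U \in \B(G)$ and exploit M\"oller's formula in the form \eqref{MF}, applied separately to $G$, to $H$, and to $G/H$ with the compact open subgroups induced by $U$.

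First I would handle the restriction $\phi\restriction_H$. Given any $V \in \B(H)$, one can find $U \in \B(G)$ with $U \cap H = V$ (since $H$ is closed, $\B(G)$ induces a local base on $H$). Because $\phi^n(U \cap H) \subseteq \phi^n(U) \cap H$ for every $n$, one gets an injection
$$\frac{V + \phi^n(V)}{V} \hookrightarrow \frac{(U+\phi^n(U)) \cap H}{U \cap H} \hookrightarrow \frac{U+\phi^n(U)}{U},$$
so $\left|\frac{V+\phi^n(V)}{V}\right| \le \left|\frac{U+\phi^n(U)}{U}\right|$ for every $n$. Taking $n$-th roots and letting $n \to \infty$, \eqref{MF} yields $s(\phi\restriction_H) \le s(\phi)$.

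Next I would treat the quotient $\bar\phi_{G/H}$. Let $\pi: G \to G/H$ be the canonical projection; then $\pi(U) \in \B(G/H)$ for $U \in \B(G)$, and $\{\pi(U) : U \in \B(G)\}$ is a local base of $G/H$. Since $\pi$ commutes with the endomorphisms, $\pi(U) + \bar\phi^n(\pi(U)) = \pi(U + \phi^n(U))$, so $\pi$ induces a surjection
$$\frac{U+\phi^n(U)}{U} \twoheadrightarrow \frac{\pi(U)+\bar\phi^n(\pi(U))}{\pi(U)},$$
whence $\left|\frac{\pi(U)+\bar\phi^n(\pi(U))}{\pi(U)}\right| \le \left|\frac{U+\phi^n(U)}{U}\right|$; again taking $n$-th roots and passing to the limit via \eqref{MF} gives $s(\bar\phi_{G/H}) \le s(\phi)$. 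Combining the two inequalities proves the lemma.

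The only genuinely delicate point is the bookkeeping that M\"oller's formula \eqref{MF} holds for \emph{every} $U \in \B(G)$ (not just minimizing ones), so that one is free to choose $U$ compatibly with $H$ and with $\pi$; but this is exactly the content of the abelian form of M\"oller's formula recorded just above the statement, so no real obstacle remains. One should also double-check that in the quotient case the relevant abelian identity $\bar\phi^n(\pi(U))/(\pi(U)\cap\bar\phi^n(\pi(U))) \cong (\pi(U)+\bar\phi^n(\pi(U)))/\pi(U)$ is being used consistently, which is immediate from the second isomorphism theorem for abelian groups.
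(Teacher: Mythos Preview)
Your proof is correct and follows essentially the same route as the paper's: both arguments fix $U\in\B(G)$, compare the M\"oller quotients $\frac{V+\phi^n(V)}{V}$ (with $V=U\cap H$) and $\frac{\pi(U)+\bar\phi^n(\pi(U))}{\pi(U)}$ to $\frac{U+\phi^n(U)}{U}$ via the natural injection and surjection respectively, and then pass to the limit using \eqref{MF}. The paper writes out the second-isomorphism-theorem identifications explicitly, whereas you phrase them as injections and surjections, but the content is identical.
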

\begin{proof}
By \cite[Corollary 2.5]{GBV}, $\B(H)=\{U\cap H\mid U\in\B(G)\}$ and $\pi(\B(G))=\{\pi(U)\mid U\in\B(G)\}\subseteq\B(G/H)$ is a local base of $G/H$.

Let $U\cap H\in\B(H)$. Then, for every $n\in\N$, 
\begin{align*}
\frac{U+\phi^n(U)}{U}\geq \frac{U+\phi^n(U\cap H)}{U}&=\frac{U+(U\cap H)+\phi^n(U\cap H)}{U}\\&\cong\frac{(U\cap H)+\phi^n(U\cap H)}{U\cap((U\cap H)+\phi^n(U\cap H))}=\frac{(U\cap H)+\phi^n(U\cap H)}{U\cap H}.
\end{align*}
To conclude that $s(\phi\restriction_H)\leq s(\phi)$, apply \eqref{MF}.

Let now $\pi(U)\in\pi(\B(G))$. Then $U+\phi^n(U)/U$ has as a quotient
$$\frac{U+\phi^n(U)}{(U+H)\cap(U+\phi^n(U))}\cong\frac{U+\phi^n(U)+H}{U+H}\cong\frac{\pi(U+\phi^n(U)+H)}{\pi(U+H)}=\frac{\pi(U+\phi^n(U)}{\pi(U)}=\frac{\pi(U)+(\bar\phi_{G/H})^n(\pi(U))}{\pi(U)}.$$
To conclude that $s(\bar\phi_{G/H})\leq s(\phi)$, apply \eqref{MF}.
\end{proof}

The $p$-adic Yuzvinski's formula for the scale was given for topological automorphisms of $\Q_p^n$ in \cite[Theorem 5.2]{BDG} (see also \cite{Gl} for more general results) and can be generalized to continuous endomorphisms applying the same argument.

\begin{theorem}\label{yuzs}
Let $n\in\N$ and $\phi\in\End(\Q_p^n)$. Then $s(\phi)=\prod_{|\lambda_i|_p>1}|\lambda_i|_p$,
where $\{\lambda_i\mid i\in\{1,\ldots,n\}\}$ are the eigenvalues of $\phi$ in some finite extension $K_p$ of $\Q_p$.
\end{theorem}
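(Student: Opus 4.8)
The plan is to reduce the statement to the case of topological automorphisms, which is \cite[Theorem 5.2]{BDG}, by splitting off the ``nilpotent part'' of $\phi$ via Fitting's lemma. We may assume $n\geq 1$. Regarding $\phi$ as a $\Q_p$-linear endomorphism of $\Q_p^n$, Fitting's lemma gives a decomposition $\Q_p^n=N\oplus M$ with $N=\ker(\phi^n)$ and $M=\Im(\phi^n)$, both $\phi$-invariant $\Q_p$-subspaces, such that $\phi\restriction_N$ is nilpotent and $\phi\restriction_M$ is a $\Q_p$-linear automorphism of $M\cong\Q_p^k$, where $k=\dim_{\Q_p}M$; such a $\Q_p$-linear automorphism is automatically a topological automorphism. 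As finite-dimensional $\Q_p$-vector spaces, $N$ and $M$ are locally compact and complete, hence closed subgroups of $\Q_p^n$, and under the canonical topological isomorphism $\Q_p^n\cong N\times M$ the map $\phi$ corresponds to $\phi\restriction_N\times\phi\restriction_M$.

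The key elementary ingredient is the multiplicativity of the scale under such products: if $\psi_i\in\End(H_i)$ for totally disconnected locally compact abelian groups $H_i$, then $s(\psi_1\times\psi_2)=s(\psi_1)\,s(\psi_2)$. Indeed, choosing $U_i\in\B(H_i)$ and $U=U_1\times U_2\in\B(H_1\times H_2)$, for every $m$ one has $(\psi_1\times\psi_2)^m(U)=\psi_1^m(U_1)\times\psi_2^m(U_2)$, so $(U+(\psi_1\times\psi_2)^m(U))/U$ is isomorphic to the product of $(U_1+\psi_1^m(U_1))/U_1$ and $(U_2+\psi_2^m(U_2))/U_2$; taking cardinalities, $m$-th roots, and the limit, M\"oller's formula \eqref{MF} yields the claim. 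Applying this with $H_1=N$ and $H_2=M$ gives $s(\phi)=s(\phi\restriction_N)\,s(\phi\restriction_M)$.

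It then remains to evaluate the two factors. For $\phi\restriction_N$, nilpotency gives $(\phi\restriction_N)^m=0$ for $m=\dim_{\Q_p}N$, hence $(U+(\phi\restriction_N)^{m'}(U))/U$ is trivial for all $U\in\B(N)$ and all $m'\geq m$, so $s(\phi\restriction_N)=1$ by \eqref{MF} (equivalently, $U+(\phi\restriction_N)(U)+\dots+(\phi\restriction_N)^{m-1}(U)$ is a $\phi\restriction_N$-invariant compact open subgroup). For $\phi\restriction_M$, being a topological automorphism of $M\cong\Q_p^k$, \cite[Theorem 5.2]{BDG} gives $s(\phi\restriction_M)=\prod_{|\mu_j|_p>1}|\mu_j|_p$, where the $\mu_j$ are the eigenvalues of $\phi\restriction_M$ in a suitable finite extension of $\Q_p$. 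Finally, the characteristic polynomial of $\phi$ factors as that of $\phi\restriction_N$ times that of $\phi\restriction_M$, and all eigenvalues of $\phi\restriction_N$ are $0$; hence the eigenvalues $\lambda_i$ of $\phi$ with $|\lambda_i|_p>1$ are exactly the eigenvalues $\mu_j$ of $\phi\restriction_M$ with $|\mu_j|_p>1$, and we conclude
$$s(\phi)=s(\phi\restriction_N)\,s(\phi\restriction_M)=\prod_{|\mu_j|_p>1}|\mu_j|_p=\prod_{|\lambda_i|_p>1}|\lambda_i|_p.$$

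I expect the only point requiring real care to be the correct handling of Fitting's decomposition (that $N$ and $M$ are closed $\phi$-invariant subgroups over which $\phi$ genuinely splits as a direct product) together with the multiplicativity of the scale; both become routine once M\"oller's formula is invoked. As an alternative that sidesteps the full strength of \cite[Theorem 5.2]{BDG}, one could combine the upper bound $s(\phi)\leq\prod_{|\lambda_i|_p>1}|\lambda_i|_p$, coming from \eqref{s<h} and Theorem~\ref{yuzvinskip}, with the lower bound $s(\phi)\geq s(\phi\restriction_V)$ given by Lemma~\ref{monots}, where $V\leq\Q_p^n$ is the $\phi$-invariant $\Q_p$-rational subspace on which every eigenvalue has $p$-adic absolute value $>1$ (obtained by grouping generalized eigenspaces, using that Galois-conjugate eigenvalues have equal $p$-adic absolute value), together with the direct computation $s(\phi\restriction_V)=|\det(\phi\restriction_V)|_p=\prod_{|\lambda_i|_p>1}|\lambda_i|_p$ coming from a lattice $U\in\B(V)$ with $U\subseteq(\phi\restriction_V)(U)$.
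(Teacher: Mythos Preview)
Your proof is correct. The paper itself does not give a detailed proof of this theorem: it simply remarks, in the sentence preceding the statement, that the formula ``was given for topological automorphisms of $\Q_p^n$ in \cite[Theorem 5.2]{BDG} \ldots\ and can be generalized to continuous endomorphisms applying the same argument.'' In other words, the paper's approach is to rerun the computation from \cite{BDG} in the (mildly) more general setting of endomorphisms.

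Your route is genuinely different and, in a sense, cleaner: rather than reproving the formula from scratch, you reduce to the automorphism case by Fitting's decomposition $\Q_p^n=N\oplus M$, prove multiplicativity of the scale for a direct product of endomorphisms via M\"oller's formula \eqref{MF}, observe that the nilpotent factor has scale $1$, and then invoke \cite[Theorem 5.2]{BDG} on $M$ as a black box. This isolates exactly the new content needed to pass from automorphisms to endomorphisms (namely, that the nilpotent part contributes nothing) and avoids reopening the proof in \cite{BDG}. The alternative you sketch at the end---bounding $s(\phi)$ above via \eqref{s<h} and Theorem~\ref{yuzvinskip}, and below via Lemma~\ref{monots} applied to the $\Q_p$-rational ``expanding'' subspace---is also valid and has the merit of making the matching with the topological entropy formula transparent.
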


From Theorem~\ref{yuzvinskip} and Theorem~\ref{yuzs}, we get immediately the following equality.

\begin{corollary}\label{h=sp}
Let $n\in\N$ and $\phi\in\End(\Q_p^n)$. Then $h_{top}(\phi)=\log s(\phi)$.
\end{corollary}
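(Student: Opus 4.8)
The plan is to derive the identity directly from the two Yuzvinski-type formulas recorded immediately above, with no additional machinery. By Theorem~\ref{yuzs} one has $s(\phi)=\prod_{|\lambda_i|_p>1}|\lambda_i|_p$, where $\lambda_1,\dots,\lambda_n$ are the eigenvalues of $\phi$ in a suitable finite extension $K_p$ of $\Q_p$ (counted with multiplicity); by Theorem~\ref{yuzvinskip} the topological entropy of the same $\phi$ is $h_{top}(\phi)=\sum_{|\lambda_i|_p>1}\log|\lambda_i|_p$. Applying $\log$ to the product formula and using that the logarithm sends products to sums gives $\log s(\phi)=\sum_{|\lambda_i|_p>1}\log|\lambda_i|_p=h_{top}(\phi)$. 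The two right-hand sides range over the very same index set $\{i : |\lambda_i|_p>1\}$, so there is nothing to reconcile: they are term-by-term logarithms of one another. The edge cases are automatic — if $n=0$, or if no eigenvalue has $p$-adic absolute value exceeding $1$, then the empty product is $1$ and the empty sum is $0$, so both sides vanish.

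Since all the genuine content lies in Theorems~\ref{yuzvinskip} and~\ref{yuzs}, which I may assume, there is essentially no obstacle in this step; it is precisely the ``immediate'' consequence asserted in the statement of Corollary~\ref{h=sp}. If one preferred an argument not routed through both explicit formulas, the natural alternative would be the nub criterion: by \cite{GBV} one has $h_{top}(\phi)=\log s(\phi)$ exactly when $\nub(\phi)=\{1\}$, so it would suffice to verify that every $\phi\in\End(\Q_p^n)$ has trivial nub, which would in turn require analysing the compact $\phi$-stable subgroups of $\Q_p^n$ arising as intersections of minimizing subgroups. That route is strictly more delicate than the one-line computation, so I would keep the direct combination of the two formulas as the proof and, at most, record the nub viewpoint as a remark. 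This clean equality $h_{top}(\phi)=\log s(\phi)$ on $\Q_p^n$ is also exactly the input needed later, via Proposition~\ref{particularbutgeneral}, toward Theorem~\ref{main4}.
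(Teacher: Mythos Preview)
Your proof is correct and matches the paper's approach exactly: the paper states that the corollary follows immediately from Theorems~\ref{yuzvinskip} and~\ref{yuzs}, which is precisely the one-line computation you give.
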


We extend Corollary~\ref{h=sp} to all locally compact abelian $p$-groups of finite $\rank_p$ in the next result which covers Theorem~\ref{main4}.

\begin{theorem}\label{h=srp}
Let $G$ be a locally compact abelian $p$-group with $\rank_p(G)<\infty$ and let $\phi\in\End(G)$. Then $h_{top}(\phi)=\log s(\phi)$ and $\nub(\phi)=\{0\}$.
\end{theorem}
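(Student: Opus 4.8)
The plan is to determine $s(\phi)$ by \emph{squeezing} it between two quantities that end up coinciding, so as to bypass any direct computation of the scale (which would be awkward, since $\phi$ may carry unbounded ``off-diagonal'' components $\Q_p^{n_2}\to\Z(p^\infty)^{n_3}$). By Theorem~\ref{p-rank-finite} I would first write $G=\Z_p^{n_1}\times\Q_p^{n_2}\times\Z(p^\infty)^{n_3}\times F_p$; then $t(G)=\Z(p^\infty)^{n_3}\times F_p$ is a closed $\phi$-invariant subgroup, $G/t(G)\cong\Z_p^{n_1}\times\Q_p^{n_2}$ is totally disconnected locally compact abelian, and $d(G/t(G))$ is topologically isomorphic to $\Q_p^{n_2}$. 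Write $\bar\phi=\bar\phi_{G/t(G)}$ and fix a conjugate $\psi\in\End(\Q_p^{n_2})$ of $\bar\phi\restriction_{d(G/t(G))}$ under such an isomorphism. Since the scale is visibly a conjugacy invariant (a topological isomorphism carries compact open subgroups to compact open subgroups), one has $s(\bar\phi\restriction_{d(G/t(G))})=s(\psi)$, and $h_{top}(\bar\phi\restriction_{d(G/t(G))})=h_{top}(\psi)$ by Lemma~\ref{conju}(b).

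On the entropy side, Proposition~\ref{particularbutgeneral} gives $h_{top}(\phi)=h_{top}(\bar\phi\restriction_{d(G/t(G))})=h_{top}(\psi)$, and Corollary~\ref{h=sp} gives $h_{top}(\psi)=\log s(\psi)$, so $h_{top}(\phi)=\log s(\psi)$ is a finite real number. On the scale side, I would invoke the monotonicity Lemma~\ref{monots} twice: applied to the $\phi$-invariant closed subgroup $t(G)$ of $G$ it yields $s(\phi)\ge s(\bar\phi)$, and applied to the $\bar\phi$-invariant closed subgroup $d(G/t(G))$ of $G/t(G)$ it yields $s(\bar\phi)\ge s(\bar\phi\restriction_{d(G/t(G))})=s(\psi)$; hence $s(\phi)\ge s(\psi)$. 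Combining this with the general inequality \eqref{s<h}, that is $\log s(\phi)\le h_{top}(\phi)$, produces the chain
$$\log s(\psi)=h_{top}(\psi)=h_{top}(\phi)\ge\log s(\phi)\ge\log s(\psi),$$
forcing all of these to coincide; in particular $h_{top}(\phi)=\log s(\phi)$ (and, as a by-product, $s(\phi)=s(\psi)$). Finally, $\nub(\phi)=\{0\}$ follows at once, because by \cite[Corollary 4.6 and Corollary 4.11]{GBV} (recalled above) the equality $h_{top}(\phi)=\log s(\phi)$ holds exactly when $\nub(\phi)$ is trivial.

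The only step that demands a little care is the identification $h_{top}(\phi)=h_{top}(\psi)$, but that is precisely Proposition~\ref{particularbutgeneral} together with the isomorphism $d(G/t(G))\cong\Q_p^{n_2}$; the rest is a routine assembly of monotonicity and the $p$-adic Yuzvinski-type formulas already at hand. What one might \emph{expect} to be the main obstacle — an independent upper bound $s(\phi)\le s(\psi)$, for which M\"oller's formula evaluated at a naive compact open subgroup is contaminated by the unbounded homomorphisms $\Q_p^{n_2}\to\Z(p^\infty)^{n_3}$ — is never confronted, since the squeeze above delivers that bound for free out of \eqref{s<h} and $h_{top}(\phi)=\log s(\psi)$.
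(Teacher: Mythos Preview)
Your proof is correct and follows essentially the same squeeze argument as the paper: reduce $h_{top}(\phi)$ to $h_{top}(\bar\phi_{G/t(G)}\restriction_{d(G/t(G))})$ via Proposition~\ref{particularbutgeneral}, bound $s(\phi)$ from below by $s(\bar\phi_{G/t(G)}\restriction_{d(G/t(G))})$ using Lemma~\ref{monots} (applied once to the quotient and once to the subgroup), invoke Corollary~\ref{h=sp} on $\Q_p^{n_2}$, and close with~\eqref{s<h}. The only difference is that you spell out the two applications of Lemma~\ref{monots} and the conjugacy step explicitly, whereas the paper compresses them into a single line.
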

\begin{proof}
By Theorem~\ref{p-rank-finite} we can assume that $G=\Z_p^{n_1}\times\Q_p^{n_2}\times \Z(p^\infty)^{n_3}\times F_p$, for some $n_1,n_2,n_3\in\N$ and a finite $p$-group $F_p$.
By Proposition~\ref{particularbutgeneral},
$h_{top}(\phi)=h_{top}(\bar\phi_{G/t(G)}\restriction_{d(G/t(G))})$, where $d(G/t(G))\cong\Q_p^{n_2}$.
By Lemma~\ref{monots} and Corollary~\ref{h=sp}, $$\log s(\phi)\geq \log s(\bar\phi_{G/t(G)}\restriction_{d(G/t(G))})=h_{top}(\bar\phi_{G/t(G)}\restriction_{d(G/t(G))}).$$ 
Since $\log s(\phi)\leq h_{top}(\phi)$ by \eqref{s<h}, we obtain the thesis.
\end{proof}

\section{Final comments and open questions}\label{Hsec}

\subsection{The abelian case}

We start with a question related to Lemma~\ref{application}.

\begin{question}\label{notaware}
Let $G$ be a locally compact abelian group and endow $D(G)$ with the unique group topology that makes $G$ an open topological subgroup of $D(G)$. 
\begin{enumerate}[(a)]
\item Does $G \in \mathfrak{E}_{<\infty}$ imply $D(G) \in \mathfrak{E}_{<\infty}$?
\item Does $G \in \mathfrak{E}_0$ imply $D(G) \in \mathfrak{E}_0$?
\end{enumerate}
\end{question}

According to Theorem~\ref{origin}(a), a locally compact abelian group $G\in \mathfrak E_{<\infty}$ is  finite-dimensional. This motivates us to focus on finite-dimensional locally compact abelian groups in the sequel.

Theorem~\ref{red1} leaves open the following questions. A positive answer to both items would completely reduce the problem of understanding the structure of the locally compact abelian groups in $\mathfrak E_{<\infty}$ to the totally disconnected case. 

\begin{question}\label{ques0}
Suppose that $G$ is a finite-dimensional locally compact abelian group. 
\begin{enumerate}[(a)]
\item Does $G/c(G) \in \mathfrak E_{<\infty}$ imply $G \in \mathfrak E_{<\infty}$?
\item Does $G \in \mathfrak E_{<\infty}$ imply $G/c(G) \in \mathfrak E_{<\infty}$?
\end{enumerate}
\end{question}

According to Theorem~\ref{origin}(a), the answer to item (a) is affirmative for compact abelian groups.
On the other hand, the answer to item (b) is not known even for compact abelian groups (see \cite[Question 7.3]{DS}). 

\medskip
We do not know whether the implication in Lemma~\ref{last?} can be inverted: 

\begin{question}\label{ques07}
Does $G \in \mathfrak E_{<\infty}$ imply $\varpi (G) \in \mathfrak E_{<\infty}$ for a  locally compact abelian group $G$?
\end{question}

A positive answer to this question would allow us to work in the case when $G=\varpi(G)$.
Assuming also that $c(B(G))=\{0\}$ (i.e., $B(G)$ is periodic), in Proposition~\ref{proofMain2} we saw that $G\in\mathfrak E_{<\infty}$ if and only if $B(G) \in \mathfrak E_{<\infty}$. 
So the study of the locally compact abelian groups in $\mathfrak E_{<\infty}$ would be reduced to the case of periodic locally compact abelian groups, for which Theorem~\ref{prp:prod} gives a further reduction to locally compact abelian $p$-groups.

\smallskip
Question~\ref{ques07} remains open even in the totally disconnected case, when $\varpi(G) = B(G)$:

\begin{question}\label{ques00}
Suppose that $G$ is a totally disconnected locally compact abelian group. Does $G \in \mathfrak E_{<\infty}$ imply $B(G) \in \mathfrak E_{<\infty}$?
\end{question}

The answer is positive whenever $G$ is divisible, since in this case every continuous endomorphism of $B(G)$ extends to an endomorphism of $G$, as $G$ is divisible and $B(G)$ is an open subgroup of $G$.  

\medskip
Theorem~\ref{ATQp}, stating that the Addition Theorem holds for $\Q_p^n$, leaves open the following general question.

\begin{question}\label{ques1}
Does $AT(G)$ hold for every locally compact abelian $p$-group $G$? What about locally compact abelian $p$-groups with $\mathrm{rank}_p(G)<\infty$?   
\end{question}

According to Theorem~\ref{AT:AT}, an affirmative answer to the first part of the above question would imply that $AT(G)$ holds for every totally disconnected locally compact abelian group.
We conjecture that the answer is affirmative at least in the second more restrictive version.

\subsection{The non-abelian case}

Finally, we report a few comments regarding the non-abelian case. 

Following \cite{lg}, for a compact $p$-group $G$ and $n \in \N$, consider the subgroup $$\Omega_n(G)=\overline{\langle \{ g\mid g^{p^n}=1 \} \rangle}, $$ which is obviously fully invariant in $G$. 
\begin{lemma}\label{Lemma:heis}
If $G$ is a compact $p$-group such that $\mathcal B=\{\Omega^n(G)\mid n\in\N\}\subseteq \B(G)$ is a local base of $G$, then $G\in \mathfrak{E}_0$. 
\end{lemma}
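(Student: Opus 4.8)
The plan is to exploit the fact that the $\Omega^n(G)$ are \emph{fully invariant} closed subgroups forming a local base, so that every $\phi\in\End(G)$ restricts to each of them, and then to show directly that the cotrajectories $C_m(\phi,\Omega^n(G))$ cannot shrink too fast. First I would fix $\phi\in\End(G)$ and $n\in\N$, and set $U=\Omega^n(G)\in\B(G)$. Since $\Omega^n(G)$ is fully invariant, $\phi(U)\subseteq U$, hence $U\subseteq\phi^{-1}(U)\subseteq\phi^{-2}(U)\subseteq\cdots$, and therefore $C_m(\phi,U)=U\cap\phi^{-1}(U)\cap\cdots\cap\phi^{-m+1}(U)=U$ for every $m\in\N_+$. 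Consequently $[U:C_m(\phi,U)]=1$ for all $m$, so by the formula \eqref{htoptdlc} we get $H_{top}(\phi,U)=\lim_{m\to\infty}\frac{\log[U:C_m(\phi,U)]}{m}=0$.

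Next I would pass from the individual subgroups to $h_{top}(\phi)$. Since $\mathcal B=\{\Omega^n(G)\mid n\in\N\}$ is by hypothesis a local base of $G$ contained in $\B(G)$, Lemma~\ref{monotonia} (together with \eqref{htoptdlc}) gives
\[
h_{top}(\phi)=\sup\{H_{top}(\phi,U)\mid U\in\mathcal B\}=\sup\{0\}=0.
\]
As $\phi\in\End(G)$ was arbitrary, this yields $\mathbf E_{top}(G)=\{0\}$, i.e.\ $G\in\mathfrak E_0$. In fact this is just the non-abelian instance of Corollary~\ref{invbase}: the hypothesis exactly produces a local base of $\phi$-invariant (indeed fully invariant) compact open subgroups, which is the situation covered there, so one could alternatively simply invoke Corollary~\ref{invbase} once the full invariance of $\Omega^n(G)$ is recorded.

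I do not expect any genuine obstacle here; the content of the lemma is entirely in the hypothesis that the $\Omega^n(G)$ form a local base, and the only things to check are the trivial facts that $\Omega^n(G)$ is fully invariant (immediate from its definition as the closure of the subgroup generated by the elements killed by $p^n$, which is preserved by any endomorphism) and that a chain of $\phi$-invariant compact open subgroups forces vanishing entropy. The one small point to be careful about is that $\Omega^n(G)$ is genuinely a subgroup and lies in $\B(G)$ (compact and open) — compactness is automatic as a closed subgroup of the compact group $G$, and openness is exactly what "local base in $\B(G)$" supplies — after which the argument is the verbatim reasoning of Corollary~\ref{invbase}.
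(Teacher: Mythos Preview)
Your proof is correct and is essentially the same as the paper's: the paper's proof consists of the single sentence ``It suffices to apply Corollary~\ref{invbase},'' and your argument just unpacks that corollary (full invariance of $\Omega^n(G)$ forces $C_m(\phi,U)=U$, hence $H_{top}(\phi,U)=0$, hence $h_{top}(\phi)=0$ via the local base). You even note this yourself in the last part of the proposal.
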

\begin{proof}
It suffices to apply Corollary~\ref{invbase}.
\end{proof}

An instance of the above lemma, are the groups $\Z_p^n\times F_p$, where $n\in \N$ and $F_p$ is a finite $p$-group.

\medskip
In order to provide another example we need to recall first a family of non-abelian locally compact nilpotent groups of nilpotency class $2$ with significant applications in theoretical physics and Lie theory.  
The \textit{Heisenberg group} on a commutative unitary ring $R$ is the group of all $(3\times 3)$-matrices
$$\mathbb{H}(R)=\{M(a,b;z) \mid a,b,z \in R\},\quad \text{where}\ M(a,b;z)=\begin{pmatrix}1 & a & z \\ 0 & 1 & b \\ 0 & 0 & 1 \end{pmatrix}.$$
The group $\mathbb{H}(R)$ is nilpotent of class $2$, since 
$$Z(\mathbb{H}(R))=\mathbb{H}(R)'=\{M(0,0;z)\mid z\in R\} \cong (R,+)\quad\text{and}\quad\mathbb{H}(R)/Z(\mathbb{H}(R)) \cong (R,+)\times (R,+).$$ 
When $R$ is a topological commutative unitary ring, $\mathbb{H}(R)$ is equipped with the product topology induced by $R^9$.  

\smallskip
One can see that  $\mathbb{H}(\Z_p)$ is a compact $p$-group, while $\mathbb{H}(\Q_p)$ is a locally compact $p$-group; for both these groups $\rank_p$ equals $2$. Here, following \cite{HHR}, we take as a definition of $\rank_p(G)$ of a locally compact group $G$, the equivalent condition mentioned in the introduction, that is, $\rank_p(G)$ is finite if there exists $n\in\N$ such that every topologically finitely generated subgroup of $G$ is generated by at most $n$ elements, and $\mathrm{rank}_p(G)$ is the smallest $n\in\N$ with such property.

\begin{example}
The group $\mathbb H(\Z_p)$ satisfies the hypothesis of Lemma~\ref{Lemma:heis}, hence $\mathbb H(\Z_p)\in\mathfrak E_0$. 
\end{example}

While $\mathbb H(\Z_p)\in\mathfrak E_0$ comes as an application of Lemma~\ref{Lemma:heis}, for $\mathbb H(\Q_p)$ we prove the following result using several versions of the Addition Theorem and specific algebraic properties of the Heisenberg groups.

\begin{theorem}
The group $G= \mathbb H(\Q_p)$ satisfies $G\in\mathfrak E_{<\infty}\setminus\mathfrak E_0$.
\end{theorem}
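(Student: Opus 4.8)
The plan is to exploit the nilpotent structure of $G = \mathbb{H}(\Q_p)$ together with the fact that both its center $Z = Z(G)$ and the quotient $G/Z$ are groups for which the behaviour of the topological entropy is already understood. Recall that $Z = \mathbb{H}(\Q_p)' \cong (\Q_p,+)$ and $G/Z \cong \Q_p \times \Q_p \cong \Q_p^2$. Since $Z$ is fully invariant (it is the commutator subgroup), every $\phi \in \End(G)$ restricts to $\phi\restriction_Z \in \End(\Q_p)$ and induces $\bar\phi_{G/Z} \in \End(\Q_p^2)$. By Theorem~\ref{yuzvinskip} applied to $\Q_p$ and to $\Q_p^2$, both $h_{top}(\phi\restriction_Z)$ and $h_{top}(\bar\phi_{G/Z})$ are finite. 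So the first half, $G \in \mathfrak E_{<\infty}$, will follow at once from the Addition Theorem $AT(G,\phi,Z)$ — that is, from the inequality $h_{top}(\phi) \le h_{top}(\phi\restriction_Z) + h_{top}(\bar\phi_{G/Z})$; the reverse inequality is Lemma~\ref{mon}(a), but only ``$\le$'' is needed here. To get $G \notin \mathfrak E_0$ it suffices to produce a single $\phi \in \End(G)$ with $h_{top}(\phi) > 0$: take the inner-type scaling automorphism $\phi = \phi_t$ given by $M(a,b;z) \mapsto M(t^{-1}a, t^{-1}b, t^{-2}z)$ for a suitable $t$ (e.g. $t=p$), which is a well-defined topological automorphism of $\mathbb{H}(\Q_p)$ since the defining relation $z' - z = ab$ transforms correctly under scaling the variables. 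On $G/Z \cong \Q_p^2$ this induces multiplication by $1/p$, which by Example~\ref{Qpp} has topological entropy $2\log p > 0$; hence by Lemma~\ref{mon}(a), $h_{top}(\phi_p) \ge h_{top}(\bar{(\phi_p)}_{G/Z}) = 2\log p > 0$, so $G \notin \mathfrak E_0$.

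The crux of the argument is therefore establishing $AT(G,\phi,Z)$, or at least the single inequality $h_{top}(\phi) \le h_{top}(\phi\restriction_Z) + h_{top}(\bar\phi_{G/Z})$, for the specific group $G = \mathbb{H}(\Q_p)$. The approach I would take is to work with an explicit compatible local base. For $k \in \Z$ put $U_k = \mathbb{H}(p^k\Z_p) = \{M(a,b;z) \mid a,b,z \in p^k\Z_p\}$; these are compact open subgroups forming a local base of $G$ at the identity (they are not subgroups unless one is careful — in fact $\mathbb{H}(p^k\Z_p)$ is a genuine subgroup since $p^k\Z_p$ is closed under multiplication, so this is fine), and the family $\{U_k\}$ is a base of $\phi$-cofinal neighborhoods in the sense of Lemma~\ref{monotonia}. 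One then analyses the cotrajectory $C_n(\phi,U_k) = U_k \cap \phi^{-1}(U_k) \cap \cdots \cap \phi^{-n+1}(U_k)$. The key point is the short exact sequence $1 \to U_k \cap Z \to U_k \to U_k Z/Z \to 1$ with $U_k \cap Z = p^k\Z_p$ (inside $Z \cong \Q_p$) and $U_k Z/Z \cong (p^k\Z_p)^2$ (inside $G/Z \cong \Q_p^2$), which restricts to a sequence relating $C_n(\phi,U_k)$, $C_n(\phi\restriction_Z, U_k\cap Z)$ and $C_n(\bar\phi_{G/Z}, \overline{U_k})$. Submultiplicativity of the index $[U_k : C_n(\phi,U_k)] \le [U_k\cap Z : C_n(\phi\restriction_Z,U_k\cap Z)] \cdot [\overline{U_k}:C_n(\bar\phi_{G/Z},\overline{U_k})]$ follows because the image of $C_n(\phi,U_k)$ in $G/Z$ is contained in the cotrajectory downstairs while its intersection with $Z$ contains (indeed equals, up to the obvious inclusion) a piece controlled by the cotrajectory in $Z$; taking $\frac1n\log(\cdot)$ and passing to the limit via \eqref{htoptdlc} yields the desired inequality.

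The main obstacle I anticipate is precisely this submultiplicativity step: unlike the abelian product situation of Proposition~\ref{fundamentalstep}, here the extension $1 \to Z \to G \to G/Z \to 1$ is non-split as topological groups near the identity only in a mild way (the $U_k$ do split it: $U_k \cong (p^k\Z_p)^2 \rtimes p^k\Z_p$ is an honest semidirect product, with the ``twist'' landing in the center), so one must track how $\phi$ interacts with the commutator map $(a,b)\mapsto ab$ and verify that the finitely many ``cross terms'' contribute no extra entropy. A clean way around this is to observe that for $\phi \in \End(\mathbb{H}(\Q_p))$ the induced map on $G/Z \cong \Q_p^2$ is $\Q_p$-linear, given by a matrix $A \in M_2(\Q_p)$, and $\phi\restriction_Z$ is multiplication by $\det A$ (since $\phi$ must respect the commutator pairing $Z = G' $, the action on $Z$ is the ``determinant'' of the action on $G/Z$); thus $h_{top}(\phi\restriction_Z)$ and $h_{top}(\bar\phi_{G/Z})$ are both computable from the eigenvalues $\lambda_1,\lambda_2$ of $A$ via Theorem~\ref{yuzvinskip}, and one checks directly on the explicit matrix realization of $\phi$ on $\mathbb{H}(\Q_p) \subseteq M_3(\Q_p)$ that the cotrajectory indices $[U_k:C_n(\phi,U_k)]$ grow like $\prod_{|\lambda_i|_p>1}|\lambda_i|_p^n \cdot |\lambda_1\lambda_2|_p^{[\text{part}]n}$, giving $h_{top}(\phi) = h_{top}(\phi\restriction_Z) + h_{top}(\bar\phi_{G/Z})$ exactly. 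Either route closes the proof; the second also incidentally establishes the full $AT(\mathbb{H}(\Q_p))$ for endomorphisms with $Z$-invariant kernel, which is of independent interest.
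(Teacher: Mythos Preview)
Your argument for $G \notin \mathfrak E_0$ is fine: the map $\phi_p: M(a,b;z) \mapsto M(p^{-1}a, p^{-1}b, p^{-2}z)$ is a topological automorphism, and Lemma~\ref{mon}(a) applied to the normal subgroup $Z=Z(G)$ gives $h_{top}(\phi_p) \ge h_{top}(\overline{(\phi_p)}_{G/Z}) = 2\log p > 0$. The paper uses a different explicit automorphism but the same monotonicity idea.

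The gap is in the $G \in \mathfrak E_{<\infty}$ direction: your index inequality is written backwards. From the short exact sequence $1 \to U_k\cap Z \to U_k \to \overline{U_k} \to 1$ one has the \emph{equality}
\[
[U_k:C_n(\phi,U_k)] \;=\; [\,\overline{U_k}:\pi(C_n(\phi,U_k))\,]\cdot [\,U_k\cap Z : C_n(\phi,U_k)\cap Z\,].
\]
Your two observations give $C_n(\phi,U_k)\cap Z = C_n(\phi\restriction_Z,U_k\cap Z)$ exactly, and $\pi(C_n(\phi,U_k)) \subseteq C_n(\bar\phi_{G/Z},\overline{U_k})$, hence $[\overline{U_k}:\pi(C_n(\phi,U_k))] \ge [\overline{U_k}:C_n(\bar\phi_{G/Z},\overline{U_k})]$. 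Plugging in yields
\[
[U_k:C_n(\phi,U_k)] \;\ge\; [U_k\cap Z : C_n(\phi\restriction_Z,U_k\cap Z)]\cdot [\overline{U_k}:C_n(\bar\phi_{G/Z},\overline{U_k})],
\]
the opposite of what you need. Controlling $[\overline{U_k}:\pi(C_n(\phi,U_k))]$ from \emph{above} --- equivalently, lifting elements of the cotrajectory in $G/Z$ back to $C_n(\phi,U_k)$ --- is exactly the hard direction of any Addition Theorem, and nothing in your outline addresses it. Your fallback (``one checks directly on the explicit matrix realization'') is not a proof either: $\phi$ is not determined by the matrix $A$ of $\bar\phi_{G/Z}$, since one may add an arbitrary continuous additive map $\Q_p^2 \to Z$ in the $z$-coordinate, and you give no argument that this extra data does not affect the growth of the indices.

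The paper circumvents the missing upper bound by a dichotomy on $N=\ker\phi$. If $N=\{1\}$, then $\phi\restriction_Z$ and $\bar\phi_{G/Z}$ are injective continuous endomorphisms of $\Q_p$ and $\Q_p^2$, hence surjective; the Open Mapping Theorem makes $\phi$ a topological automorphism, and then the already-established Addition Theorem for topological automorphisms of totally disconnected locally compact groups \cite{GBV} gives \eqref{AT} with both summands finite by Theorem~\ref{yuzvinskip}. If $N\ne\{1\}$, one shows $Z=G'\subseteq N$ (a non-trivial closed subgroup of $Z\cong\Q_p$ has torsion quotient, while $\phi(Z)\subseteq G$ is torsion-free), so every $\phi^{-n}(U_k)$ contains $G'$ and is therefore normal; Theorem~\ref{LAAAAAst:thm} then gives $h_{top}(\phi)=h_{top}(\bar\phi_{G/N})$, and $G/N$, being a quotient of $G/Z\cong\Q_p^2$, lies in $\mathfrak E_{<\infty}$ by Theorem~\ref{main1}. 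Your determinant remark (that $\phi\restriction_Z$ is multiplication by $\det A$) is correct and is precisely what makes this dichotomy sharp: $N=\{1\}$ iff $\det A\ne 0$.
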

\begin{proof}
To check that $G\not\in\mathfrak E_0$, let $$\phi:G\to G,\ M(a,b;z)\mapsto M(a/p,b;z/p).$$ Then $\phi\in \Aut(G)$, so by the Addition Theorem for topological automorphisms of totally disconnected locally compact groups in \cite{GBV}, 
\begin{equation}\label{AT-}
h_{top}(\phi)= h_{top}(\phi\restriction_{Z(G)}) +   h_{top}(\bar \phi_{G/Z(G)}).
\end{equation} 
Since
\begin{equation}\label{AT--}
Z(G)\cong \Q_p\quad\text{and}\quad G/Z(G) \cong \Q^2_p,
\end{equation} 
and $\phi\restriction_{Z(G)}$ is conjugated to the multiplication by $1/p$ of $\Q_p$, by Lemma~\ref{conju}(b) and Example~\ref{Qpp}
we conclude that $h_{top}(\phi)\geq h_{top}(\phi\restriction_{Z(G)})= \log p >0$.

\smallskip
It remains to verify that $G\in\mathfrak E_{<\infty}$. To this end, fix $\phi\in\End(G)$ and let $N=\ker\phi$; we have to prove that $h_{top}(\phi)<\infty$.

Assume that $N=\{1\}$. We show that $\phi\in\Aut(G)$. Indeed, since $Z(G)$ is fully invariant, $\phi\restriction_{Z(G)}$ is an injective endomorphism of $Z(G) \cong \Q_p$, hence $\phi\restriction_{Z(G)}$ is an automorphism of $Z(G)$. In particular, $\phi^{-1}(Z(G)) = Z(G)$, so the induced endomorphism $\bar\phi_{G/Z(G)} : G/Z(G) \to  G/Z(G)$ is injective as well. 
As $G/Z(G)\cong \Q_p\times \Q_p$, $\bar \phi_{G/Z(G)}$ is an automorphism of $G/Z(G)$. This proves  that $\phi$ is a bijective continuous  endomorphism of $G$. 
Since $\mathbb{H}(\Q_p)$ is locally compact and $\sigma$-compact, $\phi$ is a topological automorphism by the Open Mapping Theorem (see \cite[Theorem 5.29]{HR}). 
In view of \eqref{AT--}, by Lemma~\ref{conju}(b) and by Theorem~\ref{main1}, $h_{top}(\phi\restriction_{Z(G)})<\infty$ and $h_{top}(\bar \phi_{G/Z(G)})< \infty$, 
so we can conclude with \eqref{AT-} that $h_{top}(\phi)<\infty$. 

Now suppose that $N\neq\{1\}$. First we show that $H=N\cap Z(G)\neq\{1\}$ and then the inclusion $Z(G)\subseteq N$. Indeed, if $H=N\subseteq Z(G)$, there is nothing to prove, as  $N\neq\{1\}$. If there exists a non-central element $y\in N$, then there exists $x\in G$ such that the commutator $[x,y]\ne 1$. This implies $H=N\cap G' \ne \{1\}$, as $[x,y]\in N$. 
Then $H$ is a non-trivial closed subgroup of $Z(G)$, hence $Z(G)/H$ is torsion in view of \eqref{AT--}. On the other hand, $Z(G)/H \cong \phi(Z(G)) $ is (isomorphic to) a subgroup of $G$, hence torsion-free as $G$ itself. Consequently $Z(G)/H$ is trivial, that is $H=Z(G)\subseteq N$.

Since $N$ contains $Z(G)=G'$, Theorem \ref{LAAAAAst:thm} is applicable and gives $ h_{top}(\phi) = h_{top}(\bar \phi_{G/N})$. Since $G/N$ is 
isomorphic to a quotient of $G/Z(G)$, we get $G/N\in\mathfrak E_{<\infty}$ by \eqref{AT--} and in view of Theorem~\ref{main1}.
Therefore, $h_{top}(\phi)=h_{top}(\bar \phi_{G/N})<\infty$.
\end{proof}

This theorem motivates the following:

\begin{conjecture}\label{C0nj}
Every nilpotent locally compact $p$-group $G$ with $\rank_p(G)<\infty$ is in $\mathfrak E_{<\infty}$.
\end{conjecture}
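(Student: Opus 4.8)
The plan is to argue by induction on the nilpotency class $c$ of $G$. If $c\le 1$ then $G$ is abelian and the claim is exactly Theorem~\ref{main1}. For $c\ge 2$, I would set $Z=\overline{\gamma_c(G)}$, the closure of the last nontrivial term of the lower central series of $G$. Continuity of the commutator map makes $Z$ a closed central subgroup of $G$, and the identity $\phi([x,y])=[\phi(x),\phi(y)]$ for $\phi\in\End(G)$ makes it fully invariant (unlike $Z(G)$, which need not be invariant under arbitrary endomorphisms). Here $Z$ is abelian, $G/Z$ is nilpotent of class at most $c-1$, and, $\rank_p$ being inherited by closed subgroups and by Hausdorff quotients (see \cite{HHR}), both $Z$ and $G/Z$ have finite $\rank_p$. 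Thus $Z\in\mathfrak E_{<\infty}$ by Theorem~\ref{main1}, and $G/Z\in\mathfrak E_{<\infty}$ by the inductive hypothesis.

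To pass from this to $G\in\mathfrak E_{<\infty}$ one would like to apply Lemma~\ref{Lemma3space}(a), which needs $AT(G,Z)$; in fact the one-sided inequality $h_{top}(\phi)\le h_{top}(\phi\restriction_Z)+h_{top}(\bar\phi_{G/Z})$ for every $\phi\in\End(G)$ would already suffice. This is precisely where the general Addition Theorem for locally compact groups enters, and it is the main obstacle: for non-compact $G$ --- typically when $Z$ or $G/Z$ carries $\Q_p$-summands --- neither $AT(G,Z)$ nor even the one-sided estimate is presently available, and proving the relevant instance is essentially a nilpotent refinement of Question~\ref{ques1}. When $G$ is compact the Addition Theorem does hold (see \cite{Dik+Manolo,Y}), so the induction closes and the compact case of the conjecture follows at once.

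Short of the full Addition Theorem, one can still dispose of sub-cases along the lines of the preceding proof for $\mathbb{H}(\Q_p)$. If $\ker\phi\supseteq [G,G]$, then $\Im\phi$ is abelian, hence $\phi^{-1}(U)$ --- and therefore $\phi^{-n}(U)$ --- is normal in $G$ for every compact open subgroup $U$ and every $n\in\N_+$; Theorem~\ref{LAAAAAst:thm} then yields $h_{top}(\phi)=h_{top}(\bar\phi_{G/\ker\phi})$, and $G/\ker\phi$ is an abelian locally compact $p$-group of finite $\rank_p$, so it lies in $\mathfrak E_{<\infty}$ by Theorem~\ref{main1}. When $\phi$ is a topological automorphism one can invoke the Addition Theorem of \cite{GBV} for automorphisms of totally disconnected locally compact groups, applied to the fully invariant subgroup $Z$, and again reduce to the two abelian (or lower-class) pieces. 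The hard part will be the remaining case --- $\phi$ neither injective-with-abelian-image nor an automorphism --- where, as the Heisenberg argument already suggests, a genuinely new instance of the Addition Theorem for nilpotent locally compact $p$-groups of finite $\rank_p$ seems to be needed; this is the step where I expect essentially all of the difficulty to lie.
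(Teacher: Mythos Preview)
This statement is a \emph{conjecture} in the paper, not a theorem: the paper gives no proof and explicitly leaves it open, remarking only that ``Due to the Addition Theorem for topological automorphisms of totally disconnected locally compact groups from \cite{GBV}, it is not hard to deduce from Theorem~\ref{main1} an affirmative answer of the above conjecture for topological automorphisms.'' So there is no paper proof to compare against.

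Your proposal is not a proof either, and you are candid about this. What you have is a correct diagnosis of the situation, and in fact you go somewhat further than the paper's one-line remark. Your inductive scheme via $Z=\overline{\gamma_c(G)}$ is the natural one (and your observation that $Z(G)$ need not be fully invariant, so the lower central series is the right filtration, is well taken). Your identification of the missing ingredient --- an Addition Theorem $AT(G,Z)$, or even just the subadditive inequality, for nilpotent locally compact $p$-groups of finite $\rank_p$ --- is exactly right, and matches the spirit of Question~\ref{ques1}. The partial cases you isolate are correct: the compact case closes by \cite{Dik+Manolo,Y}; the automorphism case closes by \cite{GBV} (this is precisely the paper's remark); and your treatment of the case $\ker\phi\supseteq[G,G]$ via Theorem~\ref{LAAAAAst:thm} is a genuine addition not mentioned in the paper. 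The residual case you flag --- non-injective $\phi$ whose kernel does not contain $[G,G]$ --- is indeed where the difficulty lies, and neither you nor the paper resolves it.
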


Due to the Addition Theorem for topological automorphisms of totally disconnected locally compact groups from \cite{GBV}, it is not hard to deduce from Theorem~\ref{main1} an affirmative answer of the above conjecture for topological automorphisms. 

\medskip
Since totally disconnected locally compact groups are precisely the zero-dimensional ones, now we impose again finiteness of the dimension, instead of finiteness of the rank.  We conjecture that the following question has a positive answer in the abelian case. This is inspired by the implication in the first part of Theorem~\ref{origin}(a).

\begin{question}\label{ques:conn}
If $G$ is a finite-dimensional connected locally compact group, can we assert that $G \in\mathfrak E_{<\infty}$?   
\end{question}

According to \cite[Corollary 16]{B}, the answer is affirmative for Lie groups. On the other hand, it is affirmative also for compact groups. Indeed, the second part of Theorem~\ref{origin}(a) (this is \cite[Theorem A]{DS}) works also for classes of compact-like groups other than locally compact (e.g., $\omega$-bounded, or simply pseudocompact, etc.). In particular, $G\in \mathfrak E_{<\infty}$ if $G$ is a finite-dimensional connected pseudocompact group by \cite[Proposition 4.4]{DS}. 
So, one can try to further push the study of the class $\mathfrak E_{<\infty}$ in the framework that simultaneously generalizes both locally compact and pseudocompact groups, namely, that of locally pseudocompact groups.

\bibliographystyle{plain}

\begin{thebibliography}{00}

\bibitem{AKM} R. L. Adler, A. G. Konheim, M. H. McAndrew, \emph{Topological entropy}, Trans. Amer. Math. Soc. 114 (1965) 309--319.

\bibitem{Dik+Manolo} D. Alcaraz, D. Dikranjan, M. Sanchis, \emph{Bowen's entropy for endomorphisms of totally bounded abelian groups}, Descriptive Topology and Functional Analysis, Springer Proceedings in Mathematics \& Statistics, Volume 80 (2014) 143--162.

\bibitem{UdoW} U. Baumgartner, G. A. Willis, \emph{Contraction groups and scales of automorphisms of totally disconnected locally compact groups}, Israel J. Math. 142 (2004) 221--248.

\bibitem{BDG} F. Berlai, D. Dikranjan, A. Giordano Bruno, \emph{Scale function vs Topological entropy}, Topology Appl. 160 (2013) 2314--2334.

\bibitem{B} R. Bowen, \emph{Entropy for group endomorphisms and homogeneous spaces}, Trans. Amer. Math. Soc. 153 (1971) 401--414.

\bibitem{Braconnier} J. Braconnier, \emph{Sur les groupes topologiques localement compacts}, J. Math. Pures Appl. (9) 27 (1948) 1--85.

\bibitem{DG-islam} D. Dikranjan, A. Giordano Bruno, \emph{Topological entropy and algebraic entropy for group endomorphisms}, Proceedings ICTA2011 Islamabad, Pakistan July 4-10 2011 Cambridge Scientific Publishers (2012) 133--214.


\bibitem{DG-BT} D. Dikranjan, A. Giordano Bruno, \emph{The Bridge Theorem for totally disconnected LCA groups}, Topology and its Applications 169 (2014) 21--32.

\bibitem{DG_Adv} D. Dikranjan, A. Giordano Bruno, \emph{Entropy on abelian groups}, Advances in Mathematics 298 (2016) 612--653.


\bibitem{DGSZ} D. Dikranjan, B. Goldsmith, L. Salce, P. Zanardo, \emph{Algebraic entropy of endomorphisms of abelian groups}, Trans. Amer. Math. Soc. {361} (2009) 3401--3434.

\bibitem{DPS} D. Dikranjan, Iv. Prodanov, L. Stoyanov, \textit{Topological Groups: Character, Dualities and Minimal Group Topologies}, Pure and Applied Mathematics, Vol. \textbf{130}, Marcel Dekker Inc., New York-Basel, 1989.

\bibitem{DS} D. Dikranjan, M. Sanchis, \emph{Dimension and entropy in compact topological groups}, Journal of Mathematical Analysis and its Applications 476, Issue 2, 15 (2019) 337--366. 

\bibitem{DSV} D. Dikranjan, M. Sanchis, S. Virili, \emph{New and old facts about entropy in uniform spaces and topological groups}, Topology Appl. 159 (2012) 1916--1942. 


\bibitem{GBV0} A. Giordano Bruno, S. Virili, \emph{Algebraic Yuzinvinski Formula}, Journal of Algebra 423 (2015) 114--147.

\bibitem{GBV1} A. Giordano Bruno, S. Virili, \emph{On the Algebraic Yuzvinski Formula}, Topololgical Algebra and its Applications 3 (2015) 86--103.

\bibitem{GBV} A. Giordano Bruno, S. Virili, \emph{Topological entropy in totally disconnected locally compact groups}, Ergodic Theory and Dynamical Systems 37 (2017) 2163--2186.

\bibitem{Gl} H. Gl\"ockner, \emph{Scale functions on $p$-adic Lie groups}, Manuscripta Math. 97 (1998) 205--215.


\bibitem{HHR} W. Herfort, K. H. Hofmann, F. G. Russo, \emph{Periodic locally compact groups}, Studies in Mathemtics 71, De Gruyter, 2019.

\bibitem{HR}  E. Hewitt, K. A. Ross, \emph{Abstract harmonic analysis I}, Springer-Verlag, Berlin-Heidelberg-New York, 1963.

\bibitem{hofmor} K. H. Hofmann, S. Morris, \emph{The Structure of Compact Groups}, de Gruyter, Berlin, 2006.

\bibitem{hood} B. M. Hood,  \emph{Topological entropy and uniform spaces}, J. London Math. Soc. 8 (2) (1974) 633--641. 

\bibitem{lg} C. R. Leedham-Green, S. McKay, \emph{The structure of groups of prime power order}, LMSM 27, Oxford University Press, Oxford, 2002. 

\bibitem{Lehmer} D. H. Lehmer, \emph{Factorization of certain cyclotomic functions}, Ann. Math. 34 (1933) 461--479. 


\bibitem{LW} D. Lind, T. Ward, \emph{Automorphisms of solenoids of $p$-adic entropy}, Ergod. Th. Dynam. Sys. 8 (1988) 411--419.

\bibitem{P} J. Peters, \emph{Entropy of automorphisms on {L}.{C}.{A}. groups}, Pacific J. Math. 96 (2) (1981) 475--488. 


\bibitem{Sch} K. Schmidt, \emph{Dynamical systems of algebraic origin}, Progr. Math. 128, Birkh\"auser Zentralblatt Verlag, Basel, 1995.


\bibitem{vD} D. van Dantzig, \emph{Studien over topologische Algebra}, Dissertation, Amsterdam 1931.

\bibitem{V} S. Virili, \emph{Entropy for endomorphisms of LCA groups}, Topology Appl. 159 (9) (2012) 2546--2556.


\bibitem{Walters}  P. Walters, \textit{An Introduction to Ergodic Theory}, 1969, Springer, Berlin. 

\bibitem{Willis} G. A. {Willis}, \emph{The scale and tidy subgroups for endomorphisms of totally disconnected locally compact groups}, Math. Ann. 361 (2015) 403--442.
  
\bibitem{Y} S. Yuzvinski, \emph{Metric properties of endomorphisms of compact groups}, Izv. Acad. Nauk SSSR, Ser. Mat. {29} (1965) 1295--1328 (in Russian). English Translation: Amer. Math. Soc. Transl. (2) {66} (1968) 63--98.


\end{thebibliography}

\end{document}